\documentclass[12pt]{article}

\usepackage[letterpaper,margin=1in]{geometry}
\usepackage{amsmath,amssymb,mathtools}

\usepackage[amsmath,amsthm,thmmarks,hyperref]{ntheorem}

\usepackage{etoolbox}
\AtBeginEnvironment{proof}{%
  \theorempreskipamount=6pt plus6pt\relax
  \theorempostskipamount=6pt plus6pt\relax}
\makeatletter
\apptocmd{\@endtheorem}{\@endpefalse}{}{}

\patchcmd{\SetTagPlusEndMark}
  {\llap{\hss\PotEndMark{\raisebox{-1.3em}}}}
  {\PotEndMark{\quad}}
  {}{\PackageError{proofmarks}{Cannot configure automatic proof marks}{}}
\makeatother

\usepackage{booktabs,tabularx,array,dcolumn}
\usepackage{graphicx}
\usepackage{float}
\usepackage{flafter}

\makeatletter
\let\ftype@table\ftype@figure
\makeatother
\usepackage[labelsep=period]{caption}
\newlength{\reconstructioncaptionwidth}
\usepackage{placeins}
\usepackage{enumitem}
\usepackage{microtype}
\usepackage{xcolor}
\usepackage[hidelinks]{hyperref}

\numberwithin{equation}{section}
\allowdisplaybreaks
\newtheorem{theorem}{Theorem}[section]
\newtheorem{proposition}[theorem]{Proposition}
\newtheorem{lemma}[theorem]{Lemma}
\newtheorem{corollary}[theorem]{Corollary}

\theoremstyle{definition}

\newtheorem{assumption}[theorem]{Assumption}

\newtheorem{remark}[theorem]{Remark}

\setlist[enumerate]{leftmargin=2.2em,itemsep=0.45em,topsep=0.35em}
\setlist[itemize]{leftmargin=2em,itemsep=0.25em,topsep=0.3em}
\newcolumntype{Y}{>{\raggedright\arraybackslash}X}

\newcommand{\R}{\mathbb{R}}
\newcommand{\A}{\mathcal{A}}
\newcommand{\J}{\mathcal{J}}
\newcommand{\Tr}{\operatorname{Tr}}
\newcommand{\diver}{\operatorname{div}}
\newcommand{\clip}{\operatorname{clip}}

\title{Neural Stream Function Method for Electrical Impedance Tomography
with Lipschitz Conductivities}

\author{
Xiaosheng Li \\[0.5em]
{\normalsize Department of Mathematics and Statistics}\\
{\normalsize Florida International University}\\
{\normalsize Miami, FL 33199, U.S.A.}\\
{\normalsize Email: \texttt{xli@fiu.edu}}
}

\date{}

\begin{document}

\maketitle

\begin{abstract}
Electrical impedance tomography (EIT) reconstructs the conductivity inside
an object from voltage and current measurements on its boundary. The
resulting inverse problem is nonlinear and severely ill-posed. Neural
networks offer a flexible computational approach, but training physics
informed neural networks for EIT and relating their training loss to
reconstruction accuracy remain challenging. We develop a novel
constrained neural stream function method for planar EIT with Lipschitz conductivities.
The method builds the conductivity constraints and prescribed boundary
voltages into the neural trial functions and replaces the second order
conductivity equation with a first order system in strong form. The
measured current is also used to construct a fixed baseline for the stream
function. For this formulation, we derive error estimates that relate the sampled neural 
objective to errors in the physical boundary currents and the reconstructed conductivity.
The error estimates separate the contributions from neural
approximation, measurement noise, and sampling. 
Numerical experiments further demonstrate that the proposed
method achieves high reconstruction accuracy with both exact and noisy
boundary data.
\end{abstract}

\noindent\textbf{Keywords.}
Electrical impedance tomography; physics informed neural networks;
stream function; hard constraints; error analysis.

\section{Introduction}
\label{sec:introduction}

Electrical impedance tomography (EIT) recovers the electrical conductivity
inside an object from voltage and current measurements on its boundary.
This noninvasive technique has a wide range of applications, such as
breast cancer and pneumothorax detection, lung function monitoring,
industrial process monitoring, and leak detection in buried pipes and
underground storage tanks
\cite{CostaEtAl2008,IsaacsonEtAl1990,JordanaEtAl2001,RamirezEtAl1996,Wang2015,ZouGuo2003}. The conductivity reconstruction is a nonlinear and
severely ill-posed inverse boundary value problem. Since Calder\'on's
original work \cite{Calderon1980}, its uniqueness and stability have
been studied extensively
\cite{Alessandrini1988,AlessandriniVessella2005,AstalaPaivarinta2006,BarceloFaracoRuiz2007,Mandache2001,Nachman1996,SylvesterUhlmann1987,Uhlmann2009}. In computations, only finitely
many boundary measurements are available, and the data may contain noise.
A variety of reconstruction methods have been developed to address these
difficulties, including direct reconstruction
\cite{ChowItoZou2014,KirschGrinberg2008,KnudsenEtAl2009,MuellerSiltanen2020},
regularized iteration methods
\cite{Borcea2002,CheneyEtAl1990,Dobson1992,Hanke1997,JinMaassSparsity2012,LechleiterRieder2006,LiWang2025},
variational methods
\cite{BorceaGrayZhang2003,KohnVogelius1987}, and statistical methods
\cite{KaipioSomersalo2005}.

In recent years, deep learning methods for EIT reconstruction have
received considerable attention. Many of these methods rely on precomputed
training data. Supervised methods learn a map from simulated boundary
measurements to conductivity images
\cite{AdlerGuardo1994,ChenEtAl2020,FanYing2020,LiEtAl2017,SeoEtAl2019}.
Other approaches combine learned components with D-bar reconstruction,
Calder\'on's method, direct sampling, or iterative reconstruction methods
\cite{BhatChenWang2026,CenJinShinZhou2023,ChenEtAl2023,ColibazziEtAl2023,GuoJiang2021,HamiltonEtAl2019,HerzbergEtAl2021}. Neural operators have also been used
to approximate the continuum EIT inverse map
\cite{deHoopEtAl2025,SchotlandTitiWang2026}. We refer to
\cite{CebrianCarcavillaMeribout2025,DenkerEtAl2025} and the references therein 
for some reviews of neural EIT methods. 
Once trained, these models can provide fast reconstructions. Their accuracy, however, may decrease when the conductivity being reconstructed is not well represented in the training data. Generating a representative training dataset and training the models also require upfront computational effort.

Some other approaches use iterative reconstruction methods that represent
only the unknown conductivity by a neural network and still rely on the
finite element method (FEM) to solve the forward problem
\cite{LiuEtAl2023,ParkEtAl2026}. At each iteration, the network parameters
are updated using the discrepancy between the measured boundary data and
those computed from the conductivity predicted by the network. These
approaches therefore still require repeated FEM forward solves during
training.

The physics informed neural network (PINN) approach has emerged as a
powerful and flexible framework for solving partial differential equations
\cite{RaissiEtAl2019}. It approximates the unknown fields with neural
networks and trains these approximations
using the governing equations, boundary conditions, and available
observations. Several EIT formulations of this type use interior
information or a separately trained model. The EIT computations in
\cite{BarSochen2021} mainly address the setting in which the interior
potentials are observed. The approach in \cite{PokkunuruEtAl2023}
considers the same setting and uses a prior based on an energy model
trained on an additional data set. For the full inverse EIT problem,
the method in \cite{YangEtAl2026} uses a pretrained convolutional neural
network to estimate the interior potentials from boundary measurements
and then uses a PINN to reconstruct the conductivity.

These considerations motivate fully neural, physics informed approaches 
to inverse EIT, in which both the conductivity and the interior potentials are 
represented by neural networks. The networks are trained using only
the boundary measurements and the governing equation, without additional
training data or reliance on conventional forward solvers.
The method in \cite{KaloutEtAl2026} uses the strong form of the conductivity
equation with soft boundary constraints. Its numerical experiments employ
many boundary excitations and investigate Fourier features and multiscale
boundary inputs. However, the strong form residual contains second
derivatives of the potential and first derivatives of the conductivity. The continuous
least squares loss based on this residual leads to a fourth order
Euler--Lagrange optimality system \cite{CowenBreenEtAl2024}. By contrast, the weak formulation developed
earlier in \cite{BaoEtAl2020} for coefficient inverse problems, including
EIT, avoids second order strong residuals and thereby reduces the order of spatial derivatives required in the loss. 
This reduction comes with a different training structure involving an
additional neural network for the test function and an adversarial
minimax optimization problem.

In this paper, we study two dimensional EIT with Lipschitz conductivities.
We use a scalar stream representation of the electric current to reduce
the conductivity equation to a first order constitutive relation
in strong form. We formulate reconstruction as a direct least squares
minimization over neural trial functions that enforce the conductivity
constraints and prescribed boundary voltages exactly, without introducing
a test network or a minimax problem. The current literature on physics 
informed neural EIT mainly assesses reconstruction accuracy through 
numerical experiments. For our constrained formulation, we establish a rigorous reconstruction error analysis that relates the sampled training loss to reconstruction accuracy. The resulting error estimates account for the contributions from neural approximation, measurement noise, and sampling. Numerical experiments
further demonstrate that the proposed method achieves high reconstruction
accuracy with both exact and noisy boundary data.

The stream representation satisfies current conservation identically. The constitutive residual
contains only first derivatives of the potential and stream function and
no derivative of the conductivity. The conductivity and interior fields
are reconstructed directly from finitely many boundary measurements,
without additional training data or a forward solver during reconstruction.
We impose the conductivity range, its known boundary trace, and the applied
voltages exactly through the neural trial functions. These constraints
hold at every training step and do not require separate terms in the loss
function. It is known that enforcing boundary conditions exactly can improve the
accuracy of physics informed neural solutions
\cite{LuEtAl2021,SukumarSrivastava2022}.
The measured current specifies the tangential derivative of the stream function
on the boundary, and its primitive determines the stream trace up to a
constant. We lift this primitive into the domain to construct a fixed
baseline for the stream network. The stream trace is not imposed as an
exact boundary constraint and can vary during training through the neural
correction. The noisy current is therefore used in both the baseline and
the loss, where its consistency with the reconstructed fields is measured.

To analyze this constrained formulation, we first prove that the neural loss controls the physical boundary current error.
We then prove density results for the constrained neural trial functions representing the conductivity, potentials, and stream functions, showing that the exact constraints preserve the required approximation properties. 
Combining these results with the consistency of the sampled objective gives error estimates with separate contributions from neural approximation, measurement noise, and sampling. 
The boundary current error estimate also yields a conductivity reconstruction estimate under an inverse stability condition for the prescribed finite measurements.

The paper is organized as follows. In Section~\ref{sec:continuous}, we
state the inverse problem and introduce the stream formulation.
Section~\ref{sec:stream-formulation} describes the boundary measurements,
neural trial functions, and minimization problem. We provide the convergence
analysis in Section~\ref{sec:stream-analysis}. Some numerical examples are
presented in Section~\ref{sec:stream-numerics}.
Section~\ref{sec:conclusion} gives concluding remarks and directions for future work.
The appendices contain the technical details.

\section{The inverse problem and stream formulation}
\label{sec:continuous}

In this section, we first state the forward conductivity problem and specify the
admissible conductivities. We then derive the stream function formulation
and define the inverse problem for finitely many boundary measurements.

Let $\Omega\subset\R^2$ be a bounded and simply connected domain. We assume that
$\Omega$ has a $C^{1,1}$ boundary or is a convex curvilinear polygon with
$C^{1,1}$ open edges.
We write $x=(x_1,x_2)$ for points in $\R^2$.
For a positive and bounded conductivity $\sigma$, we consider the electric
potential $u$ satisfying the Dirichlet problem
\begin{equation}
\begin{cases}
\diver(\sigma\nabla u)=0 & \text{in }\Omega,\\
u=f & \text{on }\partial\Omega.
\end{cases}
\label{eq:forward-problem}
\end{equation}
By the Lax--Milgram theorem, this problem has a unique weak solution in
$H^1(\Omega)$ for each $f\in H^{1/2}(\partial\Omega)$.
The boundary measurement corresponding to the applied voltage is the
outgoing current flux. We define the Dirichlet-to-Neumann map from
$H^{1/2}(\partial\Omega)$ to $H^{-1/2}(\partial\Omega)$ in the
standard weak sense by
\begin{equation*}
\Lambda_\sigma f
=\left.\sigma\,\partial_n u_\sigma(f)\right|_{\partial\Omega},
\end{equation*}
where $n=(n_1,n_2)$ is the unit outer normal and $u_\sigma(f)$ is the solution of
\eqref{eq:forward-problem}. The inverse problem is to recover $\sigma$
from $\Lambda_\sigma$. We assume that $\sigma$ is known on the
boundary and reconstruct it in the interior of $\Omega$.

Evaluating the $H^{-1/2}(\partial\Omega)$ norm of the boundary current is
inconvenient numerically. We therefore work with the boundary current as an
$L^2(\partial\Omega)$ function and impose additional regularity on the
conductivity and boundary voltage. We take the conductivity to be Lipschitz continuous
and define the admissible class by
\begin{equation*}
\A=\left\{\sigma\in C^{0,1}(\overline\Omega):
c_1\leq\sigma\leq c_2,\quad
\sigma|_{\partial\Omega}=\sigma_b\right\}
\end{equation*}
where $c_1$ and $c_2$ are fixed constants with $0<c_1\leq c_2$.
The known boundary value $\sigma_b\in C^{0,1}(\partial\Omega)$
satisfies $c_1\leq\sigma_b\leq c_2$.

We use a common notation for the boundary spaces on the two domain classes.
If $\partial\Omega$ is $C^{1,1}$, we set
$\mathcal{H}^s(\partial\Omega)=H^s(\partial\Omega)$. If $\Omega$ is a
convex curvilinear polygon with open edges
$\Gamma_1,\ldots,\Gamma_{N_\Gamma}$, we define
\[
\mathcal{H}^s(\partial\Omega)
:=\left\{v:
v|_{\Gamma_j}\in H^s(\Gamma_j),\ j=1,\ldots,N_\Gamma\right\},
\qquad
\|v\|_{\mathcal{H}^s(\partial\Omega)}^2
:=\sum_{j=1}^{N_\Gamma}\|v\|_{H^s(\Gamma_j)}^2.
\]
In the polygonal case, boundary normal and tangential derivatives are
understood edgewise on the open edges. We also take
$f\in \mathcal{H}^{3/2}(\partial\Omega)\cap C(\partial\Omega)$.
For a $C^{1,1}$ boundary,
$\mathcal{H}^{3/2}(\partial\Omega)=H^{3/2}(\partial\Omega)
\hookrightarrow C(\partial\Omega)$, and therefore
\[
\mathcal{H}^{3/2}(\partial\Omega)\cap C(\partial\Omega)
=H^{3/2}(\partial\Omega)
=\mathcal{H}^{3/2}(\partial\Omega).
\]
For a convex curvilinear polygon, continuity ensures compatibility of the
boundary values at the corners.
By elliptic regularity on the two domain classes
\cite{GilbargTrudinger2001,Grisvard1985}, we have
$u_\sigma(f)\in H^2(\Omega)$. In particular,
\begin{equation*}
\sigma\, \partial_n u_\sigma(f)
\in \mathcal{H}^{1/2}(\partial\Omega)
\hookrightarrow L^{2}(\partial\Omega).
\end{equation*}

The following proposition gives an equivalent first order formulation
of the conductivity equation in terms of a scalar stream function.
\begin{proposition}
\label{prop:stream-equivalence}
Let $\sigma\in \A$ and
$u\in H^2(\Omega)$.  Then
\begin{equation}
\diver(\sigma\nabla u)=0
\label{eq:conductivity-equation}
\end{equation}
if and only if there is a function $\psi\in H^2(\Omega)$ such that
\begin{equation}
\nabla^\perp\psi=\sigma\nabla u,
\quad\mbox{where}\quad
\nabla^\perp\psi=(\partial_{x_2}\psi,-\partial_{x_1}\psi)^T.
\label{eq:stream-relation}
\end{equation}
Such a stream function $\psi$ is uniquely determined up to an additive constant.
\end{proposition}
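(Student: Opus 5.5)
The reverse implication is the easy direction, so I would do it first. Suppose $\psi\in H^2(\Omega)$ satisfies \eqref{eq:stream-relation}. Then $\diver(\sigma\nabla u)=\diver\nabla^\perp\psi=\partial_{x_1}\partial_{x_2}\psi-\partial_{x_2}\partial_{x_1}\psi$. This vanishes a.e. because weak second derivatives of an $H^2$ function commute. Equivalently, for any $\varphi\in C_c^\infty(\Omega)$ one has $\int_\Omega\nabla^\perp\psi\cdot\nabla\varphi\,dx=\int_\Omega\nabla\psi\cdot(-\nabla^\perp\varphi)\,dx=0$, by integrating by parts once and using $\diver\nabla^\perp\varphi=0$. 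Hence \eqref{eq:conductivity-equation} holds, both weakly and a.e.

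For the forward direction, set $F=\sigma\nabla u$. Since $\sigma$ is Lipschitz and $u\in H^2$, the product rule gives $F\in H^1(\Omega;\R^2)$: indeed $\partial_j(\sigma\partial_i u)=\partial_j\sigma\,\partial_i u+\sigma\,\partial_{ij}u$ with $\partial_j\sigma\in L^\infty$. By hypothesis $\diver F=0$. I would then invoke the standard fact that on a bounded simply connected planar Lipschitz domain, a divergence-free $L^2$ field is $\nabla^\perp\psi$ for some $\psi\in H^1(\Omega)$, unique up to a constant (Girault--Raviart, Thm.~I.3.1).

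To keep the step self-contained, I would give a direct argument. Consider the rotated field $G=(-F_2,F_1)$. Then $\operatorname{curl}G=\partial_1F_1+\partial_2F_2=0$. The relation $\nabla^\perp\psi=F$ means $\partial_2\psi=F_1$ and $-\partial_1\psi=F_2$, i.e. $\nabla\psi=G$. A curl-free $L^2$ field on a simply connected Lipschitz domain is a gradient; one can see this by mollifying on an exhaustion by smooth simply connected subdomains, or by using the de Rham/Poincaré lemma in $H^{-1}$ (Ne\v{c}as). This produces $\psi\in H^1$ with $\nabla\psi=G\in H^1$, so $\psi\in H^2(\Omega)$.

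Uniqueness follows because two such $\psi$ have difference $\chi$ with $\nabla\chi=0$ on the connected set $\Omega$, so $\chi$ is constant.

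The main obstacle is the potential-existence step: making the "curl-free implies gradient" statement rigorous at Sobolev regularity on the nonsmooth domain classes allowed, namely $C^{1,1}$ or convex curvilinear polygons. I would first simply cite the Girault--Raviart/Ne\v{c}as result, since both domain classes are Lipschitz and simply connected. As a fallback, I would define $\psi(x)=\int_{\gamma}G\cdot d\ell$ for smooth approximations $G_\varepsilon$ on subdomains and pass to the limit using a uniform Poincaré inequality.
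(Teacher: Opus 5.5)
Your proof is correct and follows essentially the same route as the paper. For the forward direction, you note that $\sigma\nabla u\in H^1(\Omega;\R^2)$ is divergence free and invoke the planar stream function theorem on a simply connected domain, and for the converse you use $\diver(\nabla^\perp\psi)=0$ a.e.; you simply make explicit steps the paper leaves implicit (the product rule for Lipschitz $\sigma$, upgrading $\psi\in H^1$ to $H^2$ via $\nabla\psi\in H^1$, and uniqueness from connectedness of $\Omega$).
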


\begin{proof}
If \eqref{eq:conductivity-equation} holds, then
$\sigma\nabla u\in H^1(\Omega;\R^2)$ is divergence free.
The planar potential theorem on a simply connected domain gives
$\sigma\nabla u=\nabla^\perp\psi$ for some $\psi\in H^2(\Omega)$,
unique up to an additive constant. Conversely,
$\diver(\nabla^\perp\psi)=0$ almost everywhere in $\Omega$,
which proves \eqref{eq:conductivity-equation}.
\end{proof}

The proposition gives a stream function in $H^2(\Omega)$.
Since $H^2(\Omega)\hookrightarrow C(\overline{\Omega})$, its point
values are well defined. Adding a constant to $\psi$ leaves the current
$\nabla^\perp\psi$ unchanged. We fix this constant by choosing a
reference point $x_\star\in\partial\Omega$ and requiring the stream
function to vanish at $x_\star$. For $\sigma\in\A$ and
$f\in \mathcal{H}^{3/2}(\partial\Omega)\cap C(\partial\Omega)$,
we denote by $\psi_\sigma(f)\in H^2(\Omega)$ the unique function satisfying
\begin{equation}
\nabla^\perp\psi_\sigma(f)
=\sigma\nabla u_\sigma(f),
\qquad
\psi_\sigma(f)(x_\star)=0.
\label{eq:forward-stream}
\end{equation}
We call $u_\sigma(f)$ and $\psi_\sigma(f)$ the exact forward fields associated with the given conductivity $\sigma$ and boundary voltage $f$. An exact forward tuple consists of an admissible conductivity and its corresponding fields for the prescribed boundary voltages.

The stream formulation enforces current conservation identically.
Its constitutive equation \eqref{eq:stream-relation} involves first
derivatives of the potential and stream function, but no derivative of
the conductivity. It also relates the stream trace directly to the
boundary current. In fact, letting $\tau=(-n_2,n_1)^T$ be the positively
oriented unit tangent, we then have
\begin{equation}
\partial_\tau\psi_\sigma(f)
=(\nabla^\perp\psi_\sigma(f))\cdot n
=\sigma_b\,\partial_n u_\sigma(f)
=\Lambda_\sigma f.
\label{eq:forward-current}
\end{equation}
Thus, fixing the value at $x_\star$ determines the stream trace by
integration of the current along $\partial\Omega$. Let
$L=|\partial\Omega|$ and parameterize the boundary by arc length
$s\in[0,L]$, starting at $x_\star$ in the positive orientation.
Equations~\eqref{eq:forward-stream} and \eqref{eq:forward-current} give
\begin{equation}
(\Tr\psi_\sigma(f))(s)
=
\int_0^s (\Lambda_\sigma f)(r)\,dr,
\qquad 0\leq s\leq L,
\label{eq:boundary-stream-primitive}
\end{equation}
where $Tr$ denotes the trace operator.
At $s=L$, the boundary returns to $x_\star$. We therefore obtain the
compatibility condition
\begin{equation}
\int_{\partial\Omega}\Lambda_\sigma f\,ds=0,
\label{eq:current-compatibility}
\end{equation}
which also expresses conservation of current.

We now restrict the inverse problem to finitely many measurements.
Let $M$ be the number of boundary experiments and let
$f_i\in \mathcal{H}^{3/2}(\partial\Omega)\cap C(\partial\Omega)$,
$i=1,\ldots,M$, be the applied boundary voltages. Fix a true conductivity
$\sigma^{\mathrm{true}}\in\A$ and let
\begin{equation*}
g_i=\Lambda_{\sigma^{\mathrm{true}}}f_i
=\partial_\tau\psi_{\sigma^{\mathrm{true}}}(f_i),
\qquad i=1,\ldots,M,
\end{equation*}
be the exact currents. The preceding regularity result gives
$g_i\in \mathcal{H}^{1/2}(\partial\Omega)$. We seek to recover
$\sigma^{\mathrm{true}}$ from the voltage and current pairs
$\{(f_i,g_i)\}_{i=1}^M$.

\section{The neural minimization problem}
\label{sec:stream-formulation}

In this section, we formulate a constrained neural minimization problem
for EIT with finite boundary measurements. We use neural networks to represent
the conductivity, potentials, and stream functions. The trial functions
satisfy the prescribed constraints exactly. The loss measures the
constitutive residual and the discrepancy between the measured boundary
currents and those computed from the neural stream functions.

Using the stream function introduced in Section~\ref{sec:continuous}, we
construct the neural formulation based on the following first order system.
\begin{equation*}
\begin{aligned}
&\sigma\in\A,\qquad
\nabla^\perp\psi_i=\sigma\nabla u_i
\quad\text{in }\Omega,\\
&\Tr u_i=f_i,\quad \partial_\tau\psi_i=g_i
\quad\text{on }\partial\Omega,\qquad
\psi_i(x_\star)=0,\quad i=1,\ldots,M.
\end{aligned}
\end{equation*}
We also consider the measured currents $g_i^\delta\in\mathcal{H}^{1/2}(\partial\Omega)$, 
which may contain noise.

\subsection{Neural representation of the unknown fields}
\label{sec:neural-architectures}

One conductivity network with a single output is shared by all experiments.
For each experiment, a separate field network with two outputs is used to
construct the potential and stream function. All networks take the spatial
coordinates $x\in\overline\Omega\subset\mathbb R^2$ as input.

Set $m_0=2$. Let
$m_1,\ldots,m_{L-1}$ denote the hidden layer widths and $m_L$ the
output dimension. We define a fully connected network with $L-1$ hidden
layers from $\overline\Omega$ to $\mathbb R^{m_L}$ by
\begin{equation*}
\begin{aligned}
h_0(x)&=x,\\
h_\ell(x)&=\varphi_{\mathrm{act}}
\bigl(W_\ell h_{\ell-1}(x)+b_\ell\bigr),
&&\ell=1,\ldots,L-1,\\
h_{L}(x)&=W_Lh_{L-1}(x)+b_L.
\end{aligned}
\end{equation*}
Here
$W_\ell\in\mathbb R^{m_\ell\times m_{\ell-1}}$ and
$b_\ell\in\mathbb R^{m_\ell}$ ($\ell=1,\ldots,L$) are the weights and biases,
and the activation $\varphi_{\mathrm{act}}$ is applied componentwise.
For the conductivity and the fields, we write
\begin{equation}
\begin{aligned}
N_\sigma^\vartheta&:\overline\Omega\to\mathbb R,\\
\Phi_i^{\theta_i}&:\overline\Omega\to\mathbb R^2,
&\Phi_i^{\theta_i}(x)
&=\bigl(U_i^{\theta_i}(x),P_i^{\theta_i}(x)\bigr),
&&i=1,\ldots,M.
\end{aligned}
\label{eq:neural-unknowns}
\end{equation}
The vector $\vartheta$ contains the conductivity network parameters.
The vector $\theta_i$ contains the field network parameters for experiment $i$. The outputs
$U_i^{\theta_i}$ and $P_i^{\theta_i}$ are used to construct the potential and
stream function, respectively. The conductivity and field
networks need not have the same depth or width. Once the architectures are
fixed, we collect their parameters in
\begin{equation*}
\Theta=(\vartheta,\theta_1,\ldots,\theta_M)
\in\mathbb R^{N_{\mathrm{par}}},
\end{equation*}
where $N_{\mathrm{par}}$ is the total number of scalar weights and biases.

The raw network outputs in \eqref{eq:neural-unknowns} are unconstrained. In the next
subsection, we use them to construct trial functions with the required
physical and boundary conditions. To ensure the regularity needed for the trial functions and establish the 
approximation results in Section~\ref{sec:neural-approximation}, we assume
\begin{equation}
\varphi_{\mathrm{act}}\in C^2(\mathbb R),
\qquad
\varphi_{\mathrm{act}}\ \text{is bounded and nonconstant}.
\label{eq:activation-assumption}
\end{equation}
We use the $\tanh$ activation in the numerical experiments.

\subsection{Trial functions with exact constraints}
\label{sec:neural-hard-constraints}

We next construct the trial functions. To impose the conductivity bounds,
we introduce the clipping map
\begin{equation*}
\clip_{[c_1,c_2]}(t)=\max\{c_1,\min\{t,c_2\}\}.
\end{equation*}
The clipping map is Lipschitz.
To impose the boundary values, we choose a boundary factor function satisfying
\begin{equation}
\rho_\Omega=0\ \text{on }\partial\Omega,
\qquad
\rho_\Omega>0\ \text{in }\Omega, \qquad \rho_\Omega\in C^{1,1}(\overline\Omega).
\label{eq:boundary-factor-general}
\end{equation}
These properties allow us to enforce the boundary constraints and prove
the density result in the next section. A construction of the
boundary factor is given in Appendix~\ref{app:boundary-factors}.

We begin with the admissible conductivity class $\mathcal A$.
Choose a fixed extension $A_b\in C^{0,1}(\overline\Omega)$ of the known
boundary conductivity such that $A_b|_{\partial\Omega}=\sigma_b$.
We define the conductivity trial function by
\begin{equation}
\sigma^\vartheta(x)
=\clip_{[c_1,c_2]}\!\left(A_b(x)+\rho_\Omega(x)N_\sigma^\vartheta(x)\right).
\label{eq:conductivity-hard-map}
\end{equation}
For every $\vartheta$, the clipping map gives $c_1\leq\sigma^\vartheta\leq c_2$,
and the vanishing boundary factor gives $\sigma^\vartheta|_{\partial\Omega}=\sigma_b$.
The function $A_b(x)+\rho_\Omega(x)N_\sigma^\vartheta(x)$ is Lipschitz.
Its composition with the clipping map is therefore Lipschitz as well, so
$\sigma^\vartheta\in\mathcal A$.

For the potential, we impose the hard constraint on the boundary values.
For each applied voltage $f_i$ ($i=1,\ldots,M$), choose a fixed lifting
$F_i \in H^2(\Omega)$ satisfying $\Tr F_i=f_i$. We define the potential by
\begin{equation}
u_i^{\theta_i}(x)=F_i(x)+\rho_\Omega(x)U_i^{\theta_i}(x).
\label{eq:field-hard-representations}
\end{equation}
This construction gives $u_i^{\theta_i}\in H^2(\Omega)$ and enforces
$\Tr u_i^{\theta_i}=f_i$ for every parameter value.

We treat the stream function differently. For each experiment, the measured current
$g_i^\delta\in\mathcal{H}^{1/2}(\partial\Omega)$ may contain noise, so we do not impose 
a hard constraint on the
stream boundary values. Instead, we use these data to construct a baseline,
and the stream trace remains free. The measurements need not satisfy
\eqref{eq:current-compatibility}. For the baseline construction only, choose
a correction $r_i^\delta\in \mathcal{H}^{1/2}(\partial\Omega)$ such that
\begin{equation*}
\int_{\partial\Omega}r_i^\delta\,ds
=-\int_{\partial\Omega}g_i^\delta\,ds,
\end{equation*}
and set $\widetilde g_i^\delta=g_i^\delta+r_i^\delta$. The corrected data satisfy
$\int_{\partial\Omega}\widetilde g_i^\delta\,ds=0$.
We use the reference point $x_\star\in\partial\Omega$ from
Section~\ref{sec:continuous} and the positively oriented arclength parameter
$s\in[0,L]$ in \eqref{eq:boundary-stream-primitive}. The boundary stream primitive is
\begin{equation*}
\zeta_i^\delta(s)=\int_0^s \widetilde g_i^\delta(r)\,dr,
\qquad 0\le s\le L.
\end{equation*}
Since $\widetilde g_i^\delta\in \mathcal{H}^{1/2}(\partial\Omega)$ has zero
boundary integral, $\zeta_i^\delta$ has one value at each point of the closed
boundary and belongs to $\mathcal{H}^{3/2}(\partial\Omega)\cap C(\partial\Omega)$.
For either domain class considered in this paper, the trace theorem provides an
$H^2(\Omega)$ lifting. We choose a baseline $B_i\in H^2(\Omega)$ with
$\Tr B_i=\zeta_i^\delta$ and $B_i(x_\star)=0$, and keep it fixed throughout
training. We define the stream function by
\begin{equation}
\psi_i^{\theta_i}(x)
=B_i(x)+P_i^{\theta_i}(x)-P_i^{\theta_i}(x_\star).
\label{eq:stream-network-representation}
\end{equation}
Subtracting $P_i^{\theta_i}(x_\star)$ fixes the additive constant.
Since $P_i^{\theta_i}$ is free on $\partial\Omega$, the baseline
does not impose the measured current as an exact boundary condition.

The resulting trial functions then satisfy
\begin{equation}
\sigma^\vartheta\in\mathcal A, \qquad
\Tr u_i^{\theta_i}=f_i,
\qquad
\diver(\nabla^\perp\psi_i^{\theta_i})=0
\quad\text{a.e. in }\Omega,
\qquad
\psi_i^{\theta_i}(x_\star)=0
\label{eq:hard-constraint-summary}
\end{equation}
for every parameter value. We therefore fit only the constitutive relation
and the measured normal current.

\subsection{The continuous minimization problem}
\label{sec:continuous-neural-loss}

We collect the trial functions in
\begin{equation}
W^\Theta
=\bigl(\sigma^\vartheta,
 u_1^{\theta_1},\psi_1^{\theta_1},\ldots,
 u_M^{\theta_M},\psi_M^{\theta_M}\bigr),
\label{eq:neural-physical-tuple}
\end{equation}
and define the constitutive and data residuals for each experiment by
\begin{equation*}
s_i(W^\Theta)
=\nabla^\perp\psi_i^{\theta_i}
-\sigma^\vartheta\nabla u_i^{\theta_i},
\qquad
d_i^\delta(W^\Theta)
=\partial_\tau\psi_i^{\theta_i}-g_i^\delta.
\end{equation*}
Define the three continuous loss components by
\[
\begin{aligned}
\mathcal L_\Omega(W^\Theta)
&=\frac1M\sum_{i=1}^M\|s_i(W^\Theta)\|_{L^2(\Omega)}^2,\\
\mathcal L_b(W^\Theta)
&=\frac1M\sum_{i=1}^M\|\Tr s_i(W^\Theta)\|_{L^2(\partial\Omega)}^2,\\
\mathcal L_q^\delta(W^\Theta)
&=\frac1M\sum_{i=1}^M\|d_i^\delta(W^\Theta)\|_{L^2(\partial\Omega)}^2.
\end{aligned}
\]
For fixed $\lambda_b,\lambda_q>0$, define the continuous loss by
\begin{equation}
\mathcal L^\delta(W^\Theta)
=\mathcal L_\Omega(W^\Theta)
+\lambda_b\, \mathcal L_b(W^\Theta)
+\lambda_q\, \mathcal L_q^\delta(W^\Theta),
\label{eq:stream-residual-loss}
\end{equation}
and express it in terms of the neural parameters as
\begin{equation*}
\J^\delta(\Theta)=\mathcal L^\delta(W^\Theta).
\end{equation*}
For exact data, we write $\mathcal L=\mathcal L^0$ and $\J=\J^0$.
The three terms in \eqref{eq:stream-residual-loss} measure the interior
constitutive residual, its boundary trace, and the discrepancy between the
measured and computed boundary currents, respectively.

To help stabilize the reconstruction, we use the exact constraints in \eqref{eq:hard-constraint-summary} 
and the boundary penalty $\mathcal L_b(W^\Theta)$, without adding other regularization terms 
to the loss in \eqref{eq:stream-residual-loss}. The constraints already incorporate prior information directly into the trial functions. For neural approximations, we measure the interior
constitutive residual in the $L^2$ norm. 
Using the $H^1$ norm in the loss would require evaluating derivatives of the conductivity and second derivatives of the potential and stream function. However, smallness of
$\|s_i(W^\Theta)\|_{L^2(\Omega)}$ alone does not imply smallness of its boundary
trace. The term $\mathcal L_b(W^\Theta)$ directly penalizes this trace in
$L^2(\partial\Omega)$ without increasing the differential order of the loss.
The boundary term is also well defined under the assumed regularity. In fact,
$u_i^{\theta_i},\psi_i^{\theta_i}\in H^2(\Omega)$ and
$\sigma^\vartheta\in C^{0,1}(\overline\Omega)$ imply
\[
s_i(W^\Theta)\in H^1(\Omega;\R^2),
\qquad
\Tr s_i(W^\Theta)\in H^{1/2}(\partial\Omega;\R^2)
\hookrightarrow L^2(\partial\Omega;\R^2).
\]
For an exact forward tuple, \eqref{eq:forward-stream} gives $s_i=0$ in $\Omega$, 
so its trace also vanishes. Thus the boundary term is consistent with the 
governing equations and imposes no additional condition on an exact solution.

Our neural formulation is summarized in Figure~\ref{fig:nn-method}. 

\begin{figure}[!htbp]
\centering
\IfFileExists{figures/nn_method.pdf}{%
\includegraphics[width=\linewidth,height=0.375\textheight,keepaspectratio]{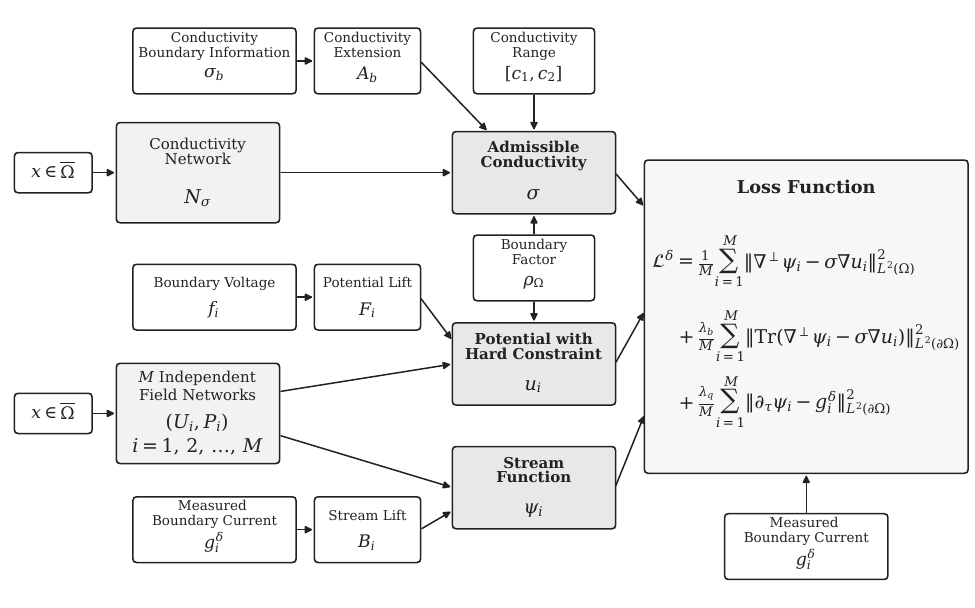}}{%
\fbox{\parbox[c][0.345\textheight][c]{0.94\linewidth}{\centering
Neural method diagram\\[6pt]
\texttt{figures/nn\_method.pdf}}}}
\caption{Neural stream function method for EIT. The constrained neural
trial functions are trained using the constitutive residuals and measured
boundary currents.}
\label{fig:nn-method}
\end{figure}

Now we state the minimization problem. For fixed neural network architectures
and $R>0$, let
\begin{equation}
\mathcal K_R=[-R,R]^{N_{\mathrm{par}}}
\label{eq:parameter-box}
\end{equation}
be the compact parameter set. We consider the minimization problem
\begin{equation}
\min_{\Theta\in\mathcal K_R}\J^\delta(\Theta).
\label{eq:fixed-architecture-minimum}
\end{equation}

\begin{proposition}
\label{prop:fixed-architecture-minimum}
For fixed finite neural network architectures and $R>0$, $\J^\delta$ is continuous on
$\mathcal K_R$. The minimization problem
\eqref{eq:fixed-architecture-minimum} then has a minimizer.
\end{proposition}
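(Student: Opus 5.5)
Once continuity is established, the existence of a minimizer follows from the Weierstrass extreme value theorem on the compact box $\mathcal K_R$. The work is therefore in continuity. I will show that the three loss components $\mathcal L_\Omega$, $\mathcal L_b$ and $\mathcal L_q^\delta$, evaluated at $W^\Theta$, depend continuously on $\Theta$. Since the loss is a finite sum over experiments, it is enough to treat one $i$.

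The plan has three steps.

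\textbf{Step 1: uniform convergence of the networks.} Take a sequence $\Theta_k\to\Theta$ in $\mathcal K_R$. Because $\varphi_{\mathrm{act}}\in C^2(\mathbb R)$, each network output and its first and second derivatives in $x$ are finite compositions and products of continuous functions of $(x,\Theta)$. These functions are therefore jointly continuous on $\overline\Omega\times\mathcal K_R$, which is a compact set. Joint continuity on a compact set gives uniform continuity. Consequently
\[
N_\sigma^{\vartheta_k}\to N_\sigma^{\vartheta},\qquad
U_i^{\theta_{i,k}}\to U_i^{\theta_i},\qquad
P_i^{\theta_{i,k}}\to P_i^{\theta_i}
\]
in $C^2(\overline\Omega)$ norm. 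In particular $P_i^{\theta_{i,k}}(x_\star)\to P_i^{\theta_i}(x_\star)$.

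\textbf{Step 2: convergence of the trial functions.} The fixed functions $A_b$, $\rho_\Omega$, $F_i$ and $B_i$ do not depend on $\Theta$, so only the network parts move.
\begin{itemize}
\item Conductivity: $\rho_\Omega$ is bounded, so $A_b+\rho_\Omega N_\sigma^{\vartheta_k}$ converges uniformly. The clipping map is $1$-Lipschitz, so $\sigma^{\vartheta_k}\to\sigma^\vartheta$ in $C(\overline\Omega)$. Only uniform convergence of $\sigma$ is needed, because $\mathcal L$ never differentiates $\sigma$.
\item Potential: $\rho_\Omega\in C^{1,1}(\overline\Omega)$ gives $\nabla(\rho_\Omega U)=U\nabla\rho_\Omega+\rho_\Omega\nabla U$. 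Hence $\nabla u_i^{\theta_{i,k}}\to\nabla u_i^{\theta_i}$ uniformly on $\overline\Omega$.
\item Stream function: $\nabla^\perp\psi_i^{\theta_{i,k}}\to\nabla^\perp\psi_i^{\theta_i}$ uniformly on $\overline\Omega$, since only the $P$-term varies.
\end{itemize}

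\textbf{Step 3: convergence of the residuals and of the loss.} Write
\[
s_i(W^{\Theta_k})-s_i(W^\Theta)
=\nabla^\perp\bigl(P_i^{\theta_{i,k}}-P_i^{\theta_i}\bigr)
-\bigl(\sigma^{\vartheta_k}-\sigma^\vartheta\bigr)\nabla F_i
-\bigl[\sigma^{\vartheta_k}\nabla(\rho_\Omega U_i^{\theta_{i,k}})-\sigma^{\vartheta}\nabla(\rho_\Omega U_i^{\theta_i})\bigr].
\]
Each term is handled as follows.
\begin{itemize}
\item The first term is continuous on $\overline\Omega$ and tends to zero uniformly.
\item The bracketed term also tends to zero uniformly, because $\sigma$ and $\nabla(\rho_\Omega U)$ are both uniformly convergent and uniformly bounded.
\item The term $(\sigma^{\vartheta_k}-\sigma^\vartheta)\nabla F_i$ involves $\nabla F_i\in H^1(\Omega)$, which is not continuous. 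I bound it by $\|\sigma^{\vartheta_k}-\sigma^\vartheta\|_\infty\|\nabla F_i\|$, once in $L^2(\Omega)$ and once in $L^2(\partial\Omega)$. The boundary norm is finite because $\Tr\nabla F_i\in H^{1/2}(\partial\Omega)\subset L^2(\partial\Omega)$.
\end{itemize}
The trace of an $H^1\cap C(\overline\Omega)$ function is its restriction to the boundary. Uniform convergence on $\overline\Omega$ therefore gives convergence in $L^2(\partial\Omega)$, since $\partial\Omega$ has finite length. In the polygonal case, the tangential derivative is taken edgewise, and the same argument applies on each edge.

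For $d_i^\delta$, I use $\partial_\tau\psi=\nabla^\perp\psi\cdot n$ with $|n|=1$ almost everywhere, together with the fact that $g_i^\delta$ is fixed. This gives $d_i^\delta(W^{\Theta_k})\to d_i^\delta(W^\Theta)$ in $L^2(\partial\Omega)$.

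Since squared $L^2$ norms are continuous, all three loss components converge, and hence $\J^\delta(\Theta_k)\to\J^\delta(\Theta)$.

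The only real obstacle is the low-regularity fixed parts. The trace of the product $\sigma\nabla F_i$ is well defined only because $\sigma$ is Lipschitz, and for convergence I need that the restriction of a continuous $\sigma$ times the trace of $\nabla F_i$ equals the trace of the product. I will check this by approximating $F_i$ with smooth functions in $H^2$ and using that multiplication by a Lipschitz function is bounded on $H^1$.
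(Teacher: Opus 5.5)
Your proof is correct and follows essentially the same route as the paper. Continuity of the raw network outputs in $C^2(\overline\Omega)$ with respect to $\Theta$ is transferred to the trial functions through the hard-constraint constructions and the $1$-Lipschitz clipping map, then to the three residual norms via the trace theorem, and the extreme value theorem on the compact box $\mathcal K_R$ finishes the argument. You simply supply details the paper leaves implicit, notably the treatment of the fixed lifting term $(\sigma^{\vartheta_k}-\sigma^{\vartheta})\nabla F_i$; once your product-trace identity is in hand, its boundary contribution is in fact zero, because $\sigma^{\vartheta_k}$ and $\sigma^{\vartheta}$ both equal $\sigma_b$ on $\partial\Omega$.
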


\begin{proof}
For fixed finite architectures, the raw network outputs depend continuously 
on $\Theta$ in $C^2(\overline{\Omega})$. The trial function constructions, the Lipschitz clipping map, 
and the trace theorem therefore imply that every term in $\J^\delta$ is continuous in $\Theta$. 
Thus $\J^\delta$ is continuous on the compact set $\mathcal K_R$ and attains its minimum.
\end{proof}

\section{Convergence analysis}
\label{sec:stream-analysis}

In this section, we analyze the conductivity reconstructed by the neural
minimization problem. We define the physical current residual and
the set of conductivities that reproduce the exact finite measurements.
\begin{equation*}
\begin{aligned}
\mathcal R(\sigma)
&:=\frac1M\sum_{i=1}^M
\|\Lambda_\sigma f_i-g_i\|_{H^{-1/2}(\partial\Omega)}^2,\\
\mathcal M
&:=\{\sigma\in\A:\mathcal R(\sigma)=0\}.
\end{aligned}
\end{equation*}
The set $\mathcal M$ is nonempty since
$\sigma^{\mathrm{true}}\in\mathcal M$. Both
$\Lambda_\sigma f_i$ and $g_i$ are in $L^2(\partial\Omega)$.
Their equality in $H^{-1/2}(\partial\Omega)$ is therefore equivalent to
equality almost everywhere on the boundary. Consequently,
\begin{equation*}
\mathcal M
=\{\sigma\in\A:\mathcal R(\sigma)=0\}
=\{\sigma\in\A:\Lambda_\sigma f_i=g_i
\text{ in }L^2(\partial\Omega),\
i=1,\ldots,M\}.
\end{equation*}
We first prove the consistency of the continuous loss and bound $\mathcal R$ in terms of the loss and measurement noise. We also obtain a conditional estimate of the conductivity’s distance to $\mathcal M$. We then establish the approximation properties of the constrained trial functions. Finally, we derive error estimates and convergence results for the sampled problem, accounting for neural approximation, measurement noise, and sampling.

\subsection{Consistency of the continuous problem}
\label{sec:zero-loss-consistency}

Let $W^\Theta$ be a trial tuple as in \eqref{eq:neural-physical-tuple}, with
\begin{equation}
\sigma^\vartheta\in\A,\qquad
u_i^{\theta_i},\psi_i^{\theta_i}\in H^2(\Omega),\qquad
\Tr u_i^{\theta_i}=f_i,\qquad
\psi_i^{\theta_i}(x_\star)=0,
\quad i=1,\ldots,M.
\label{eq:joint-state-regularity}
\end{equation}
For the conductivity $\sigma^\vartheta$, let
$u_{\sigma^\vartheta}(f_i)$ and $\psi_{\sigma^\vartheta}(f_i)$ denote the
exact forward fields. 
We show that zero loss characterizes the exact inverse solutions and their
corresponding forward fields.

\begin{proposition}
\label{prop:zero-loss-characterization}
For exact data and every trial tuple $W^\Theta$ satisfying
\eqref{eq:joint-state-regularity}, we have
\begin{equation*}
\mathcal L(W^\Theta)=0
\quad\Longleftrightarrow\quad
\sigma^\vartheta\in\mathcal M,\qquad
u_i^{\theta_i}=u_{\sigma^\vartheta}(f_i),\qquad
\psi_i^{\theta_i}=\psi_{\sigma^\vartheta}(f_i)
\quad\text{for all }i.
\end{equation*}
\end{proposition}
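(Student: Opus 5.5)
The plan is to prove the two implications separately, using Proposition~\ref{prop:stream-equivalence} and the boundary identity \eqref{eq:forward-current}.

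For the forward direction, assume $\mathcal L(W^\Theta)=0$. Since every term of the loss is nonnegative, the three components vanish separately. From $\mathcal L_\Omega=0$ we get $\nabla^\perp\psi_i^{\theta_i}=\sigma^\vartheta\nabla u_i^{\theta_i}$ a.e. in $\Omega$ for each $i$. Here $\sigma^\vartheta\in\A$, and $u_i^{\theta_i},\psi_i^{\theta_i}\in H^2(\Omega)$. So Proposition~\ref{prop:stream-equivalence} gives $\diver(\sigma^\vartheta\nabla u_i^{\theta_i})=0$ in $\Omega$, which holds a.e. and hence weakly. Together with $\Tr u_i^{\theta_i}=f_i$, the uniqueness of the weak solution of \eqref{eq:forward-problem} (Lax--Milgram) yields $u_i^{\theta_i}=u_{\sigma^\vartheta}(f_i)$. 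Then $\psi_i^{\theta_i}$ is a stream function for $\sigma^\vartheta\nabla u_{\sigma^\vartheta}(f_i)$ vanishing at $x_\star$. By the uniqueness up to constants in Proposition~\ref{prop:stream-equivalence} and the normalization \eqref{eq:forward-stream}, $\psi_i^{\theta_i}=\psi_{\sigma^\vartheta}(f_i)$. Finally, $\mathcal L_q=0$ gives $\partial_\tau\psi_i^{\theta_i}=g_i$ in $L^2(\partial\Omega)$. Combining this with \eqref{eq:forward-current} for the conductivity $\sigma^\vartheta$ gives $\Lambda_{\sigma^\vartheta}f_i=\partial_\tau\psi_{\sigma^\vartheta}(f_i)=g_i$, so $\sigma^\vartheta\in\mathcal M$.

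For the converse, assume $\sigma^\vartheta\in\mathcal M$ and that the fields are the exact forward fields. Then $s_i=0$ in $\Omega$ by \eqref{eq:forward-stream}. Since $s_i\in H^1(\Omega;\R^2)$, its trace also vanishes, which kills $\mathcal L_\Omega$ and $\mathcal L_b$. By \eqref{eq:forward-current} and the definition of $\mathcal M$, $\partial_\tau\psi_i^{\theta_i}=\Lambda_{\sigma^\vartheta}f_i=g_i$ in $L^2(\partial\Omega)$, so $\mathcal L_q=0$.

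The only delicate step is justifying the boundary identity $\partial_\tau\psi=(\nabla^\perp\psi)\cdot n=\sigma_b\partial_n u$ in $L^2(\partial\Omega)$ for $H^2$ functions. In the polygonal case it must be read edgewise. Since $\nabla\psi,\nabla u\in H^1$, their traces lie in $\mathcal H^{1/2}\subset L^2$, and the relation $\nabla^\perp\psi=\sigma\nabla u$ passes to traces because $\sigma$ is Lipschitz with trace $\sigma_b$. The identification of the tangential derivative of $\Tr\psi$ with $\nabla\psi\cdot\tau$ holds for $H^2$ functions by density of smooth functions. This is the same fact the paper uses to obtain \eqref{eq:forward-current}, so I will invoke it rather than reprove it.
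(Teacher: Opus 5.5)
Your proof is correct and follows essentially the same argument as the paper. The one difference is the order in the forward direction: you first identify $\psi_i^{\theta_i}=\psi_{\sigma^\vartheta}(f_i)$ and then obtain $\Lambda_{\sigma^\vartheta}f_i=g_i$ from \eqref{eq:forward-current}, whereas the paper uses $\Tr s_i(W^\Theta)=0$ (from $\mathcal L_b=0$) to get $\sigma^\vartheta\partial_n u_i^{\theta_i}=\partial_\tau\psi_i^{\theta_i}$ directly, which is the same boundary identity.
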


\begin{proof}
Suppose first that $\mathcal L(W^\Theta)=0$. All terms in $\mathcal L$ are nonnegative,
so each term vanishes. In particular, $s_i(W^\Theta)=0$ in $L^2(\Omega;\R^2)$ gives
\[
\nabla^\perp\psi_i^{\theta_i}
=\sigma^\vartheta\nabla u_i^{\theta_i}
\quad\text{a.e. in }\Omega.
\]
Since $\diver(\nabla^\perp\psi_i^{\theta_i})=0$ a.e. in $\Omega$, we obtain
\[
\diver(\sigma^\vartheta\nabla u_i^{\theta_i})=0
\quad\text{a.e. in }\Omega.
\]
The condition $\Tr u_i^{\theta_i}=f_i$ and uniqueness of the Dirichlet
problem \eqref{eq:forward-problem} imply
$u_i^{\theta_i}=u_{\sigma^\vartheta}(f_i)$. The data residual
$d_i^0(W^\Theta)=0$ in $L^2(\partial\Omega)$ gives
$\partial_\tau\psi_i^{\theta_i}=g_i$ in $L^2(\partial\Omega)$.
Combining this identity with $\Tr s_i(W^\Theta)=0$ in $L^2(\partial\Omega)$,
we obtain
\[
\Lambda_{\sigma^\vartheta}f_i = \sigma^\vartheta \partial_n u_{\sigma^\vartheta}(f_i)
=\sigma^\vartheta\partial_n u_i^{\theta_i}
=(\nabla^\perp\psi_i^{\theta_i})\cdot n
=\partial_\tau\psi_i^{\theta_i}
=g_i.
\]
Thus $\sigma^\vartheta\in\mathcal M$. Finally,
\[
\nabla^\perp\psi_i^{\theta_i}=\sigma^\vartheta\nabla u_i^{\theta_i} 
=\sigma^\vartheta\nabla u_{\sigma^\vartheta}(f_i)
=\nabla^\perp\psi_{\sigma^\vartheta}(f_i).
\]
It follows that $\psi_i^{\theta_i}-\psi_{\sigma^\vartheta}(f_i)$ is constant
in $\Omega$. Both functions vanish at $x_\star$, so this constant is zero.

Conversely, suppose that $\sigma^\vartheta\in\mathcal M$ and $W^\Theta$ is the 
corresponding exact forward tuple. 
Then $s_i(W^\Theta)=0$ and $d_i^0(W^\Theta)=0$ for every $i$. Since
$s_i(W^\Theta)\in H^1(\Omega;\R^2)$, its trace vanishes as well.
Every term in $\mathcal L(W^\Theta)$ is therefore zero.
\end{proof}

\subsection{Control of the physical residual}
\label{sec:residual-forward-control}

We next estimate the boundary current residual in terms of the neural loss
and the measurement noise. Suppose that the noisy currents
$g_i^\delta\in \mathcal{H}^{1/2}(\partial\Omega)$ satisfy
\begin{equation}
\frac1M\sum_{i=1}^M
\|g_i^\delta-g_i\|_{L^2(\partial\Omega)}^2
\leq\delta^2.
\label{eq:noise-model}
\end{equation}
The noise bound in $L^2(\partial\Omega)$ allows us to compare the loss for
$g_i^\delta$ with the loss for the exact data $g_i$. The physical residual
$\mathcal R$ measures the boundary current in its natural weak space
$H^{-1/2}(\partial\Omega)$. The embedding
$L^2(\partial\Omega)\hookrightarrow H^{-1/2}(\partial\Omega)$ allows the
boundary current term in the neural loss to help control $\mathcal R$.

\begin{theorem}
\label{lem:stream-current}
Let $W^\Theta$ be any trial tuple satisfying
\eqref{eq:joint-state-regularity}. Then
\begin{equation}
\mathcal R(\sigma^\vartheta)
\leq C\left[\mathcal L^\delta(W^\Theta)+\delta^2\right],
\label{eq:stream-residual-current}
\end{equation}
where $C$ depends on $\Omega$, $c_1$, $c_2$, and $\lambda_q$. 
\end{theorem}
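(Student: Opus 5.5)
The plan is to estimate, for each $i$, the boundary current defect $\Lambda_{\sigma^\vartheta}f_i-g_i$ in $H^{-1/2}(\partial\Omega)$ by duality against $\varphi\in H^{1/2}(\partial\Omega)$. We split it as
\[
\Lambda_{\sigma^\vartheta}f_i-g_i
=\bigl(\Lambda_{\sigma^\vartheta}f_i-\partial_\tau\psi_i^{\theta_i}\bigr)
+\bigl(\partial_\tau\psi_i^{\theta_i}-g_i^\delta\bigr)
+\bigl(g_i^\delta-g_i\bigr).
\]
The last two terms are $L^2(\partial\Omega)$ functions. By the embedding $L^2(\partial\Omega)\hookrightarrow H^{-1/2}(\partial\Omega)$ they are bounded by $\|d_i^\delta(W^\Theta)\|_{L^2(\partial\Omega)}$ and $\|g_i^\delta-g_i\|_{L^2(\partial\Omega)}$. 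After squaring and averaging over $i$, these give contributions $\lambda_q^{-1}\mathcal L^\delta(W^\Theta)$ and $\delta^2$ by \eqref{eq:noise-model}. The work is in the first term, which I will control by the interior residual alone, so the boundary penalty $\mathcal L_b$ is not needed. This matches the stated dependence of $C$, which does not involve $\lambda_b$.

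For the first term, fix $\varphi\in H^{1/2}(\partial\Omega)$ and let $v\in H^1(\Omega)$ be a bounded extension with $\Tr v=\varphi$ and $\|v\|_{H^1(\Omega)}\le C_\Omega\|\varphi\|_{H^{1/2}(\partial\Omega)}$, for example the harmonic extension.

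\begin{itemize}
\item \emph{Forward current.} By the weak definition of the Dirichlet-to-Neumann map, $\langle\Lambda_{\sigma^\vartheta}f_i,\varphi\rangle=\int_\Omega\sigma^\vartheta\nabla u_{\sigma^\vartheta}(f_i)\cdot\nabla v\,dx$.
\item \emph{Trial current.} Since $\psi_i^{\theta_i}\in H^2(\Omega)$ and $\diver\nabla^\perp\psi_i^{\theta_i}=0$, the Green formula together with \eqref{eq:forward-current}-type pointwise identity $(\nabla^\perp\psi)\cdot n=\partial_\tau\psi$ gives $\langle\partial_\tau\psi_i^{\theta_i},\varphi\rangle=\int_\Omega\nabla^\perp\psi_i^{\theta_i}\cdot\nabla v\,dx$.
\item \emph{Difference.} Substituting $\nabla^\perp\psi_i^{\theta_i}=\sigma^\vartheta\nabla u_i^{\theta_i}+s_i(W^\Theta)$ and writing $e_i=u_{\sigma^\vartheta}(f_i)-u_i^{\theta_i}$, we get
\[
\langle\Lambda_{\sigma^\vartheta}f_i-\partial_\tau\psi_i^{\theta_i},\varphi\rangle
=\int_\Omega\bigl(\sigma^\vartheta\nabla e_i-s_i(W^\Theta)\bigr)\cdot\nabla v\,dx .
\]
\end{itemize}

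It remains to bound $\nabla e_i$ by $s_i$; this is the main step. Since $\Tr u_i^{\theta_i}=f_i$, we have $e_i\in H^1_0(\Omega)$. For $w\in H^1_0(\Omega)$, the exact solution satisfies $\int_\Omega\sigma^\vartheta\nabla u_{\sigma^\vartheta}(f_i)\cdot\nabla w\,dx=0$. Moreover $\int_\Omega\nabla^\perp\psi_i^{\theta_i}\cdot\nabla w\,dx=0$, because the field is divergence free and $w$ has zero trace. Subtracting gives
\[
\int_\Omega\sigma^\vartheta\nabla e_i\cdot\nabla w\,dx=-\int_\Omega s_i(W^\Theta)\cdot\nabla w\,dx
\qquad\text{for all } w\in H^1_0(\Omega).
\]
Testing with $w=e_i$ and using $\sigma^\vartheta\ge c_1$ yields $\|\nabla e_i\|_{L^2(\Omega)}\le c_1^{-1}\|s_i(W^\Theta)\|_{L^2(\Omega)}$. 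Then $\sigma^\vartheta\le c_2$ and Cauchy--Schwarz give
\[
|\langle\Lambda_{\sigma^\vartheta}f_i-\partial_\tau\psi_i^{\theta_i},\varphi\rangle|
\le (1+c_2/c_1)\,C_\Omega\,\|s_i(W^\Theta)\|_{L^2(\Omega)}\,\|\varphi\|_{H^{1/2}(\partial\Omega)} .
\]
Hence this term is bounded in $H^{-1/2}(\partial\Omega)$ by a constant depending on $\Omega,c_1,c_2$ times $\|s_i(W^\Theta)\|_{L^2(\Omega)}$.

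Finally, we combine the three pieces with $(a+b+c)^2\le3(a^2+b^2+c^2)$ and average over $i$. This gives
\[
\mathcal R(\sigma^\vartheta)\le C\bigl[\mathcal L_\Omega(W^\Theta)+\lambda_q^{-1}\lambda_q\mathcal L_q^\delta(W^\Theta)+\delta^2\bigr]
\le C\bigl[\mathcal L^\delta(W^\Theta)+\delta^2\bigr],
\]
using $\mathcal L_b\ge0$, with $C$ depending only on $\Omega,c_1,c_2,\lambda_q$. The points to check carefully are the two weak identities. The first is that $\Lambda_{\sigma^\vartheta}$ is defined via an arbitrary $H^1$ extension, which is consistent because $u_{\sigma^\vartheta}(f_i)$ solves the equation. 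The second is the boundary Green identity for $\nabla^\perp\psi$ on the curvilinear polygon, which holds edgewise since $\psi\in H^2(\Omega)$.
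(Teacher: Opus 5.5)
Your proposal is correct and follows the paper's proof essentially step for step: the same three-term splitting, the same error $e_i=u_{\sigma^\vartheta}(f_i)-u_i^{\theta_i}\in H_0^1(\Omega)$ with energy bound $\|\nabla e_i\|_{L^2(\Omega)}\le c_1^{-1}\|s_i(W^\Theta)\|_{L^2(\Omega)}$, and the same $(1+c_2/c_1)$ constant. The only difference is that you verify the $H^{-1/2}$ bound on the normal trace of the divergence-free field $\sigma^\vartheta\nabla u_{\sigma^\vartheta}(f_i)-\nabla^\perp\psi_i^{\theta_i}$ by hand, via duality with an $H^1$ extension, where the paper cites the normal trace theorem; also, after subtracting, your identity should read $\int_\Omega\sigma^\vartheta\nabla e_i\cdot\nabla w\,dx=+\int_\Omega s_i(W^\Theta)\cdot\nabla w\,dx$, a sign slip that does not affect the estimate.
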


\begin{proof}
Set $\sigma=\sigma^\vartheta$, $u_i=u_i^{\theta_i}$, and
$\psi_i=\psi_i^{\theta_i}$. For each $i$, we decompose the current error as
\begin{align*}
\Lambda_\sigma f_i-g_i
&=(\Lambda_\sigma f_i-\partial_\tau\psi_i)
+(\partial_\tau\psi_i-g_i^\delta)
+(g_i^\delta-g_i).
\end{align*}
The last two terms belong to $L^2(\partial\Omega)$, which embeds continuously
into $H^{-1/2}(\partial\Omega)$. We have
\begin{align}
\|\Lambda_\sigma &{}f_i-g_i\|_{H^{-1/2}(\partial\Omega)}
\leq
\|\Lambda_\sigma f_i-\partial_\tau\psi_i\|_{H^{-1/2}(\partial\Omega)}
+C\|\partial_\tau\psi_i-g_i^\delta\|_{L^2(\partial\Omega)}
+C\|g_i^\delta-g_i\|_{L^2(\partial\Omega)}\notag\\
&\leq C(
\|\Lambda_\sigma f_i-\partial_\tau\psi_i\|_{H^{-1/2}(\partial\Omega)}
+\|d_i^\delta(W^\Theta)\|_{L^2(\partial\Omega)}
+\|g_i^\delta-g_i\|_{L^2(\partial\Omega)}
).
\label{eq:initial-current-error-decomposition}
\end{align}
We next bound the first term on the right. Define
\[
K_i=\sigma\nabla u_\sigma(f_i)-\nabla^\perp\psi_i.
\]
Both fluxes are divergence free, so $\diver K_i=0$. 
Its normal trace is
\[
K_i\cdot n=\Lambda_\sigma f_i-\partial_\tau\psi_i
\qquad\text{on }\partial\Omega.
\]
The normal trace theorem then gives
\[
\|\Lambda_\sigma f_i-\partial_\tau\psi_i\|_{H^{-1/2}(\partial\Omega)}=
\|K_i\cdot n\|_{H^{-1/2}(\partial\Omega)}
\leq C\|K_i\|_{L^2(\Omega)}.
\]
To bound $K_i$, let $e_i=u_\sigma(f_i)-u_i\in H_0^1(\Omega)$, where the 
zero boundary trace follows because both potentials have trace $f_i$.
From the definition
$s_i(W^\Theta)=\nabla^\perp\psi_i-\sigma\nabla u_i$ we know
\begin{equation}
K_i=\sigma\nabla e_i -s_i(W^\Theta).
\label{eq:Ki-residual-error}
\end{equation}
Using $\diver K_i=0$, for every $\varphi\in H_0^1(\Omega)$, we obtain
\[
0=\int_\Omega K_i\cdot\nabla\varphi\,dx
=\int_\Omega\sigma\nabla e_i\cdot\nabla\varphi\,dx-\int_\Omega s_i(W^\Theta)\cdot\nabla\varphi\,dx.
\]
Taking $\varphi=e_i$ and using $\sigma\geq c_1$, we have
\begin{equation}
\|\nabla e_i\|_{L^2(\Omega)}
\leq c_1^{-1}\|s_i(W^\Theta)\|_{L^2(\Omega)}.
\label{eq:stream-energy-bound}
\end{equation}
Using \eqref{eq:Ki-residual-error}, $\sigma\leq c_2$, and
\eqref{eq:stream-energy-bound}, we know
\begin{align*}
\|K_i\|_{L^2(\Omega)}
&\leq c_2\|\nabla e_i\|_{L^2(\Omega)} + \|s_i(W^\Theta)\|_{L^2(\Omega)}
\leq\big(1+\frac{c_2}{c_1}\big)
\|s_i(W^\Theta)\|_{L^2(\Omega)}.
\end{align*}
Combining the preceding estimates with the initial current error
estimate \eqref{eq:initial-current-error-decomposition}, we get
\begin{align*}
\|\Lambda_\sigma f_i-g_i\|_{H^{-1/2}(\partial\Omega)}
&\leq C\bigl(
\|s_i(W^\Theta)\|_{L^2(\Omega)}
+\|d_i^\delta(W^\Theta)\|_{L^2(\partial\Omega)}
+\|g_i^\delta-g_i\|_{L^2(\partial\Omega)}
\bigr).
\end{align*}
Squaring this estimate gives
\[
\|\Lambda_\sigma f_i-g_i\|_{H^{-1/2}(\partial\Omega)}^2
\leq C\bigl(
\|s_i(W^\Theta)\|_{L^2(\Omega)}^2
+\|d_i^\delta(W^\Theta)\|_{L^2(\partial\Omega)}^2
+\|g_i^\delta-g_i\|_{L^2(\partial\Omega)}^2
\bigr).
\]
Averaging over $i=1,\ldots,M$ and using the noise bound \eqref{eq:noise-model}, we obtain
\[
\mathcal R(\sigma)
\leq \frac{C}{M}\sum_{i=1}^{M}
\left(
\|s_i(W^\Theta)\|_{L^2(\Omega)}^2
+\|d_i^\delta(W^\Theta)\|_{L^2(\partial\Omega)}^2
\right)+C\delta^2
\leq C(\mathcal L^\delta(W^\Theta)+\delta^2),
\]
where the last inequality follows from the definition of $\mathcal L^\delta(W^\Theta)$
in \eqref{eq:stream-residual-loss}. 
\end{proof}

To relate the residual to the conductivity error, define
\begin{equation*}
\operatorname{dist}_{C(\overline\Omega)}(\sigma,\mathcal M)
:=\inf_{\widetilde\sigma\in\mathcal M}
\|\sigma-\widetilde\sigma\|_{C(\overline\Omega)}.
\end{equation*}
We use the following condition.
\begin{assumption}
\label{ass:solution-set-stability}
There is a nondecreasing function $\omega:[0,\infty)\to[0,\infty)$ such that
$\omega(t)\to 0$ as $t\to 0$ and $\operatorname{dist}_{C(\overline\Omega)}(\sigma,\mathcal M)
\leq\omega (\mathcal R(\sigma))$ for every $\sigma\in\A$.
\end{assumption}
This condition means that a small residual for the prescribed finite measurements gives a small uniform distance to 
$\mathcal M$. For the inverse problem with the full Dirichlet-to-Neumann map, 
a corresponding stability estimate holds under uniform a priori H\"older bounds for the conductivity \cite{BarceloFaracoRuiz2007}. For the inverse problem with finitely many measurements,
stability depends on several factors, including the admissible conductivities,
the number and choice of boundary data, and the independent and complementary
information provided by these data. Existing uniqueness and stability results for EIT with sufficiently informative finite measurements and suitable a priori restrictions on the conductivity support this type of stability 
assumption \cite{AlbertiSantacesaria2022,Harrach2019}.
Conditional stability estimates have also been used to bound PINN solution errors in terms of training and quadrature errors for data assimilation problems \cite{MishraMolinaro2022}.

\begin{corollary}
\label{cor:stream-conductivity}
Let $W^\Theta$ be any trial tuple satisfying
\eqref{eq:joint-state-regularity}.
If Assumption~\ref{ass:solution-set-stability}
holds, then
\begin{equation}
\operatorname{dist}_{C(\overline\Omega)}(\sigma^\vartheta,\mathcal M)
\leq
\omega\!\left(C[\mathcal L^\delta(W^\Theta)+\delta^2]\right),
\label{eq:joint-loss-to-solution-set}
\end{equation}
where $C$ is the constant in Theorem~\ref{lem:stream-current}.
\end{corollary}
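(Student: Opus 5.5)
The corollary follows by composing Theorem~\ref{lem:stream-current} with Assumption~\ref{ass:solution-set-stability}, using only that $\omega$ is nondecreasing. First I would check that the conductivity of the trial tuple is admissible. By \eqref{eq:joint-state-regularity}, $\sigma^\vartheta\in\A$, so Assumption~\ref{ass:solution-set-stability} applies to $\sigma=\sigma^\vartheta$ and gives
\[
\operatorname{dist}_{C(\overline\Omega)}(\sigma^\vartheta,\mathcal M)\leq\omega\bigl(\mathcal R(\sigma^\vartheta)\bigr).
\]
This is well defined because $\mathcal R(\sigma^\vartheta)\in[0,\infty)$. The forward fields exist and $\Lambda_{\sigma^\vartheta}f_i\in L^2(\partial\Omega)$ by the regularity discussion in Section~\ref{sec:continuous}.

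Next, since $W^\Theta$ satisfies \eqref{eq:joint-state-regularity}, Theorem~\ref{lem:stream-current} gives
\[
\mathcal R(\sigma^\vartheta)\leq C\bigl[\mathcal L^\delta(W^\Theta)+\delta^2\bigr],
\]
with the same constant $C$, which depends only on $\Omega$, $c_1$, $c_2$ and $\lambda_q$. Because $\omega$ is nondecreasing on $[0,\infty)$ and both arguments are nonnegative, applying $\omega$ to both sides preserves the inequality. Chaining the two displays then yields \eqref{eq:joint-loss-to-solution-set}.

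No step is a real obstacle. The only point to check is that the constant in the corollary is exactly the one from Theorem~\ref{lem:stream-current}, and that no monotonicity or continuity of $\omega$ beyond \emph{nondecreasing} is needed. The property $\omega(t)\to0$ as $t\to0$ is not used here. It matters only later, for concluding convergence of the reconstructed conductivity to $\mathcal M$ when $\mathcal L^\delta(W^\Theta)+\delta^2\to0$.
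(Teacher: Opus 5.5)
Your proposal is correct and matches the paper's proof: the paper likewise applies Assumption~\ref{ass:solution-set-stability} to $\sigma^\vartheta\in\A$ and chains it with \eqref{eq:stream-residual-current} through the monotonicity of $\omega$. Your observation that only the nondecreasing property of $\omega$ is used here, with $\omega(t)\to0$ needed only for the later convergence statement, is also accurate.
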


\begin{proof}
The result follows from Assumption~\ref{ass:solution-set-stability} and
\eqref{eq:stream-residual-current}.
\end{proof}

\subsection{Approximation by the constrained neural trial functions}
\label{sec:neural-approximation}

The trial fields in Section~\ref{sec:neural-hard-constraints} satisfy the
prescribed hard constraints for every parameter value. We now show that these
constraints do not reduce the approximation ability of the trial class.
The usual uniform approximation theorem does not apply directly because the
approximating functions must also satisfy the constraints. We first establish
the required density properties.

\begin{lemma}
\label{lem:rho-density}
The boundary factor $\rho_\Omega$ in
\eqref{eq:boundary-factor-general} can be chosen so that
\begin{equation}
\overline{\rho_\Omega C(\overline\Omega)}^{\,C(\overline\Omega)}
=\{h\in C(\overline\Omega):h|_{\partial\Omega}=0\}
\label{eq:rho-density}
\end{equation}
and
\begin{equation}
\overline{\rho_\Omega C^2(\overline\Omega)}^{\,H^2(\Omega)}
=H^2(\Omega)\cap H_0^1(\Omega).
\label{eq:rho-H2-density}
\end{equation}
\end{lemma}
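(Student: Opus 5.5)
We would use the freedom in the choice of $\rho_\Omega$ to require, in addition to \eqref{eq:boundary-factor-general}, that $\rho_\Omega$ is comparable to the boundary distance:
\[
c\,\operatorname{dist}(x,\partial\Omega)\le\rho_\Omega(x)\le C\,\operatorname{dist}(x,\partial\Omega),\qquad x\in\overline\Omega .
\]
In the $C^{1,1}$ case we would take the signed distance near $\partial\Omega$, which is $C^{1,1}$ in a tubular neighborhood, and glue it smoothly to a positive function inside. For a convex curvilinear polygon we would take a product or smoothed minimum of $C^{1,1}$ edge-defining functions that vanish to first order on each edge. We expect this to be essentially the construction of Appendix~\ref{app:boundary-factors}.

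For \eqref{eq:rho-density}, the inclusion $\subseteq$ is immediate, since $\rho_\Omega v$ is continuous and vanishes on $\partial\Omega$, and the right-hand side is closed. For $\supseteq$, let $h\in C(\overline\Omega)$ with $h|_{\partial\Omega}=0$ and set
\[
h_\varepsilon=\operatorname{sign}(h)\,(|h|-\varepsilon)_+ .
\]
Then $\|h-h_\varepsilon\|_{C(\overline\Omega)}\le\varepsilon$. By uniform continuity, $h_\varepsilon$ vanishes on a neighborhood of $\partial\Omega$. Hence $v_\varepsilon=h_\varepsilon/\rho_\Omega$, extended by zero, lies in $C(\overline\Omega)$ and satisfies $\rho_\Omega v_\varepsilon=h_\varepsilon$.

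For \eqref{eq:rho-H2-density}, the inclusion $\subseteq$ follows because $\rho_\Omega\in C^{1,1}(\overline\Omega)=W^{2,\infty}(\Omega)$. This gives $\rho_\Omega v\in H^2(\Omega)$ for $v\in C^2(\overline\Omega)$ with zero trace, and $H^2(\Omega)\cap H_0^1(\Omega)$ is closed in $H^2(\Omega)$ by continuity of the trace. For $\supseteq$, fix $w\in H^2(\Omega)\cap H_0^1(\Omega)$ and set $v=w/\rho_\Omega$. We would show that $v$ lies in the weighted space
\[
V=\bigl\{v:\ \|v\|_V^2:=\|v\|_{H^1(\Omega)}^2+\|\rho_\Omega D^2v\|_{L^2(\Omega)}^2<\infty\bigr\}.
\]
The estimate $v\in H^1$ comes from a Hardy inequality. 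Since $w$ and $\nabla w\cdot\tau$ vanish on $\partial\Omega$, one has $w/d\in L^2$ and $\nabla w/d-w\nabla d/d^2\in L^2$. This is a one-dimensional Hardy argument in the normal variable in local boundary coordinates, using $w(0)=0$ and $\partial_n w\in H^1$, and the comparability of $\rho_\Omega$ with $d$ transfers it to $\rho_\Omega$. The bound on $\rho_\Omega D^2v$ then follows from the product rule
\[
D^2w=v\,D^2\rho_\Omega+2\,\nabla\rho_\Omega\otimes_{\mathrm{sym}}\nabla v+\rho_\Omega D^2v .
\]
The same identity gives $\|\rho_\Omega z\|_{H^2}\le C\|z\|_V$, so it suffices to prove that $C^2(\overline\Omega)$, or even $C^\infty(\overline\Omega)$, is dense in $V$.

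This density is the step we expect to be the main obstacle. We would first use a partition of unity subordinate to boundary charts together with an interior patch; both domain classes are Lipschitz, so the segment condition holds. On each boundary patch we would translate the function a distance $t$ inward along a fixed transversal direction and then mollify at scale $\varepsilon\ll t$, as in the standard proof of density of $C^\infty(\overline\Omega)$ in $H^k$. The delicate point is to control $\|\rho_\Omega D^2(v-v_{t,\varepsilon})\|_{L^2}$. For this we would use that $\rho_\Omega$ is Lipschitz and comparable to $d$. Translation then only changes the weight by a factor $1+O(t/d)$, and mollification commutes with the weight up to an error $\varepsilon\,|D^2v_t|$, which is controlled because $\varepsilon\lesssim t\lesssim d$ on the shifted support. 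Near the corners of the polygon, the Hardy step in the first paragraph must also be checked on each edge chart. Convexity and the first-order vanishing of $\rho_\Omega$ along each edge should make this work, but it is the part we would verify most carefully.
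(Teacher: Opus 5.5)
Your uniform-density argument is fine. It uses soft thresholding in place of the paper's $h/\max\{\rho_\Omega,\varepsilon\}$. In the $C^{1,1}$ case, your weighted-space route looks workable as an alternative to the paper's trace argument: show $v=w/\rho_\Omega\in V$ by Hardy, then approximate by translation and mollification in $V$.

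\textbf{The gap is the convex curvilinear polygon case,} which the lemma must cover. More careful checking on edge charts will not close it, because the key step $w/\rho_\Omega\in H^1(\Omega)$ is false there for every factor allowed by \eqref{eq:boundary-factor-general}.

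First, the comparability $\rho_\Omega\ge c\,\operatorname{dist}(\cdot,\partial\Omega)$ is impossible. Suppose $\rho_\Omega\in C^{1,1}(\overline\Omega)$ vanishes on two edges meeting at a vertex $V$ with interior angle in $(0,\pi)$. Then $\nabla\rho_\Omega(V)$ is orthogonal to two independent tangents, so $\nabla\rho_\Omega(V)=0$. Hence $\rho_\Omega(x)\le C|x-V|^2$, since segments from $V$ stay in $\overline\Omega$ by convexity. The distance, by contrast, grows linearly along the bisector. The product $\delta^{(1)}\delta^{(2)}$ behaves like $|x-V|^2$ there, and a smoothed minimum comparable to the distance cannot be $C^1$ at $V$.

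More fundamentally, the quotient itself leaves $H^1$. Take $\Omega=(-1,1)^2$ with the product factor $\rho_\Omega=(1-x_1^2)(1-x_2^2)$. Near the corner $(1,1)$ set $a=1-x_1$, $b=1-x_2$, $r=(a^2+b^2)^{1/2}$, and $w=\chi\,ab\,r^{\beta-1}$, with a cutoff $\chi$ and $0<\beta<1$.
\begin{itemize}
\item Since $ab\,r^{\beta-1}$ is homogeneous of degree $1+\beta$, we have $|D^2w|\le Cr^{\beta-1}\in L^2$. Also $w$ vanishes on $\partial\Omega$, so $w\in H^2(\Omega)\cap H_0^1(\Omega)$.
\item However, $w/\rho_\Omega=\chi\,r^{\beta-1}/((2-a)(2-b))$ has $|\nabla(w/\rho_\Omega)|\ge c\,r^{\beta-2}$ near the corner, and $\int_0 r^{2\beta-3}\,dr=\infty$.
\item For a general admissible $\rho_\Omega$, the bound $\rho_\Omega\le Cr^2$ gives $w/\rho_\Omega\ge c\,r^{\beta-1}>0$ on an interior subsector. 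This is incompatible with $H^1$, because arc averages of $H^1$ functions grow at most like $(\log(1/r))^{1/2}$.
\end{itemize}
So no space $V\subset H^1(\Omega)$ can contain the quotients, and your reduction breaks down at the corners.

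\textbf{What is missing} is a mechanism that never divides a general $w$ by $\rho_\Omega$ near the boundary. The paper uses a trace decomposition.
\begin{enumerate}
\item It shows $H_0^2(\Omega)\subset Y_\rho:=\overline{\rho_\Omega C^\infty(\overline\Omega)}^{\,H^2(\Omega)}$, dividing only compactly supported functions by $\rho_\Omega$.
\item It approximates the normal derivative. With $a_j=-\partial_{n_j}\rho_\Omega$, one has $\Tr_1(\rho_\Omega\phi)=-a_j\Tr\phi$ on $\Gamma_j$. For the factor built from $\delta_m^{(1)}\delta_m^{(2)}$ near the vertices, $a_j$ is Lipschitz, positive on the open edge, and comparable to the distance $d_j$ to the endpoints.
\item Hence multiplication by $a_j$ maps $H^{1/2}(\Gamma_j)$ boundedly into $H_{00}^{1/2}(\Gamma_j)$. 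By density of $C_c^\infty(\Gamma_j)$, these traces are dense in $G_\partial=\prod_j H_{00}^{1/2}(\Gamma_j)$, which is the range of $\Tr_1$ on $H^2(\Omega)\cap H_0^1(\Omega)$.
\item Writing $w-\rho_\Omega\phi_k=E_\partial r_k+z_k$, with $r_k=\Tr_1(w-\rho_\Omega\phi_k)\to0$ and $z_k\in H_0^2(\Omega)$, gives $w\in Y_\rho$.
\end{enumerate}
The degeneration of $a_j$ at the vertices is what defeats your quotient argument. It is also exactly what matches the $H_{00}^{1/2}$ compatibility of normal traces at the corners. To keep your approach, you would need a corner-adapted weighted space (for instance with a weight like $|x-V|$ on $\nabla v$) together with a new density theorem for it. That would be a substantially different argument from the one you sketch.
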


Related completeness results for trial functions incorporating
boundary factors have been established under stronger smoothness
assumptions \cite{RvachevEtAl2000}.
We use \eqref{eq:rho-density} to approximate the conductivity correction
$\sigma-A_b$ uniformly and \eqref{eq:rho-H2-density} to approximate the
potential correction $u_\sigma(f_i)-F_i$ in $H^2(\Omega)$.
These properties yield the constrained approximations in
Proposition~\ref{prop:constrained-field-approximation}.
The $H^2$ density assertion \eqref{eq:rho-H2-density} requires particular care. 
Although every function of the form $\rho_\Omega q$ has zero boundary trace, 
this alone does not establish the asserted density.
A function in $H^2(\Omega)\cap H_0^1(\Omega)$ may have a nonzero normal trace and hence cannot,
in general, be approximated in $H^2(\Omega)$ by compactly supported functions.
Direct division by $\rho_\Omega$ may also be singular near $\partial\Omega$.
The construction of
$\rho_\Omega$ and the proofs of both identities are given in
Appendix~\ref{app:boundary-factors}.

We also use the following standard neural approximation results for functions
and their derivatives \cite{Hornik1991}.
\begin{lemma}
\label{thm:neural-approximation}
Suppose that $\varphi_{\mathrm{act}}$ satisfies
\eqref{eq:activation-assumption}. Then, for every $\varepsilon>0$,
\begin{enumerate}
\item for every $q\in C(\overline\Omega)$, there exists a finite scalar neural
network whose output $N$ satisfies
\[
\|N-q\|_{C(\overline\Omega)}<\varepsilon;
\]
\item for every $q\in C^2(\overline\Omega)$, there exists a finite scalar neural
network whose output $N$ satisfies
\[
\max_{|\alpha|\leq2}
\|D^\alpha N-D^\alpha q\|_{C(\overline\Omega)}<\varepsilon.
\]
\end{enumerate}
\end{lemma}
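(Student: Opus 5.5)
The paper labels this statement a standard result and cites \cite{Hornik1991}, so the plan is to check that its hypotheses hold rather than to re-prove universal approximation. The setting is a single hidden layer network $N(x)=\sum_{k}a_k\varphi_{\mathrm{act}}(w_k\cdot x+b_k)+c$, which is a special case of the fully connected architecture in Section~\ref{sec:neural-architectures} with $L=2$, $m_1$ the number of neurons and $m_2=1$. Hornik's Theorem~3 gives density in $C^m(K)$ for compact $K$ when the activation belongs to $C^m(\mathbb R)$ and is bounded and nonconstant. Assumption~\eqref{eq:activation-assumption} supplies exactly this with $m=2$, and $\overline\Omega$ is compact. Part~1 is the $m=0$ case, i.e.\ Hornik's Theorem~2 (uniform density for bounded nonconstant continuous activations), applied with $K=\overline\Omega$.

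Part~2 needs more care, because Hornik's $C^m$ statement concerns functions defined on $\mathbb R^2$ (or on an open neighborhood), while here $q\in C^2(\overline\Omega)$ is defined only on the closure. First extend $q$ to a function $\widetilde q\in C^2_c(\mathbb R^2)$ with $D^\alpha\widetilde q=D^\alpha q$ on $\overline\Omega$ for $|\alpha|\le 2$.
\begin{itemize}
\item For a $C^{1,1}$ boundary, a $C^2$ extension across $\partial\Omega$ (Whitney extension, or local flattening combined with higher order reflection) produces $\widetilde q$. Flattening by a $C^{1,1}$ chart does not keep $C^2$ regularity, so the cleaner route is Whitney's extension theorem for $C^2$ jets on the closed set $\overline\Omega$.
\item For a convex curvilinear polygon, Whitney's theorem still applies. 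Convex sets are Whitney $1$-regular, since any two points are joined inside the set by a segment, so $C^2(\overline\Omega)$ in the sense of uniformly continuous derivatives up to order two coincides with Whitney $C^2$ jets.
\end{itemize}
Multiplying by a smooth cutoff then makes $\widetilde q$ compactly supported.

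Next apply Hornik's theorem on a compact ball $K\supset\overline\Omega$ to get a network $N$ with $\max_{|\alpha|\le2}\|D^\alpha N-D^\alpha\widetilde q\|_{C(K)}<\varepsilon$. Restricting to $\overline\Omega$ gives the claim. Networks are $C^2$ because $\varphi_{\mathrm{act}}\in C^2$, so their derivatives up to order two exist classically, and the norms in the statement make sense.

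The main obstacle is the extension step for the nonsmooth or polygonal domain. If Whitney regularity is unavailable, the fallback is to read $C^2(\overline\Omega)$ as restrictions to $\overline\Omega$ of $C^2$ functions on a neighborhood. For a Lipschitz domain this agrees with the intrinsic definition, by Stein-type or Whitney extension applied to each derivative. Once the extension is in place, the rest is a direct citation.
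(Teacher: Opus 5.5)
Your proposal is correct and takes the same approach as the paper. The paper gives no argument beyond citing Hornik's universal approximation theorems for bounded, nonconstant $C^m$ activations, applied to single hidden layer networks on the compact set $\overline\Omega$. Your Whitney extension step is sound but more than needed: Appendix~\ref{app:boundary-factors} defines $C^m(\overline\Omega)$ as restrictions of $C^m$ functions on a neighborhood of $\overline\Omega$, so your ``fallback'' reading is the intended one and multiplying by a smooth cutoff suffices.
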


We now state our approximation results for the constrained neural class.

\begin{proposition}
\label{prop:constrained-field-approximation}
Let $\sigma\in\A$, and let $B_i\in H^2(\Omega)$, $i=1,\ldots,M$, be
any fixed stream baselines satisfying $B_i(x_\star)=0$. For every
$\varepsilon>0$, there are finite neural networks with the activation
$\varphi_{\mathrm{act}}$ satisfying \eqref{eq:activation-assumption} such that
\begin{equation}
\|\sigma^\vartheta-\sigma\|_{C(\overline\Omega)}^2
+\frac1M\sum_{i=1}^M\left(
\|u_i^{\theta_i}-u_\sigma(f_i)\|_{H^2(\Omega)}^2
+\|\psi_i^{\theta_i}-\psi_\sigma(f_i)\|_{H^2(\Omega)}^2
\right)<\varepsilon.
\label{eq:hard-tuple-approximation}
\end{equation}
\end{proposition}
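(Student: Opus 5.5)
The plan is to approximate the three kinds of fields separately and then add up the errors. We fix $\varepsilon>0$ and set $\eta>0$, to be chosen small in terms of $\varepsilon$.

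\emph{Conductivity.} The function $h=\sigma-A_b$ is continuous on $\overline\Omega$ and vanishes on $\partial\Omega$, since $A_b|_{\partial\Omega}=\sigma_b=\sigma|_{\partial\Omega}$. By \eqref{eq:rho-density} of Lemma~\ref{lem:rho-density}, there is $q\in C(\overline\Omega)$ with $\|\rho_\Omega q-h\|_{C(\overline\Omega)}<\eta/2$. By part~1 of Lemma~\ref{thm:neural-approximation}, there is a network $N_\sigma^\vartheta$ with $\|N_\sigma^\vartheta-q\|_{C(\overline\Omega)}<\eta/(2\|\rho_\Omega\|_{C(\overline\Omega)}+1)$. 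Then $\|A_b+\rho_\Omega N_\sigma^\vartheta-\sigma\|_{C(\overline\Omega)}<\eta$. We have $\sigma=\clip_{[c_1,c_2]}(\sigma)$ because $\sigma\in\A$, and the clipping map is $1$-Lipschitz. Hence $\|\sigma^\vartheta-\sigma\|_{C(\overline\Omega)}<\eta$.

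\emph{Potentials.} The function $v_i=u_\sigma(f_i)-F_i$ lies in $H^2(\Omega)\cap H_0^1(\Omega)$. By \eqref{eq:rho-H2-density}, there is $q_i\in C^2(\overline\Omega)$ with $\|\rho_\Omega q_i-v_i\|_{H^2(\Omega)}<\eta/2$. By part~2 of Lemma~\ref{thm:neural-approximation}, there is a network output $U_i^{\theta_i}$ whose derivatives up to order two are uniformly close to those of $q_i$. The main technical point is that multiplication by $\rho_\Omega\in C^{1,1}(\overline\Omega)=W^{2,\infty}(\Omega)$ is bounded from $C^2(\overline\Omega)$ into $H^2(\Omega)$. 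Expanding $D^\alpha(\rho_\Omega w)$ by the Leibniz rule, with $D^2\rho_\Omega\in L^\infty$, gives
\[
\|\rho_\Omega w\|_{H^2(\Omega)}\leq C\|\rho_\Omega\|_{W^{2,\infty}(\Omega)}|\Omega|^{1/2}\max_{|\alpha|\leq 2}\|D^\alpha w\|_{C(\overline\Omega)}.
\]
Applying this with $w=U_i^{\theta_i}-q_i$ gives $\|\rho_\Omega(U_i^{\theta_i}-q_i)\|_{H^2(\Omega)}<\eta/2$ once the network is accurate enough. Therefore $\|u_i^{\theta_i}-u_\sigma(f_i)\|_{H^2(\Omega)}<\eta$.

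\emph{Stream functions.} The target for $P_i^{\theta_i}$ is $w_i=\psi_\sigma(f_i)-B_i\in H^2(\Omega)$, and $w_i(x_\star)=0$. There is no boundary constraint here, only the pinning at $x_\star$, and that is automatic in \eqref{eq:stream-network-representation}.
\begin{enumerate}
\item Since $\Omega$ is Lipschitz (both domain classes qualify), $C^\infty(\overline\Omega)$ is dense in $H^2(\Omega)$. So we first choose $p_i\in C^2(\overline\Omega)$ with $\|p_i-w_i\|_{H^2(\Omega)}<\eta/3$.
\item We then take a network $P_i^{\theta_i}$ with $\max_{|\alpha|\leq2}\|D^\alpha(P_i^{\theta_i}-p_i)\|_{C(\overline\Omega)}$ small. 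This makes $\|P_i^{\theta_i}-p_i\|_{H^2(\Omega)}<\eta/3$.
\item For the constant shift, note $|P_i^{\theta_i}(x_\star)-p_i(x_\star)|$ is small, and by the Sobolev embedding $H^2(\Omega)\hookrightarrow C(\overline\Omega)$ we have $|p_i(x_\star)|=|p_i(x_\star)-w_i(x_\star)|\leq C\eta/3$.
\end{enumerate}
A constant $c$ has $H^2(\Omega)$ norm $|c|\,|\Omega|^{1/2}$. Hence
\[
\|\psi_i^{\theta_i}-\psi_\sigma(f_i)\|_{H^2(\Omega)}=\|P_i^{\theta_i}-P_i^{\theta_i}(x_\star)-w_i\|_{H^2(\Omega)}\leq C\eta.
\]

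Collecting the three estimates, the left side of \eqref{eq:hard-tuple-approximation} is at most $C\eta^2$, where $C$ depends only on $\Omega$ and $\rho_\Omega$. Choosing $\eta$ small then gives the bound $\varepsilon$. The networks for different experiments and for the conductivity are independent, so all $M+1$ of them can be realized simultaneously. If the architecture of each field network must be common, we take the two-output network $\Phi_i^{\theta_i}$ by stacking the two scalar networks in parallel.

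The step I expect to need the most care is the potential approximation. The density identity \eqref{eq:rho-H2-density} from Lemma~\ref{lem:rho-density} is assumed here, so everything else is a routine perturbation argument. The one remaining check is the multiplier bound above, which holds because $\rho_\Omega$ has bounded second derivatives.
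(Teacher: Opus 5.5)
Your proposal is correct and follows the paper's proof essentially step for step. It uses the same three-part decomposition: uniform density of $\rho_\Omega C(\overline\Omega)$ together with the $1$-Lipschitz clipping map for the conductivity, the $H^2$ density \eqref{eq:rho-H2-density} together with the $W^{2,\infty}$ multiplier bound for the potentials, and $C^2$ density in $H^2(\Omega)$ with the embedding $H^2(\Omega)\hookrightarrow C(\overline\Omega)$ to control the shift $P_i^{\theta_i}(x_\star)$ for the stream functions. It then stacks the scalar networks into a two-output field network, and the only difference is that you fix a tolerance $\eta$ where the paper uses sequences indexed by $k$.
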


\begin{proof}
We divide the proof into four parts.

(1) For the conductivity $\sigma$, set $h=\sigma-A_b$. This function is
continuous and has zero trace. By \eqref{eq:rho-density} in Lemma~\ref{lem:rho-density}, we can choose
continuous functions $v_k$ such that $\rho_\Omega v_k\to h$ uniformly as
$k\to\infty$. By
Lemma~\ref{thm:neural-approximation}, for each $k$ we can choose a finite
network $N_k$ satisfying
\[
\|N_k-v_k\|_{C(\overline\Omega)}<k^{-1}.
\]
Since $\rho_\Omega$ is bounded,
\begin{align*}
&\|(A_b+\rho_\Omega N_k)-\sigma\|_{C(\overline\Omega)}=
\|(\rho_\Omega N_k-\rho_\Omega v_k)+(\rho_\Omega v_k - (\sigma-A_b))\|_{C(\overline\Omega)}\\
&\qquad \leq \|\rho_\Omega\|_{C(\overline\Omega)}
\|N_k-v_k\|_{C(\overline\Omega)}
+\|\rho_\Omega v_k-h\|_{C(\overline\Omega)}\to 0.
\end{align*}
Set
\[
\sigma_k=\clip_{[c_1,c_2]}\!\left(A_b+\rho_\Omega N_k\right).
\]
Since $\clip_{[c_1,c_2]}(\sigma)=\sigma$ and the clipping map has Lipschitz
constant one, we obtain
\[
\|\sigma_k-\sigma\|_{C(\overline\Omega)}=\|\clip_{[c_1,c_2]}(A_b+\rho_\Omega N_k)-\clip_{[c_1,c_2]}(\sigma)\|_{C(\overline\Omega)}
\leq
\|(A_b+\rho_\Omega N_k)-\sigma\|_{C(\overline\Omega)}
\to 0.
\]

(2) To approximate the potential $u_\sigma(f_i)$ for each $i$, we set
$w_i=u_\sigma(f_i)-F_i$. Both $u_\sigma(f_i)$ and $F_i$ have trace $f_i$,
so $w_i\in H^2(\Omega)\cap H_0^1(\Omega)$. By
\eqref{eq:rho-H2-density} in Lemma~\ref{lem:rho-density}, choose $q_{i,k}\in C^2(\overline\Omega)$ such that,
as $k\to\infty$,
\begin{equation*}
\rho_\Omega q_{i,k}\to w_i
\qquad\text{in }H^2(\Omega).
\end{equation*}
For each $k$, Lemma~\ref{thm:neural-approximation} provides a finite network
$U_{i,k}$ satisfying
$\|U_{i,k}-q_{i,k}\|_{C^2(\overline\Omega)}<k^{-1}$. 
Set
\[
u_{i,k}=F_i+\rho_\Omega U_{i,k}.
\]
Since
$\rho_\Omega\in W^{2,\infty}(\Omega)$, it follows that
\begin{align*}
&\|u_{i,k}-u_\sigma(f_i)\|_{H^2(\Omega)}=\|\rho_\Omega U_{i,k}-w_i\|_{H^2(\Omega)}
=\|\rho_\Omega U_{i,k} -\rho_\Omega q_{i,k}+\rho_\Omega q_{i,k}-w_i\|_{H^2(\Omega)}\\
&\qquad\leq
\|\rho_\Omega(U_{i,k}-q_{i,k})\|_{H^2(\Omega)}
+\|\rho_\Omega q_{i,k}-w_i\|_{H^2(\Omega)}\\
&\qquad\leq
C\|\rho_\Omega\|_{W^{2,\infty}(\Omega)}
\|U_{i,k}-q_{i,k}\|_{C^2(\overline\Omega)}
+\|\rho_\Omega q_{i,k}-w_i\|_{H^2(\Omega)}\to0.
\end{align*}

(3) For the stream function $\psi_\sigma(f_i)$, set
$z_i=\psi_\sigma(f_i)-B_i\in H^2(\Omega)$. Then $z_i(x_\star)=0$.
Since $C^2(\overline\Omega)$ is dense in $H^2(\Omega)$ on the given domain,
we can choose $p_{i,k}\in C^2(\overline\Omega)$ such that
$p_{i,k}\to z_i$ in $H^2(\Omega)$ as $k\to\infty$.
By Lemma~\ref{thm:neural-approximation}, choose a finite network $P_{i,k}$ satisfying
$\|P_{i,k}-p_{i,k}\|_{C^2(\overline\Omega)}<k^{-1}$. Set
\[
\psi_{i,k}(x)=B_i+P_{i,k}(x)-P_{i,k}(x_\star).
\]
We first estimate
$P_{i,k}(x_\star)$. Using the embedding
$H^2(\Omega)\hookrightarrow C(\overline{\Omega})$ and $z_i(x_\star)=0$, we obtain
\begin{align*}
&|P_{i,k}(x_\star)|=|P_{i,k}(x_\star)-z_i(x_\star)|\leq |P_{i,k}(x_\star)-p_{i,k}(x_\star)|+|p_{i,k}(x_\star)-z_i(x_\star)|\\
&\qquad \leq  \|P_{i,k}(x)-p_{i,k}(x)\|_{C(\overline\Omega)}+\|p_{i,k}(x)-z_i(x)\|_{C(\overline\Omega)}\leq k^{-1}+C\|p_{i,k}-z_i\|_{H^2(\Omega)}\to0.
\end{align*}
Therefore
\begin{align*}
&\|\psi_{i,k}-\psi_\sigma(f_i)\|_{H^2(\Omega)}=\|P_{i,k}(x)-P_{i,k}(x_\star)-z_i(x)\|_{H^2(\Omega)}\leq \|P_{i,k}-z_i\|_{H^2(\Omega)}+\|P_{i,k}(x_\star)\|_{H^2(\Omega)}\\
&\qquad \leq \|P_{i,k}-p_{i,k}\|_{H^2(\Omega)}
+\|p_{i,k}-z_i\|_{H^2(\Omega)}
+|P_{i,k}(x_\star)|\|1\|_{H^2(\Omega)}\to0.
\end{align*}

(4) Parts (2) and (3) establish the two field convergences in
\eqref{eq:hard-tuple-approximation}. The two scalar approximations can be
placed in the two output channels of one network. Since $M$ is finite,
a single value of $k$ makes all three terms smaller than the prescribed
tolerance.
\end{proof}

\begin{proposition}
\label{prop:neural-zero-loss-approximation}
Let $\sigma\in\mathcal M$, and let $B_i\in H^2(\Omega)$,
$i=1,\ldots,M$, be any fixed stream baselines satisfying
$B_i(x_\star)=0$. For every $\eta>0$, there are finite neural network architectures,
$R>0$, and $\Theta\in\mathcal K_R$ such that
\begin{equation}
\J(\Theta)=\mathcal L(W^\Theta)<\eta.
\label{eq:best-loss-approaches-zero}
\end{equation}
\end{proposition}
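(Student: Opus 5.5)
The plan is to show that the loss $\mathcal L$ depends continuously on the trial tuple in the norms of Proposition~\ref{prop:constrained-field-approximation}, and then to apply that proposition to the exact forward tuple of $\sigma\in\mathcal M$.

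Fix $\sigma\in\mathcal M$. By Proposition~\ref{prop:zero-loss-characterization}, or directly, the tuple $W^\ast=(\sigma,u_\sigma(f_1),\psi_\sigma(f_1),\ldots)$ has $s_i(W^\ast)=0$ in $\Omega$ and on $\partial\Omega$. Moreover $\partial_\tau\psi_\sigma(f_i)=\Lambda_\sigma f_i=g_i$, so $\mathcal L(W^\ast)=0$.

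For $\varepsilon>0$, Proposition~\ref{prop:constrained-field-approximation} gives finite networks with parameters $\Theta$ satisfying \eqref{eq:hard-tuple-approximation}. Choose $R$ to be the largest absolute value among the finitely many entries of $\Theta$, so that $\Theta\in\mathcal K_R$. It remains to bound $\mathcal L(W^\Theta)$ by a quantity tending to zero with $\varepsilon$.

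Write $\delta u_i=u_i^{\theta_i}-u_\sigma(f_i)$, $\delta\psi_i=\psi_i^{\theta_i}-\psi_\sigma(f_i)$ and $\delta\sigma=\sigma^\vartheta-\sigma$. Since the exact residual vanishes,
\[
s_i(W^\Theta)=\nabla^\perp\delta\psi_i-\sigma^\vartheta\nabla\delta u_i-\delta\sigma\,\nabla u_\sigma(f_i).
\]
The interior term follows from
\[
\|s_i\|_{L^2(\Omega)}\le\|\delta\psi_i\|_{H^1}+c_2\|\delta u_i\|_{H^1}+\|\delta\sigma\|_{C}\|u_\sigma(f_i)\|_{H^1}.
\]
The data term follows from $d_i^0=\partial_\tau\delta\psi_i$ and the trace theorem applied to $\nabla\delta\psi_i\in H^1$. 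This gives $\|d_i^0\|_{L^2(\partial\Omega)}\le C\|\delta\psi_i\|_{H^2(\Omega)}$. In the polygonal case the trace is taken edgewise, which is still bounded into $L^2$.

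The boundary term is the main obstacle. The product $\delta\sigma\,\nabla u_\sigma(f_i)$ is controlled only in $C(\overline\Omega)$ for $\delta\sigma$, not in $W^{1,\infty}$, so the $H^1$ trace theorem does not apply to it directly. I would handle this in two steps.

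First, take traces of the factors separately: $\Tr(\delta\sigma\,\nabla u_\sigma(f_i))=\delta\sigma|_{\partial\Omega}\,\Tr\nabla u_\sigma(f_i)$. Justifying this product rule for a Lipschitz factor times an $H^1$ function is standard by density. The $L^2(\partial\Omega)$ norm of this trace is then at most $\|\delta\sigma\|_{C}\|\Tr\nabla u_\sigma(f_i)\|_{L^2(\partial\Omega)}$.

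Second, and more simply, $\delta\sigma=0$ on $\partial\Omega$, because both $\sigma^\vartheta$ and $\sigma$ equal $\sigma_b$ there. So this boundary contribution vanishes identically.

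The remaining boundary pieces are $\Tr\nabla^\perp\delta\psi_i$ and $\sigma_b\Tr\nabla\delta u_i$. Both are bounded by $C\|\delta\psi_i\|_{H^2}$ and $Cc_2\|\delta u_i\|_{H^2}$ through the trace theorem on $H^1$, applied edgewise in the polygonal case.

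Squaring, averaging over $i$, and using \eqref{eq:hard-tuple-approximation} then gives
\[
\mathcal L(W^\Theta)\le C\bigl(1+\max_i\|u_\sigma(f_i)\|_{H^1}^2\bigr)\varepsilon ,
\]
with $C$ depending on $\Omega,c_2,\lambda_b,\lambda_q,M$. Choosing $\varepsilon$ small enough yields $\J(\Theta)<\eta$, which proves \eqref{eq:best-loss-approaches-zero}.
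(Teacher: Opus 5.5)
Your proof is correct and follows essentially the same route as the paper. The paper also splits $s_i$ into three terms, notes that $\sigma^\vartheta-\sigma$ vanishes on $\partial\Omega$ so that only the field errors enter the boundary residual, and bounds the boundary and data terms by the trace theorem. The only cosmetic difference is that the paper takes $R=1+\|\Theta\|_\infty$: this guarantees $R>0$ even in the degenerate case $\Theta=0$, where your choice of the largest absolute entry would give $R=0$.
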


\begin{proof}
Apply Proposition~\ref{prop:constrained-field-approximation} with tolerances
tending to zero, and denote the resulting trial fields by
$\sigma_k$, $u_{i,k}$, and $\psi_{i,k}$. Then, as $k\to\infty$,
\[
\sigma_k\to\sigma
\quad\text{in }C(\overline\Omega),
\qquad
u_{i,k}\to u_\sigma(f_i)
\quad\text{in }H^2(\Omega),\qquad
\psi_{i,k}\to\psi_\sigma(f_i)
\quad\text{in }H^2(\Omega),
\]
for each $i=1,\ldots,M$. The clipped conductivity map gives that
$\|\sigma_k\|_{C(\overline\Omega)}\leq c_2$ for every $k$.
By $\nabla^\perp\psi_\sigma(f_i)=\sigma\nabla u_\sigma(f_i)$, we can write
\[
s_i(W^{\Theta_k})=\nabla^\perp\psi_{i,k}
-\sigma_k \nabla u_{i,k}
=\nabla^\perp(\psi_{i,k}-\psi_\sigma(f_i))
-\sigma_k\nabla(u_{i,k}-u_\sigma(f_i))-(\sigma_k-\sigma)\nabla u_\sigma(f_i).
\]
Therefore,
\begin{align*}
\|s_i(W^{\Theta_k})\|_{L^2(\Omega)}
&\leq \|\nabla(\psi_{i,k}-\psi_\sigma(f_i))\|_{L^2(\Omega)}
+\|\sigma_k\|_{C(\overline\Omega)}
 \|\nabla(u_{i,k}-u_\sigma(f_i))\|_{L^2(\Omega)}\\
&\quad+\|\sigma_k-\sigma\|_{C(\overline\Omega)}
 \|\nabla u_\sigma(f_i)\|_{L^2(\Omega)}
\to 0.
\end{align*}
On the boundary, $\Tr\sigma_k=\Tr\sigma=\sigma_b$, so
\[
\Tr s_i(W^{\Theta_k})
=\Tr\!\left(\nabla^\perp(\psi_{i,k}-\psi_\sigma(f_i))\right)
-\sigma_b\,\Tr\!\left(\nabla(u_{i,k}-u_\sigma(f_i))\right).
\]
The embedding $\mathcal H^{1/2}(\partial\Omega)\hookrightarrow L^2(\partial\Omega)$
and the trace theorem imply
\begin{align*}
&\|\Tr s_i(W^{\Theta_k})\|_{L^2(\partial\Omega)}\leq C\|\Tr s_i(W^{\Theta_k})\|_{\mathcal H^{1/2}(\partial\Omega)}\\
\leq &C\|\ \nabla^\perp(\psi_{i,k}-\psi_\sigma(f_i))\|_{H^1(\Omega)}+C\|\nabla(u_{i,k}-u_\sigma(f_i))\|_{H^1(\Omega)}\to 0.
\end{align*}
Since $\sigma\in\mathcal M$, the exact current satisfies
\[
g_i=\Lambda_\sigma f_i=\partial_\tau\psi_\sigma(f_i).
\]
Thus
\[
d_i^0(W^{\Theta_k})
=\partial_\tau\psi_{i,k}-g_i
=\partial_\tau(\psi_{i,k}-\psi_\sigma(f_i)),
\]
and therefore
\[
\|d_i^0(W^{\Theta_k})\|_{L^2(\partial\Omega)}\leq \|d_i^0(W^{\Theta_k})\|_{\mathcal H^{1/2}(\partial\Omega)}
\leq C\|\psi_{i,k}-\psi_\sigma(f_i)\|_{H^2(\Omega)}\to 0.
\]
It follows that $\mathcal L(W^{\Theta_k})\to 0$. Choose $k$ so that the loss is less
than $\eta$. The corresponding networks have a finite parameter vector
$\Theta_k\in\R^{N_{\mathrm{par},k}}$, where $N_{\mathrm{par},k}$ is the total
number of weights and biases in the networks chosen for the $k$th
approximation. Setting
$R_k=1+\|\Theta_k\|_\infty$ places this vector in the compact box
$[-R_k,R_k]^{N_{\mathrm{par},k}}$, which proves
\eqref{eq:best-loss-approaches-zero}.
\end{proof}

\subsection{The sampled problem and convergence}
\label{sec:discrete-inexact-loss}
We approximate the three integrals defining $\mathcal L^\delta$ in
\eqref{eq:stream-residual-loss} by weighted sums. The residuals are evaluated
at collocation points, with interior points used for $s_i$ and boundary
points for $\Tr s_i$ and $d_i^\delta$. The two boundary point sets may
coincide. We consider both Monte Carlo sampling and deterministic quadrature.
We first establish
sampling estimates and the existence of minimizers for the sampled
problems, then derive error estimates and convergence results for the
associated conductivity reconstructions.

Define
\[
\mathcal I_{\Omega,N_\Omega}[a]:=\sum_{j=1}^{N_\Omega}w_{\Omega,j}\, a(x^{(j)}),\qquad
\mathcal I_{b,N_b}[b]:=\sum_{\ell=1}^{N_b}w_{b,\ell}\, b(y^{(\ell)}),\qquad
\mathcal I_{q,N_q}[c]:=\sum_{r=1}^{N_q}w_{q,r}\, c(z^{(r)}),
\]
where $N_\Omega$, $N_b$, 
and $N_q$ are the respective point counts, $w_{\Omega,j}$, $w_{b,\ell}$, and $w_{q,r}$ are nonnegative weights,
and $x^{(j)}\in\Omega$, $y^{(\ell)}\in\partial\Omega$, and
$z^{(r)}\in\partial\Omega$ are the corresponding sampling points. 
The points and weights are specified for each rule below. With
$\mathbf N=(N_\Omega,N_b,N_q)$, the sampled loss is
\begin{equation}
\J_{\mathbf N}^\delta(\Theta)
=\frac1M\sum_{i=1}^M\Big(
\mathcal I_{\Omega,N_\Omega}[|s_i(W^\Theta)|^2]
+\lambda_b\, \mathcal I_{b,N_b}[|\Tr s_i(W^\Theta)|^2]
+\lambda_q\, \mathcal I_{q,N_q}[|d_i^\delta(W^\Theta)|^2]
\Big).
\label{eq:theoretical-sampled-core}
\end{equation}
On the compact parameter set $\mathcal K_R$, define the uniform sampling error
\begin{equation*}
\varepsilon_{\mathrm{sam},\mathbf N}^\delta
:=\sup_{\Theta\in\mathcal K_R}
|\J_{\mathbf N}^\delta(\Theta)-\J^\delta(\Theta)|.
\end{equation*}

We first consider Monte Carlo sampling. Let $(x^{(j)})_{j\geq1}$ consist of
independent uniform samples from $\Omega$, and let each of
$(y^{(\ell)})_{\ell\geq1}$ and $(z^{(r)})_{r\geq1}$ consist of independent uniform
samples in arclength from $\partial\Omega$. Set
\[
w_{\Omega,j}=\frac{|\Omega|}{N_\Omega},\quad 
w_{b,\ell}=\frac{|\partial\Omega|}{N_b}, \quad \text{and}\quad
w_{q,r}=\frac{|\partial\Omega|}{N_q}.
\]

\begin{proposition}
\label{prop:uniform-sampling}
For the Monte Carlo sampling defined above, the following statements hold.
\begin{enumerate}[label=(\arabic*)]
\item We have
\begin{equation}
\varepsilon_{\mathrm{sam},\mathbf N}^\delta\to 0
\qquad\text{almost surely as }
\min\{N_\Omega,N_b,N_q\}\to\infty.
\label{eq:proved-uniform-sampling}
\end{equation}
\item For every fixed $\Theta\in\mathcal K_R$, the Monte Carlo loss is unbiased and
\begin{equation}
\mathbb E\bigl[\J_{\mathbf N}^\delta(\Theta)\bigr]
=\J^\delta(\Theta),\qquad
\operatorname{Var}\bigl(\J_{\mathbf N}^\delta(\Theta)\bigr)
\leq C_{R,\delta}
\left(N_\Omega^{-1}+N_b^{-1}+N_q^{-1}\right).
\label{eq:mc-fixed-moments}
\end{equation}
\item For every finite $\mathbf N$, $\J_{\mathbf N}^\delta$ is continuous on
$\mathcal K_R$ and attains a minimum.
\end{enumerate}
\end{proposition}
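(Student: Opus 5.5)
We prove the three statements separately. In each case the sampled loss is viewed as an empirical average of integrands indexed by $\Theta\in\mathcal K_R$. The key structural fact is that the interior residual is continuous on $\overline\Omega$, since $u_i^{\theta_i},\psi_i^{\theta_i}\in H^2(\Omega)$ only give an $H^1$ residual. It is therefore convenient to write
\[
a_\Theta=|s_i(W^\Theta)|^2,\qquad b_\Theta=|\Tr s_i(W^\Theta)|^2,\qquad c_\Theta=|d_i^\delta(W^\Theta)|^2,
\]
and to separate the fixed data parts ($F_i$, $B_i$, $g_i^\delta$, $A_b$) from the network parts. The network parts depend continuously on $\Theta$ in $C^2(\overline\Omega)$, as in Proposition~\ref{prop:fixed-architecture-minimum}.

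\emph{Step 1 (moments, item (2)).} For fixed $\Theta$, unbiasedness follows from linearity of expectation: with $w=|\Omega|/N_\Omega$ and uniform samples, $\mathbb E[w\,a_\Theta(x^{(j)})]=\int_\Omega a_\Theta$, and similarly on the boundary. Independence gives
\[
\operatorname{Var}=\frac{|\Omega|^2}{N_\Omega}\operatorname{Var}(a_\Theta(x))+\cdots\le \frac{|\Omega|}{N_\Omega}\int_\Omega a_\Theta^2+\cdots .
\]
So we need $a_\Theta\in L^2(\Omega)$, meaning $s_i\in L^4$, and $b_\Theta,c_\Theta\in L^2(\partial\Omega)$, meaning $\Tr s_i,d_i^\delta\in L^4(\partial\Omega)$. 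In 2D these follow from Sobolev embeddings: $H^1(\Omega)\hookrightarrow L^4(\Omega)$ and $H^{1/2}(\partial\Omega)\hookrightarrow L^4(\partial\Omega)$, edgewise in the polygonal case. The bound must be uniform over $\mathcal K_R$, and this uses the continuity of $\Theta\mapsto s_i$ in $H^1$ and $\Theta\mapsto d_i^\delta$ in $\mathcal H^{1/2}$. These maps are continuous because $\sigma^\vartheta$ depends Lipschitz-continuously (in $W^{1,\infty}$-bounded fashion) on $\Theta$. Compactness of $\mathcal K_R$ then gives a finite $C_{R,\delta}$. One subtlety: $\Theta\mapsto\sigma^\vartheta$ is continuous in $C$ but perhaps not in $W^{1,\infty}$ because of clipping. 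For the $L^4$ norms only continuity of $\sigma^\vartheta$ in $C$ is needed, together with $\nabla u_i\in H^1\subset L^4$, so this is enough.

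\emph{Step 2 (continuity, item (3)).} For finite $\mathbf N$, point evaluation at finitely many points is the issue, since $H^2$ fields and $L^2$-type data are evaluated pointwise. The sampled integrands are pointwise well defined only if $F_i$, $B_i$, and $g_i^\delta$ are continuous at the sample points. I would interpret the fixed liftings and data as fixed representatives; for the network parts, continuity in $\Theta$ holds pointwise in $C^1$. At a fixed point $x$, $\Theta\mapsto s_i(W^\Theta)(x)$ is continuous because clipping is continuous and the raw network outputs together with their first derivatives are continuous in $\Theta$ uniformly on $\overline\Omega$. Hence $\J_{\mathbf N}^\delta$ is a finite sum of continuous functions, and Weierstrass on $\mathcal K_R$ gives a minimizer.

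\emph{Step 3 (uniform a.s.\ convergence, item (1)), the main obstacle.} I would use a uniform strong law of large numbers on the compact parameter space, in the Glivenko--Cantelli form of Jennrich/Le Cam. It requires (i) $\Theta\mapsto a_\Theta(x)$ continuous for a.e.\ $x$, which holds by Step 2, and (ii) an integrable envelope $\sup_{\Theta\in\mathcal K_R}a_\Theta(x)\le G(x)$ with $G\in L^1$. For the envelope I would bound
\[
|s_i|\le |\nabla B_i|+|\nabla F_i|\,c_2+C_R(1+|\nabla\rho_\Omega|),
\]
using uniform $C^1$ bounds of the networks on $\mathcal K_R$. This gives
\[
G\lesssim |\nabla B_i|^2+|\nabla F_i|^2+C_R\in L^1(\Omega),
\]
and similarly on the boundary, with $|\Tr\nabla B_i|^2,|\Tr\nabla F_i|^2,|g_i^\delta|^2\in L^1(\partial\Omega)$. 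The theorem then gives almost sure convergence of each of the three empirical averages uniformly in $\Theta$. There are finitely many $i$, and the three independent sequences each converge a.s.; intersecting the full-measure events yields \eqref{eq:proved-uniform-sampling} as $\min\{N_\Omega,N_b,N_q\}\to\infty$. The delicate point is verifying the envelope and a.e.\ continuity despite the merely $H^2$ fixed parts. If the off-the-shelf uniform SLLN is not applicable, I would instead prove it directly: cover $\mathcal K_R$ by finitely many small balls, use the modulus-of-continuity envelope $\sup_{|\Theta-\Theta'|<r}|a_\Theta-a_{\Theta'}|$, which tends to $0$ in $L^1$ by dominated convergence, and apply the pointwise SLLN at the ball centers and to the envelopes.
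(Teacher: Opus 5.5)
Your proposal is correct and follows essentially the same route as the paper: unbiasedness plus an $L^4$-based variance bound via $H^1(\Omega)\hookrightarrow L^4(\Omega)$ and $H^{1/2}\hookrightarrow L^4$ on the boundary, pointwise continuity in $\Theta$ with compactness of $\mathcal K_R$ for the minimizer, and Jennrich's uniform strong law with an integrable envelope built from $|\nabla F_i|^2$, $|\nabla B_i|^2$, and $|g_i^\delta|^2$. There are two harmless slips: your opening claim that the interior residual is continuous on $\overline\Omega$ is false (it is only $H^1$, and you later correctly treat $\nabla F_i,\nabla B_i$ as merely $L^2$), and writing the variance of the total loss as an exact sum presumes the three point families are independent, which the paper avoids by using a factor-$3$ bound (the paper even allows the two boundary point sets to coincide).
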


\begin{proof} 
The proof is given in Appendix~\ref{app:uniform-sampling}.
\end{proof}

We next consider deterministic quadrature. Let $\mathcal T_\Omega$ be a fitted
partition of $\Omega$ into $N_\Omega$ cells. Let $\mathcal E_b$ and $\mathcal E_q$ be arclength partitions of
$\partial\Omega$ into $N_b$ and $N_q$ intervals, respectively. In the
curvilinear polygonal case, these partitions are formed separately on each
edge. We choose one quadrature point in each cell and interval, with weights
given by the corresponding areas $|K|$ and arclengths $|E|$. In the
curvilinear polygonal case, boundary quadrature points are not placed at
the vertices. Define
\[
h_\Omega=\max_{K\in\mathcal T_\Omega}\operatorname{diam}(K),
\qquad
h_b=\max_{E\in\mathcal E_b}|E|,
\qquad
h_q=\max_{E\in\mathcal E_q}|E|.
\]
We use partitions satisfying
\[
h_\Omega\leq C_{\rm pt}N_\Omega^{-1/2},
\qquad
h_b\leq C_{\rm pt}N_b^{-1},
\qquad
h_q\leq C_{\rm pt}N_q^{-1},
\]
where $C_{\rm pt}$ is independent of the point counts.

\begin{proposition}
\label{prop:det-midpoint}
Suppose that $f_i\in C^{1,1}(\partial\Omega)$ and
$g_i^\delta\in C^{0,1}(\partial\Omega)$, $i=1,\ldots,M$.
In the curvilinear polygonal case, these boundary regularity conditions are
understood edgewise. The fixed potential liftings $F_i$ and stream
baselines $B_i$ are chosen in $W^{2,\infty}(\Omega)$.
For the deterministic quadrature defined above, the following statements hold.
\begin{enumerate}[label=(\arabic*)]
\item We have
\begin{equation}
\varepsilon_{\mathrm{sam},\mathbf N}^\delta
\leq C(h_\Omega+h_b+h_q)
\leq C
\bigl(N_\Omega^{-1/2}+N_b^{-1}+N_q^{-1}\bigr),
\label{eq:deterministic-sampling-consistency}
\end{equation}
where the constant $C$ is independent of the point counts.
\item For every finite $\mathbf N$, $\J_{\mathbf N}^\delta$ is continuous on
$\mathcal K_R$ and attains a minimum.
\end{enumerate}
\end{proposition}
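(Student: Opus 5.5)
The proof reduces the uniform quadrature error to a single Lipschitz estimate for one-point rules. Whenever $a$ is Lipschitz on a cell $K$ with constant $\mathrm{Lip}(a)$, and $x_K\in K$ is the chosen point,
\[
\Bigl|\int_K a\,dx-|K|\,a(x_K)\Bigr|\le |K|\,\mathrm{Lip}(a)\,\operatorname{diam}(K).
\]
Summing over $\mathcal T_\Omega$ gives an interior error of at most $|\Omega|\,\mathrm{Lip}(a)\,h_\Omega$. The same argument in arclength gives a boundary error of at most $|\partial\Omega|\,\mathrm{Lip}(b)\,h_b$, and similarly $|\partial\Omega|\,\mathrm{Lip}(c)\,h_q$. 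In the polygonal case this is done edge by edge, using the Lipschitz constant of the integrand on the closure of each open edge. It therefore suffices to show that $|s_i(W^\Theta)|^2$, $|\Tr s_i(W^\Theta)|^2$ and $|d_i^\delta(W^\Theta)|^2$ are Lipschitz, on $\overline\Omega$ and on each boundary edge respectively, with constants bounded uniformly in $\Theta\in\mathcal K_R$. Combined with the partition assumptions $h_\Omega\le C_{\rm pt}N_\Omega^{-1/2}$ and so on, this yields \eqref{eq:deterministic-sampling-consistency}.

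\textbf{Uniform bounds on the building blocks.} As in the proof of Proposition~\ref{prop:fixed-architecture-minimum}, the raw outputs $N_\sigma^\vartheta$, $U_i^{\theta_i}$, $P_i^{\theta_i}$ depend continuously on $\Theta$ in $C^2(\overline\Omega)$, because $\varphi_{\mathrm{act}}\in C^2$. Compactness of $\mathcal K_R$ then gives a uniform bound $\|N_\sigma^\vartheta\|_{C^2}+\sum_i(\|U_i^{\theta_i}\|_{C^2}+\|P_i^{\theta_i}\|_{C^2})\le C_R$.

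\textbf{Lipschitz bounds for the fields.}
\begin{itemize}
\item Conductivity: $\rho_\Omega\in C^{1,1}$ and $A_b\in C^{0,1}$, so $A_b+\rho_\Omega N_\sigma^\vartheta$ is Lipschitz with a uniform constant. Clipping is $1$-Lipschitz, so $\sigma^\vartheta$ is uniformly bounded and uniformly Lipschitz.
\item Potential: $\nabla u_i^{\theta_i}=\nabla F_i+U_i^{\theta_i}\nabla\rho_\Omega+\rho_\Omega\nabla U_i^{\theta_i}$. Since $F_i\in W^{2,\infty}$ and $\nabla\rho_\Omega\in W^{1,\infty}$, this is a sum of products of bounded, uniformly Lipschitz functions.
\item Stream function: $\nabla^\perp\psi_i^{\theta_i}=\nabla^\perp B_i+\nabla^\perp P_i^{\theta_i}$ is likewise bounded and uniformly Lipschitz.
\end{itemize}
Products and squares of bounded Lipschitz functions are Lipschitz, so $|s_i|^2$ has a uniform Lipschitz constant.

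One point needs care. $W^{1,\infty}(\Omega)$ gives a Lipschitz bound only with respect to the intrinsic path distance. For a convex polygon this equals the Euclidean distance. For a $C^{1,1}$ domain I will invoke quasiconvexity, so the Euclidean Lipschitz constant changes by at most a domain constant. This also ensures the point values of $\nabla F_i$ and $\nabla B_i$ used by the quadrature are well defined, via their continuous representatives.

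\textbf{Boundary integrands.} $\Tr s_i$ is the restriction of a uniformly Lipschitz function on $\overline\Omega$, and arclength is comparable to Euclidean distance along each $C^{1,1}$ edge, so the boundary estimate follows. For the data term, $\partial_\tau\psi_i^{\theta_i}=\nabla^\perp\psi_i^{\theta_i}\cdot n$, and the normal $n$ is Lipschitz on each $C^{1,1}$ edge. With $g_i^\delta\in C^{0,1}$, the residual $d_i^\delta$ is bounded and edgewise Lipschitz with a uniform constant. The hypothesis $f_i\in C^{1,1}$ is used only to guarantee that a lifting $F_i\in W^{2,\infty}$ exists, which is what the statement assumes.

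Placing quadrature points off the vertices ensures all point evaluations of $n$ and $\tau$ are defined. I expect this bookkeeping — the uniform Lipschitz constants, quasiconvexity, and the edgewise treatment at corners — to be the main technical obstacle.

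\textbf{Part (2).} $\J_{\mathbf N}^\delta(\Theta)$ is a finite weighted sum of point values of the fields. Each point value depends continuously on $\Theta$, because the network outputs and their first derivatives at fixed points are continuous in $\Theta$, the clipping map is continuous, and $F_i$, $B_i$, $g_i^\delta$ are fixed. Hence $\J_{\mathbf N}^\delta$ is continuous on the compact set $\mathcal K_R$ and attains its minimum.
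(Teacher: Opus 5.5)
Your proposal is correct and follows essentially the same route as the paper. Both arguments obtain uniform-in-$\Theta$ Lipschitz bounds for the three integrands from four ingredients: the $C^2$ network outputs on the compact box $\mathcal K_R$, $\rho_\Omega\in C^{1,1}(\overline\Omega)$ and $A_b\in C^{0,1}(\overline\Omega)$, the $W^{2,\infty}$ liftings, and the $1$-Lipschitz clipping. They then sum the cellwise one-point estimate $\bigl|\int_K a\,dx-|K|\,a(x_K)\bigr|\le L\,|K|\operatorname{diam}(K)$ (and its arclength analogue on edges), and use continuity plus compactness for part (2). Your extra remarks fill in details the paper leaves implicit: that $W^{1,\infty}$ bounds give Euclidean Lipschitz bounds on $\overline\Omega$ by convexity or quasiconvexity, and that $\tau$ is edgewise Lipschitz on $C^{1,1}$ edges.
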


\begin{proof} 
The proof is given in Appendix~\ref{app:uniform-sampling}.
\end{proof}

\begin{remark}
\label{rem:deterministic-lifting-regularity}
The stronger boundary regularity in Proposition~\ref{prop:det-midpoint} is
required only for deterministic quadrature. It allows the potential liftings
$F_i$ and stream baselines $B_i$ to be chosen in $W^{2,\infty}(\Omega)$,
so that the loss integrands are Lipschitz in the spatial variable and the first order 
quadrature estimate applies. Throughout the subsequent analysis, we assume the hypotheses of
Proposition~\ref{prop:det-midpoint} whenever deterministic quadrature is used.
\end{remark}

As the measurement noise tends to zero, we next choose the neural classes
and sample counts so that the neural approximation error and sampling error
also tend to zero. Let
$\delta_n\to0^+$, $g_i^{\delta_n}$ satisfy \eqref{eq:noise-model}, and
$B_{i,n}\in H^2(\Omega)$ be the corresponding fixed stream baselines.
Fix $\bar\sigma\in\mathcal M$. Then
\[
g_i=\Lambda_{\bar\sigma}f_i
=\partial_\tau\psi_{\bar\sigma}(f_i),
\qquad i=1,\ldots,M.
\]
Let $\eta_n\to 0^+$. For each $n$, applying
Proposition~\ref{prop:neural-zero-loss-approximation} to $\bar\sigma$ with
baselines $B_{i,n}$ and tolerance $\eta_n$, we obtain a finite neural network
architecture, $R_n>0$, and a parameter vector $\Theta_n^0$ satisfying
\[
\Theta_n^0\in\mathcal K_n:=[-R_n,R_n]^{N_{\mathrm{par},n}},
\qquad
\J(\Theta_n^0)<\eta_n.
\]
For $\Theta_n\in\mathcal K_n$, let $W_n^{\Theta_n}$ denote the corresponding
trial tuple, and write $\J_n(\Theta_n)=\mathcal L(W_n^{\Theta_n})$ for the exact data
loss on this neural class. Define the neural approximation error by
\begin{equation}
e_{\mathrm{NN},n}
:=\min_{\Theta_n\in\mathcal K_n}\J_n(\Theta_n).
\label{eq:NN-reference-approximation-error}
\end{equation}
By Proposition~\ref{prop:fixed-architecture-minimum}, $\J_n$ is continuous on the compact set ${\mathcal K_n}$, 
so the minimum is attained. Since $\Theta_n^0\in\mathcal K_n$ and $\J_n(\Theta_n^0)<\eta_n$, we have
\[
0\leq e_{\mathrm{NN},n}\leq\J_n(\Theta_n^0)<\eta_n.
\]
Therefore,
\[
e_{\mathrm{NN},n}\to0
\qquad\text{as }n\to\infty.
\]

For each $n$, we use the parameter set $\mathcal K_n$ in place of
$\mathcal K_R$ defined in \eqref{eq:parameter-box}. For
$\Theta_n\in\mathcal K_n$, define
\begin{equation*}
\J_n^{\delta_n}(\Theta_n)=\mathcal L^{\delta_n}(W_n^{\Theta_n}),
\end{equation*}
where $\mathcal L^{\delta_n}$ is given by \eqref{eq:stream-residual-loss} with $\delta=\delta_n$ and 
data $g_i^{\delta_n}$. For any
triple of point counts $\mathbf N$, let $\J_{n,\mathbf N}^{\delta_n}$ denote
the sampled loss on $\mathcal K_n$ obtained from
\eqref{eq:theoretical-sampled-core} with trial tuple $W_n^{\Theta_n}$. Define the corresponding 
uniform sampling error by
\begin{equation*}
\varepsilon_{\mathrm{sam},n,\mathbf N}^{\delta_n}
=\sup_{\Theta_n\in\mathcal K_n}
\left|\J_{n,\mathbf N}^{\delta_n}(\Theta_n)
-\J_n^{\delta_n}(\Theta_n)\right|.
\end{equation*}

Propositions~\ref{prop:uniform-sampling} and~\ref{prop:det-midpoint} give
sampling convergence for each fixed neural class and noise level. The neural
class and noise level now vary with $n$. However, for the convergence
analysis below, we need the sampling errors to tend to zero as $n\to\infty$.
The following proposition provides the required choice of point counts for
each $n$.

\begin{proposition}
\label{prop:sampling-selection}
For either rule, there exist triples
$\mathbf N_n=(N_{\Omega,n},N_{b,n},N_{q,n})$ such that, as $n\to\infty$,
\[
\min\{N_{\Omega,n},N_{b,n},N_{q,n}\}\to\infty
\quad\text{and}\quad
\varepsilon_{\mathrm{sam},n,\mathbf N_n}^{\delta_n}\to0.
\]
The second limit holds almost surely for Monte Carlo sampling and deterministically for
deterministic quadrature.
\end{proposition}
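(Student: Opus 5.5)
The plan is a diagonal selection. For each fixed $n$, the neural class $\mathcal K_n$, the data $g_i^{\delta_n}$ and the baselines $B_{i,n}$ are fixed. Propositions~\ref{prop:uniform-sampling} and~\ref{prop:det-midpoint} therefore apply with $\mathcal K_R$ replaced by $\mathcal K_n$. They give that, for this fixed $n$, the sampling error $\varepsilon_{\mathrm{sam},n,\mathbf N}^{\delta_n}$ tends to zero as $\min\{N_\Omega,N_b,N_q\}\to\infty$. This holds almost surely in the Monte Carlo case and deterministically in the quadrature case. The task is then to let $\mathbf N$ grow fast enough with $n$.

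\textbf{Deterministic quadrature.} This case is direct. For each $n$, part (1) of Proposition~\ref{prop:det-midpoint} gives a constant $C_n$, depending on $n$ through $\mathcal K_n$, the architecture and the data, such that
\[
\varepsilon_{\mathrm{sam},n,\mathbf N}^{\delta_n}\leq C_n\bigl(N_\Omega^{-1/2}+N_b^{-1}+N_q^{-1}\bigr).
\]
I would choose the common count
\[
N_{\Omega,n}=N_{b,n}=N_{q,n}=\max\{n,\lceil n^2C_n^2\rceil\}.
\]
This gives $\varepsilon_{\mathrm{sam},n,\mathbf N_n}^{\delta_n}\leq 3/n\to0$, and the counts tend to infinity. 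Partitions with the required mesh bounds exist for every count, with the same $C_{\rm pt}$.

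\textbf{Monte Carlo sampling.} This is the main obstacle. Almost sure convergence for each fixed $n$ does not by itself give a rate, and countably many null sets must be controlled. I would first use part (1) of Proposition~\ref{prop:uniform-sampling} together with Egorov's theorem. For each $n$, there is an event $A_n$ with $\mathbb P(A_n^c)\leq 2^{-n}$ and a threshold $K_n\geq n$ such that $\varepsilon_{\mathrm{sam},n,\mathbf N}^{\delta_n}<1/n$ on $A_n$ whenever $\min\mathbf N\geq K_n$. I would set $\mathbf N_n=(K_n,K_n,K_n)$, taking $K_n$ nondecreasing.

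Since $\sum_n\mathbb P(A_n^c)<\infty$, the Borel--Cantelli lemma shows that almost surely $A_n$ holds for all sufficiently large $n$. Hence $\varepsilon_{\mathrm{sam},n,\mathbf N_n}^{\delta_n}\to0$ almost surely.

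One subtlety has to be settled here: whether the sample sequences are the same infinite i.i.d.\ sequences for all $n$ or fresh ones for each $n$. Either way, Proposition~\ref{prop:uniform-sampling} applies to each $n$ separately, and the Borel--Cantelli argument only needs the marginal bounds $\mathbb P(A_n^c)\leq2^{-n}$. It therefore requires no independence across $n$.

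Measurability of the supremum over $\mathcal K_n$ also needs a check. It follows from continuity in $\Theta$, because the supremum can be taken over a countable dense subset of $\mathcal K_n$.

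If Egorov's theorem feels too implicit, an alternative is to use the variance bound (2) together with a covering argument. On $\mathcal K_n$, the map $\Theta\mapsto\J_{n,\mathbf N}^{\delta_n}$ is Lipschitz, uniformly in the samples. This would give explicit thresholds through Chebyshev's inequality and a finite net. I would keep the Egorov route as the primary one.
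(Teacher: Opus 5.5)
Your proposal is correct and follows essentially the paper's proof. For quadrature, you choose the counts from the level-dependent constant $C_n$ in Proposition~\ref{prop:det-midpoint} so that the bound is $O(n^{-1})$. For Monte Carlo, you choose $\mathbf N_n$ so that the probability of exceeding $n^{-1}$ is at most $2^{-n}$, and then apply Borel--Cantelli.

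The only cosmetic difference is how you obtain the $2^{-n}$ bound: you use Egorov, uniformly over all $\mathbf N$ past a threshold, whereas the paper simply passes from almost sure convergence to convergence in probability. Your remarks on measurability and on not needing independence across $n$ fill in points the paper leaves implicit.
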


\begin{proof}
The proof is given in Appendix~\ref{app:uniform-sampling}.
\end{proof}

We now fix a sequence of point counts given by
Proposition~\ref{prop:sampling-selection}. Then
\[
\varepsilon_{\mathrm{sam},n,\mathbf N_n}^{\delta_n}\to0
\]
almost surely for Monte Carlo sampling and deterministically for quadrature.
For these point counts, consider the sampled minimization problem
\[
\min_{\Theta_n\in\mathcal K_n}
\J_{n,\mathbf N_n}^{\delta_n}(\Theta_n).
\]
By Proposition~\ref{prop:uniform-sampling} and Proposition~\ref{prop:det-midpoint},
this problem admits a minimizer for either sampling rule. 
Let $\widehat\Theta_n$ denote any such minimizer.
We denote by $\widehat\vartheta_n$ the conductivity network parameters in
$\widehat\Theta_n$ and set $\widehat\sigma_n=\sigma^{\widehat\vartheta_n}$.

\begin{theorem}
\label{thm:neural-solution-set-convergence}
There is a constant $C>0$ independent of $n$ such that
\begin{equation}
\mathcal R(\widehat\sigma_n)
\leq C\left(
e_{\mathrm{NN},n}+\delta_n^2
+\varepsilon_{\mathrm{sam},n,\mathbf N_n}^{\delta_n}
\right).
\label{eq:sampled-current-residual}
\end{equation}
In particular, $\mathcal R(\widehat\sigma_n)\to0$.
If, in addition, Assumption~\ref{ass:solution-set-stability} holds, then
\begin{equation}
\operatorname{dist}_{C(\overline\Omega)}
(\widehat\sigma_n,\mathcal M)
\leq
\omega\!\left(
C\left(
 e_{\mathrm{NN},n}
 +\delta_n^2
 +\varepsilon_{\mathrm{sam},n,\mathbf N_n}^{\delta_n}
\right)\right),
\label{eq:neural-error-decomposition}
\end{equation}
and consequently
$\operatorname{dist}_{C(\overline\Omega)}
(\widehat\sigma_n,\mathcal M)\to 0$.
These convergences are almost sure for Monte Carlo sampling and deterministic
for deterministic quadrature.
\end{theorem}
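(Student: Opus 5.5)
The plan is to chain three comparisons: sampled loss to continuous noisy loss, noisy loss to exact-data loss, and then the continuous noisy loss at $\widehat\Theta_n$ to $\mathcal R(\widehat\sigma_n)$ via Theorem~\ref{lem:stream-current}. The trial tuple $W_n^{\widehat\Theta_n}$ satisfies \eqref{eq:joint-state-regularity} by the hard-constraint construction \eqref{eq:hard-constraint-summary}. Theorem~\ref{lem:stream-current} therefore gives
\[
\mathcal R(\widehat\sigma_n)\le C\bigl[\J_n^{\delta_n}(\widehat\Theta_n)+\delta_n^2\bigr],
\]
with $C$ depending only on $\Omega,c_1,c_2,\lambda_q$. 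In particular $C$ does not depend on $n$, the architecture, or the baselines $B_{i,n}$.

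To bound $\J_n^{\delta_n}(\widehat\Theta_n)$, I would let $\Theta_n^\ast\in\mathcal K_n$ be a minimizer of the exact-data loss $\J_n$, so that $\J_n(\Theta_n^\ast)=e_{\mathrm{NN},n}$. Write $\varepsilon_n$ for $\varepsilon^{\delta_n}_{\mathrm{sam},n,\mathbf N_n}$. Using the uniform sampling error twice and the minimality of $\widehat\Theta_n$ for the sampled loss,
\[
\J_n^{\delta_n}(\widehat\Theta_n)\le \J_{n,\mathbf N_n}^{\delta_n}(\widehat\Theta_n)+\varepsilon_n
\le \J_{n,\mathbf N_n}^{\delta_n}(\Theta_n^\ast)+\varepsilon_n
\le \J_n^{\delta_n}(\Theta_n^\ast)+2\varepsilon_n.
\]

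The key remaining step, and the only genuinely new estimate, is to compare the noisy and exact continuous losses at a fixed parameter. The terms $\mathcal L_\Omega$ and $\mathcal L_b$ do not involve the data, so only the current term changes. Since $d_i^{\delta}=d_i^0-(g_i^\delta-g_i)$, the inequality $(a+b)^2\le 2a^2+2b^2$ and the noise model \eqref{eq:noise-model} give $\mathcal L_q^{\delta_n}\le 2\mathcal L_q^0+2\delta_n^2$. Hence
\[
\J_n^{\delta_n}(\Theta_n^\ast)\le 2\J_n(\Theta_n^\ast)+2\lambda_q\delta_n^2=2e_{\mathrm{NN},n}+2\lambda_q\delta_n^2.
\]
This step requires care because the baselines $B_{i,n}$ are built from the noisy data. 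However, they enter only through the trial class, and $e_{\mathrm{NN},n}$ is defined as the exact-data minimum over that same class, so the comparison is consistent. Combining the three displays gives \eqref{eq:sampled-current-residual} with a new constant $C$ depending on $\lambda_q$ but not on $n$.

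The convergence $\mathcal R(\widehat\sigma_n)\to0$ then follows from $e_{\mathrm{NN},n}<\eta_n\to0$, $\delta_n\to0$, and Proposition~\ref{prop:sampling-selection}. That proposition supplies the convergence $\varepsilon_n\to0$ almost surely for Monte Carlo sampling and deterministically for quadrature, which carries over to the conclusion. Under Assumption~\ref{ass:solution-set-stability}, applying $\omega$ and using its monotonicity yields \eqref{eq:neural-error-decomposition}, as in Corollary~\ref{cor:stream-conductivity}. Finally, $\omega(t)\to0$ as $t\to0$ gives $\operatorname{dist}_{C(\overline\Omega)}(\widehat\sigma_n,\mathcal M)\to0$, again almost surely or deterministically according to the sampling rule.
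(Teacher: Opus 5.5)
Your proposal is correct and follows the paper's proof essentially step for step. The paper likewise bounds the noisy loss at the exact-data minimizer $\Theta_n^\ast$ by $2e_{\mathrm{NN},n}+2\lambda_q\delta_n^2$ using $|a+b|^2\le 2|a|^2+2|b|^2$, chains this with the uniform sampling error and the minimality of $\widehat\Theta_n$, and then applies Theorem~\ref{lem:stream-current}, Corollary~\ref{cor:stream-conductivity} (with the monotonicity of $\omega$), and Proposition~\ref{prop:sampling-selection} for the almost sure or deterministic convergence.
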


\begin{proof}
We first compare the exact and noisy losses. Let $\Theta_n^\ast\in\mathcal K_n$
attain the minimum in \eqref{eq:NN-reference-approximation-error}. Using
\[
\partial_\tau\psi_{i,n}^{\Theta_n^\ast}-g_i^{\delta_n}
=\bigl(\partial_\tau\psi_{i,n}^{\Theta_n^\ast}-g_i\bigr)
+\bigl(g_i-g_i^{\delta_n}\bigr)
\]
and $|a+b|^2\leq2|a|^2+2|b|^2$, we obtain
\begin{align*}
\J_n^{\delta_n}(\Theta_n^\ast)
&=\frac1M\sum_{i=1}^M\Bigl(
\|s_i(W_n^{\Theta_n^\ast})\|_{L^2(\Omega)}^2
+\lambda_b\|\Tr s_i(W_n^{\Theta_n^\ast})\|_{L^2(\partial\Omega)}^2
+\lambda_q\|\partial_\tau\psi_{i,n}^{\Theta_n^\ast}
-g_i^{\delta_n}\|_{L^2(\partial\Omega)}^2
\Bigr)\\
&\leq\frac1M\sum_{i=1}^M\Bigl(
\|s_i(W_n^{\Theta_n^\ast})\|_{L^2(\Omega)}^2
+\lambda_b\|\Tr s_i(W_n^{\Theta_n^\ast})\|_{L^2(\partial\Omega)}^2\\
&\qquad\qquad
+2\lambda_q\|\partial_\tau\psi_{i,n}^{\Theta_n^\ast}
-g_i\|_{L^2(\partial\Omega)}^2
+2\lambda_q\|g_i-g_i^{\delta_n}\|_{L^2(\partial\Omega)}^2
\Bigr)\\
&\leq2 \J_n(\Theta_n^\ast)
+\frac{2\lambda_q}{M}\sum_{i=1}^M
\|g_i-g_i^{\delta_n}\|_{L^2(\partial\Omega)}^2
\leq2e_{\mathrm{NN},n}+2\lambda_q\delta_n^2.
\end{align*}
The last inequality follows from the minimizing property of $\Theta_n^\ast$
in \eqref{eq:NN-reference-approximation-error}
and the noise bound \eqref{eq:noise-model}.
By the definition of
$\varepsilon_{\mathrm{sam},n,\mathbf N_n}^{\delta_n}$ and the minimality of
$\widehat\Theta_n$, we know
\begin{align*}
\J_n^{\delta_n}(\widehat\Theta_n)
&\leq\J_{n,\mathbf N_n}^{\delta_n}(\widehat\Theta_n)
+\varepsilon_{\mathrm{sam},n,\mathbf N_n}^{\delta_n}
\leq\J_{n,\mathbf N_n}^{\delta_n}(\Theta_n^\ast)
+\varepsilon_{\mathrm{sam},n,\mathbf N_n}^{\delta_n}\\
&\leq\J_n^{\delta_n}(\Theta_n^\ast)
+2\varepsilon_{\mathrm{sam},n,\mathbf N_n}^{\delta_n}
\leq2e_{\mathrm{NN},n}+2\lambda_q\delta_n^2
+2\varepsilon_{\mathrm{sam},n,\mathbf N_n}^{\delta_n}.
\end{align*}
Finally, applying Theorem~\ref{lem:stream-current} yields
\eqref{eq:sampled-current-residual}. Since $e_{\mathrm{NN},n}\to0$,
$\delta_n\to0$, and $\varepsilon_{\mathrm{sam},n,\mathbf N_n}^{\delta_n}\to0$,
estimate \eqref{eq:sampled-current-residual} implies $\mathcal R(\widehat\sigma_n)\to0$.
Corollary~\ref{cor:stream-conductivity} then gives
\eqref{eq:neural-error-decomposition} and $\operatorname{dist}_{C(\overline\Omega)}
(\widehat\sigma_n,\mathcal M)\to 0$. The almost sure and deterministic convergence statements 
follow from the corresponding sampling error convergence in Proposition~\ref{prop:sampling-selection}.
\end{proof}

\section{Numerical implementation and experiments}
\label{sec:stream-numerics}

In this section, we present four numerical examples on $\Omega=(-1,1)^2$
to assess the reconstruction accuracy of the proposed method with exact
and noisy data. All examples use the same four boundary voltages, chosen
as the linearly independent sine and cosine functions
\begin{equation*}
\begin{aligned}
f_1(x_1,x_2)&=\sin(x_1+x_2),& f_2(x_1,x_2)&=\cos(x_1+x_2),\\
f_3(x_1,x_2)&=\sin\bigl(2(x_1+x_2)\bigr),&
f_4(x_1,x_2)&=\cos\bigl(2(x_1+x_2)\bigr).
\end{aligned}
\end{equation*}
Additional measurement pairs may improve reconstruction accuracy, but
they also increase the computational cost.
All examples also use the same network architectures, collocation points,
and procedures for constructing the trial fields and training the networks.
We use $50{,}000$ training epochs for Examples 1--3 and $150{,}000$ for
Example 4.

\subsection{Boundary data and sampling}
\label{subsec:numerical-setting}

For each true conductivity $\sigma_{\mathrm{true}}$, we generate synthetic
boundary data using MATLAB PDE Toolbox. We solve the forward problem for
$u_i$ with boundary voltage $f_i$ and compute the corresponding boundary
current $g_i=\sigma_{\mathrm{true}}\partial_n u_i$ on $\partial\Omega$.
The toolbox uses quadratic triangular finite elements with the maximum 
mesh size set to $0.002$. The boundary currents 
are evaluated at $200$ equally
spaced midpoints on each side of the square using the interpolated current
density. These $800$ points form the boundary collocation set $Q$,
with arclength spacing $\Delta s=0.01$.

The boundary current data used in the experiments are given by
\begin{equation*}
g_{i,\epsilon}(z)=g_i(z)+\epsilon\,|g_i(z)|\,\xi_i(z),
\qquad z\in Q,\quad i=1,\ldots,4,
\end{equation*}
where the $\xi_i(z)$ are independent standard normal random variables. We use
$\epsilon=0$ for exact data and $\epsilon=0.01$ for $1\%$ pointwise relative
noise. Here $\epsilon$ controls the pointwise relative noise level in the
numerical data, whereas $\delta$ in \eqref{eq:noise-model} bounds the measurement
error in the boundary $L^2$ norm.

The interior collocation set $X$ consists of the centers of the
$100\times100$ cells in a uniform partition of $\Omega$, with spacing
$h=0.02$. The interior constitutive residual is evaluated on $X$, and
the boundary constitutive and current residuals are evaluated on $Q$.

\subsection{Neural networks and training}
\label{subsec:numerical-networks}
\label{subsec:numerical-optimization}

We use the networks introduced in Section~\ref{sec:neural-architectures}.
Each of the four field networks has widths $2\to64\to64\to64\to2$,
and the shared conductivity network has widths
$2\to64\to64\to64\to64\to1$. We use $\tanh$ as the activation function.

For the trial functions in \eqref{eq:conductivity-hard-map},
\eqref{eq:field-hard-representations}, and
\eqref{eq:stream-network-representation}, we choose
\begin{equation*}
\rho(x_1,x_2)=(1-x_1^2)(1-x_2^2),\qquad
c_1=0.1,\quad c_2=10,\qquad
x_\star=(-1,-1).
\end{equation*}
We use Coons interpolation
\cite{GordonHall1973} to extend the known boundary conductivity
$\sigma_b$ into the domain $\Omega=(-1,1)^2$.
For calculations on the square, write $\xi=(x_1+1)/2$ and
$\zeta=(x_2+1)/2$. We first define the bilinear corner interpolant of $\sigma_b$ by
\begin{equation}
\begin{aligned}
p_{\sigma_b}(x_1,x_2)={}&(1-\xi)(1-\zeta)\sigma_b(-1,-1)
          +\xi(1-\zeta)\sigma_b(1,-1)\\
          &+\xi\zeta\sigma_b(1,1)+(1-\xi)\zeta\sigma_b(-1,1).
\end{aligned}
\label{eq:numerical-corner-interpolant}
\end{equation}
Let $\sigma_L,\sigma_R,\sigma_B,\sigma_T$ denote the prescribed boundary
conductivity $\sigma_b$ on the left, right, bottom, and top sides of the
square, respectively. Then the Coons extension for the conductivity lifting is given by
\begin{equation*}
A_b(x_1,x_2)=(1-\xi)\sigma_L(x_2)+\xi\sigma_R(x_2)+(1-\zeta)\sigma_B(x_1)+\zeta\sigma_T(x_1)-p_{\sigma_b}(x_1,x_2).
\end{equation*}
This extension agrees with the prescribed conductivity on each side.
We use harmonic extensions for the potential and stream liftings.
Details of these constructions are given in
Appendix~\ref{app:numerical-details}.

Experiments with larger current magnitudes can dominate the loss.
To normalize the contributions of the four experiments, we set
\begin{equation*}
\kappa_i^2=\frac{1}{|Q|}\sum_{z\in Q}|g_{i,\epsilon}(z)|^2
                +c_{\mathrm{cur}},
\qquad c_{\mathrm{cur}}=10^{-8}.
\end{equation*}
These factors remain fixed during training. The normalized sampled
counterparts of $\mathcal L_\Omega$, $\mathcal L_b$, and $\mathcal L_q^\delta$
in Section~\ref{sec:continuous-neural-loss} are
\begin{equation*}
\begin{aligned}
\widetilde{\mathcal L}_\Omega(\Theta)
 &=\frac{1}{4|X|}\sum_{i=1}^4\sum_{x\in X}
       \frac{|s_i(W^\Theta)(x)|^2}{\kappa_i^2},\\
\widetilde{\mathcal L}_b(\Theta)
 &=\frac{1}{4|Q|}\sum_{i=1}^4\sum_{y\in Q}
       \frac{|s_i(W^\Theta)(y)|^2}{\kappa_i^2},\\
\widetilde{\mathcal L}_q(\Theta)
 &=\frac{1}{4|Q|}\sum_{i=1}^4\sum_{z\in Q}
       \frac{|\partial_\tau\psi_i^{\theta_i}(z)-g_{i,\epsilon}(z)|^2}
            {\kappa_i^2}.
\end{aligned}
\end{equation*}
To accelerate convergence during training, we also use a dynamic weighting
strategy \cite{WangSankaranWangPerdikaris2023,WangTengPerdikaris2021}. At epoch $t$, the training loss is
\begin{equation*}
\widetilde{\mathcal L}(\Theta;t)
=\widetilde{\mathcal L}_\Omega(\Theta)
 +\alpha_b(t)\widetilde{\mathcal L}_b(\Theta)
 +\alpha_q(t)\widetilde{\mathcal L}_q(\Theta).
\end{equation*}
For the dynamic updates, we use gradients with respect to the field network
parameters $\theta=(\theta_1,\theta_2,\theta_3,\theta_4)$.
We do not include the conductivity network parameters in these gradient
norms, since only the interior loss depends on them and the two boundary
loss terms are independent of them.
Every $2000$ epochs, we update the coefficients $\alpha_b$ and $\alpha_q$ using the moving average formulas
\begin{equation*}
\alpha_b^{\mathrm{new}}
=0.9\alpha_b^{\mathrm{old}}
 +0.1\frac{\|\nabla_\theta\widetilde{\mathcal L}_\Omega\|_2}{\|\nabla_\theta\widetilde{\mathcal L}_b\|_2+c_{\mathrm{bal}}}
\quad\text{and}\quad
\alpha_q^{\mathrm{new}}
=0.9\alpha_q^{\mathrm{old}}
 +0.1\frac{\|\nabla_\theta\widetilde{\mathcal L}_\Omega\|_2}{\|\nabla_\theta\widetilde{\mathcal L}_q\|_2+c_{\mathrm{bal}}},
\end{equation*}
where $\|\cdot\|_2$ denotes the Euclidean norm and $c_{\mathrm{bal}}=10^{-12}$. The coefficients remain fixed between
updates and are treated as constants when differentiating the total loss.

All networks are trained jointly with Adam \cite{KingmaBa2015}, using all
collocation points and all four voltage experiments at each step. 
We report the results obtained at the final training epoch. The training settings are summarized in
Table~\ref{tab:training-settings}.

\begin{table}[!htbp]
\centering\small
\renewcommand{\arraystretch}{1.2}
\caption{Training settings used in the numerical experiments.}
\label{tab:training-settings}
\begin{tabularx}{\textwidth}{@{}>{\raggedright\arraybackslash}p{0.43\textwidth}Y@{}}
\toprule
Quantity & Setting\\
\midrule
Initial learning rates & $5\times10^{-4}$ for field networks; $10^{-3}$ for the conductivity network\\
Learning rate decay & Factor $0.8$ every $5000$ epochs\\
Initial $(\alpha_b,\alpha_q)$ & $(0.1,1)$\\
Coefficient update interval & $2000$ epochs\\
Moving average factor for coefficient updates & $0.9$\\
Training epochs & $50{,}000$ for Examples 1--3; $150{,}000$ for Example 4\\
\bottomrule
\end{tabularx}
\end{table}

\subsection{Reconstruction errors}
\label{subsec:numerical-metrics}

Let $\sigma=\sigma_{\mathrm{true}}$ and
$\widehat\sigma=\sigma^{\widehat\vartheta}$, where
$\widehat\vartheta$ denotes the final conductivity network parameters. We define
the four reconstruction errors by

\begin{equation*}
\begin{aligned}
E_{\sigma,1}
&=\frac{\int_\Omega|\widehat\sigma-\sigma|\,dx}
        {\int_\Omega|\sigma|\,dx},
&\qquad
E_{\sigma,2}
&=\left(\frac{\int_\Omega|\widehat\sigma-\sigma|^2\,dx}
              {\int_\Omega|\sigma|^2\,dx}\right)^{1/2},\\
\mathrm{RE}_{\mathrm{mean}}
&=\frac1{|\Omega|}\int_\Omega
       \frac{|\widehat\sigma-\sigma|}{|\sigma|}\,dx,
&
\mathrm{MSE}
&=\frac1{|\Omega|}\int_\Omega|\widehat\sigma-\sigma|^2\,dx.
\end{aligned}
\end{equation*}
Here, $E_{\sigma,1}$ and $E_{\sigma,2}$ are the relative $L^1$ and $L^2$
errors. The quantity $\mathrm{RE}_{\mathrm{mean}}$ averages the pointwise
relative error over $\Omega$, and the mean squared error (MSE) averages
the squared error. All four errors are computed on the $100\times100$
interior grid $X$. On the same grid, we also compute the
minimum and maximum reconstructed conductivities
\[
\widehat\sigma_{\min}=\min_{x\in X}\widehat\sigma(x)
\quad\text{and}\quad
\widehat\sigma_{\max}=\max_{x\in X}\widehat\sigma(x).
\]

\newcommand{\numreconfigure}[3][!htbp]{%
\begin{figure}[#1]\centering
\IfFileExists{figures/example#2_all.pdf}{%
\includegraphics[width=\linewidth]{figures/example#2_all.pdf}}{%
\IfFileExists{figures/example#2_all.png}{%
\includegraphics[width=\linewidth]{figures/example#2_all.png}}{%
\fbox{\parbox[c][1.4in][c]{0.95\linewidth}{\centering
Combined figure pending\\[4pt]
\texttt{example#2\_all.pdf}}}}}
\caption{#3. Left: True conductivity; Middle: Reconstruction from exact
data; Right: Reconstruction from $1\%$ noisy data.}
\label{fig:numerical-profile-#2}\end{figure}}

\subsection{Example 1. One blob conductivity}
\label{subsec:num-example-one}

We first consider the reconstruction of a single smooth blob on
$\Omega=(-1,1)^2$, as in Example 5.2 of \cite{LiWang2023}.
The true conductivity is given by
\begin{equation}
\sigma_{\mathrm{true}}^{(1)}(x_1,x_2)
 =1+e^{-8[x_1^2+(x_2-0.55)^2]}.
\label{eq:num-profile-one}
\end{equation}
The blob is centered at $(0,0.55)$, with background conductivity $1$
and height $2$. We consider exact data and $1\%$ noisy data.
Figure~\ref{fig:numerical-profile-1} presents the true conductivity
and the two computed conductivities. The reconstruction errors and
computed heights are listed in Table~\ref{tab:num-example-one},
together with the results of the minimal dissipation local discontinuous
Galerkin method (MD-LDG) in \cite{LiWang2023}.

\numreconfigure[!htbp]{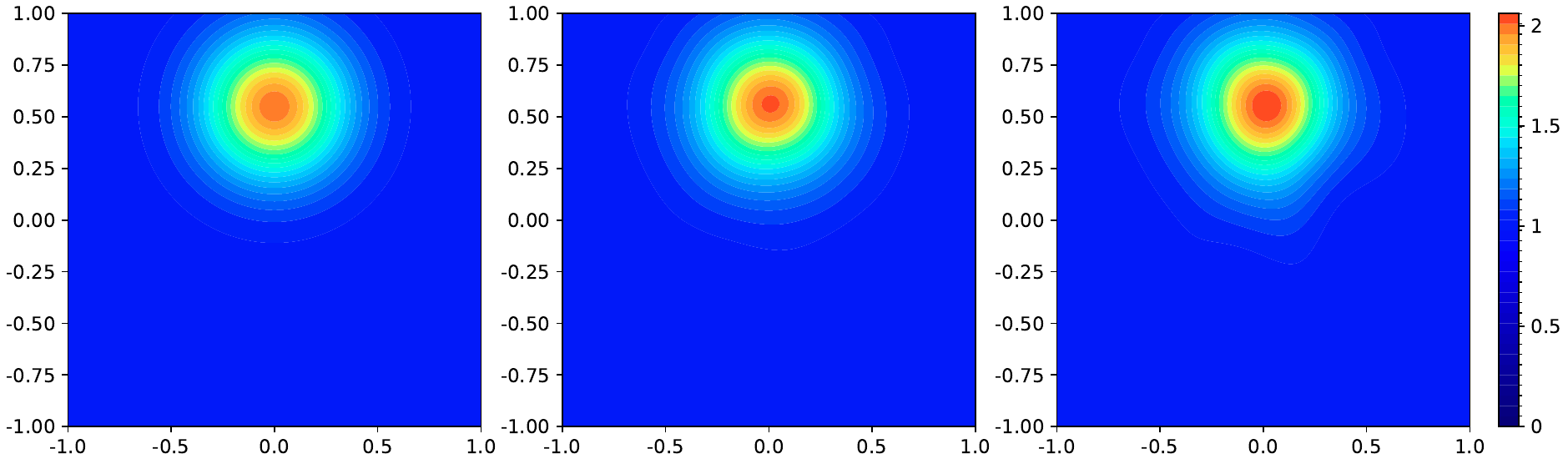}{One blob conductivity \eqref{eq:num-profile-one}}

\begin{table}[!htbp]
\centering\small
\setlength{\tabcolsep}{3.2pt}
\renewcommand{\arraystretch}{1.15}
\settowidth{\reconstructioncaptionwidth}{\normalfont\normalsize
Reconstruction errors and predicted conductivity ranges for exact and $1\%$ noisy data.}
\captionsetup{format=plain,font=normalsize,justification=raggedright,
  singlelinecheck=false,width=\reconstructioncaptionwidth}
\caption[Example 1. Reconstruction errors and predicted conductivity ranges for exact and $1\%$ noisy data.]{Example 1.\\
Reconstruction errors and predicted conductivity ranges for exact and $1\%$ noisy data.}
\label{tab:num-example-one}
\label{tab:num-example-one-comparison}
\begin{tabular}{@{}llcccccc@{}}
\toprule
Data & Method & $E_{\sigma,1}$ & $E_{\sigma,2}$ & $\mathrm{RE}_{\mathrm{mean}}$ & MSE & $\hat\sigma_{\min}$ & $\hat\sigma_{\max}$\\
\midrule
Exact & Our method & 0.003500 & 0.006539 & 0.003050 & 0.00005294 & 0.99439 & 2.02584\\
      & MD-LDG~\cite{LiWang2023} & 0.017813 & 0.034917 & 0.015141 & 0.00150952 & 0.97438 & 1.81642\\
\midrule
$1\%$ & Our method & 0.006564 & 0.012032 & 0.005819 & 0.00017925 & 0.98801 & 2.06058\\
      & MD-LDG~\cite{LiWang2023} & 0.027572 & 0.051694 & 0.023584 & 0.00330859 & 0.97130 & 1.72355\\
\bottomrule
\end{tabular}
\end{table}

Figure~\ref{fig:numerical-profile-1} demonstrates that the proposed method
accurately reconstructs the location, shape, and height of the blob from
both exact and noisy boundary data. The quantitative results in
Table~\ref{tab:num-example-one} confirm the high reconstruction accuracy,
with relative $L^2$ errors of only $0.006539$ and $0.012032$, respectively.
The reconstructed heights are also close to the true value of $2$.
The comparison with the MD-LDG method~\cite{LiWang2023} shows a substantial
improvement in reconstruction accuracy. All four error measures are smaller
for both data conditions, and the relative $L^2$ errors are reduced by
approximately $81\%$ and $77\%$ for exact and noisy data, respectively. The proposed
method also recovers the blob height more accurately.

\subsection{Example 2. Two blobs conductivity}
\label{subsec:num-example-two}

In the second example, we consider two smooth blobs on the same
domain, as in Example 5.3 of \cite{LiWang2023}. The true
conductivity is given by
\begin{equation}
\sigma_{\mathrm{true}}^{(2)}(x_1,x_2)
=1+e^{-20[(x_1+0.7)^2+x_2^2]}+e^{-20[x_1^2+(x_2-0.7)^2]}.
\label{eq:num-profile-two}
\end{equation}

\numreconfigure{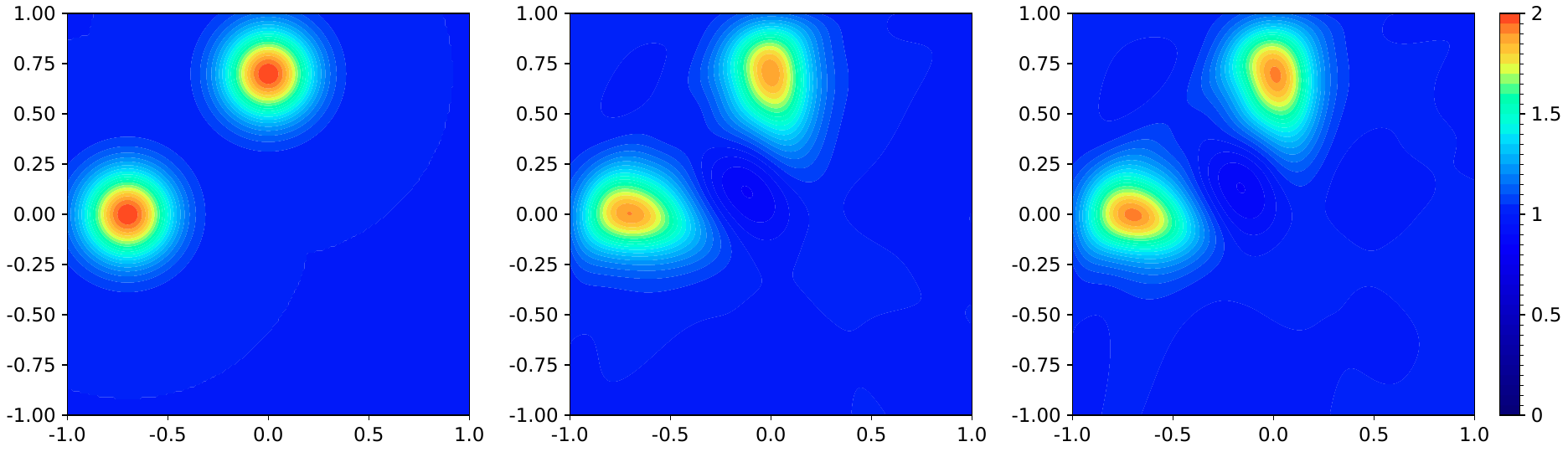}{Two blobs conductivity \eqref{eq:num-profile-two}}

The two blobs are centered at $(-0.7,0)$ and $(0,0.7)$, with
background conductivity $1$ and height approximately $2$.
Figure~\ref{fig:numerical-profile-2} presents the true conductivity
and the computed conductivities for exact and $1\%$ noisy data.
It demonstrates that the proposed method
successfully resolves the two blobs and accurately identifies their
locations from both exact and noisy boundary data. The blobs remain clearly
separated in both reconstructions. Table~\ref{tab:num-example-two} lists the reconstruction errors and
computed conductivity ranges for both noise levels. It shows
relative $L^2$ errors of $0.037354$ and $0.036223$. 
The reconstructed maxima, $1.90126$ and $1.92163$, also provide good
approximations of the true maximum of approximately $2$.

\begin{table}[!htbp]
\centering\small
\setlength{\tabcolsep}{4pt}
\renewcommand{\arraystretch}{1.15}
\settowidth{\reconstructioncaptionwidth}{\normalfont\normalsize
Reconstruction errors and predicted conductivity ranges for exact and $1\%$ noisy data.}
\captionsetup{format=plain,font=normalsize,justification=raggedright,
  singlelinecheck=false,width=\reconstructioncaptionwidth}
\caption[Example 2. Reconstruction errors and predicted conductivity ranges for exact and $1\%$ noisy data.]{Example 2.\\
Reconstruction errors and predicted conductivity ranges for exact and $1\%$ noisy data.}
\label{tab:num-example-two}
\begin{tabular}{@{}lcccccc@{}}
\toprule
Data & $E_{\sigma,1}$ & $E_{\sigma,2}$ & $\mathrm{RE}_{\mathrm{mean}}$ & MSE & $\hat\sigma_{\min}$ & $\hat\sigma_{\max}$\\
\midrule
Exact & 0.017469 & 0.037354 & 0.015916 & 0.00166274 & 0.84737 & 1.90126\\
$1\%$ & 0.017191 & 0.036223 & 0.015834 & 0.00156359 & 0.84831 & 1.92163\\
\bottomrule
\end{tabular}
\end{table}

\subsection{Example 3. Polynomial conductivity}
\label{subsec:num-example-three}

We next consider a polynomial conductivity of degree five on $\Omega$,
given by
\begin{equation}
\sigma_{\mathrm{true}}^{(3)}(x_1,x_2)=3-x_2-x_1^3x_2^2.
\label{eq:num-profile-three}
\end{equation}
On the closed square, its minimum is $1$ at $(1,1)$ and its maximum
is $5$ at $(-1,-1)$.
The true and computed conductivities are presented in
Figure~\ref{fig:numerical-profile-3}. We again use exact and $1\%$
noisy data, and list the reconstruction errors and computed ranges
in Table~\ref{tab:num-example-three}.

\numreconfigure{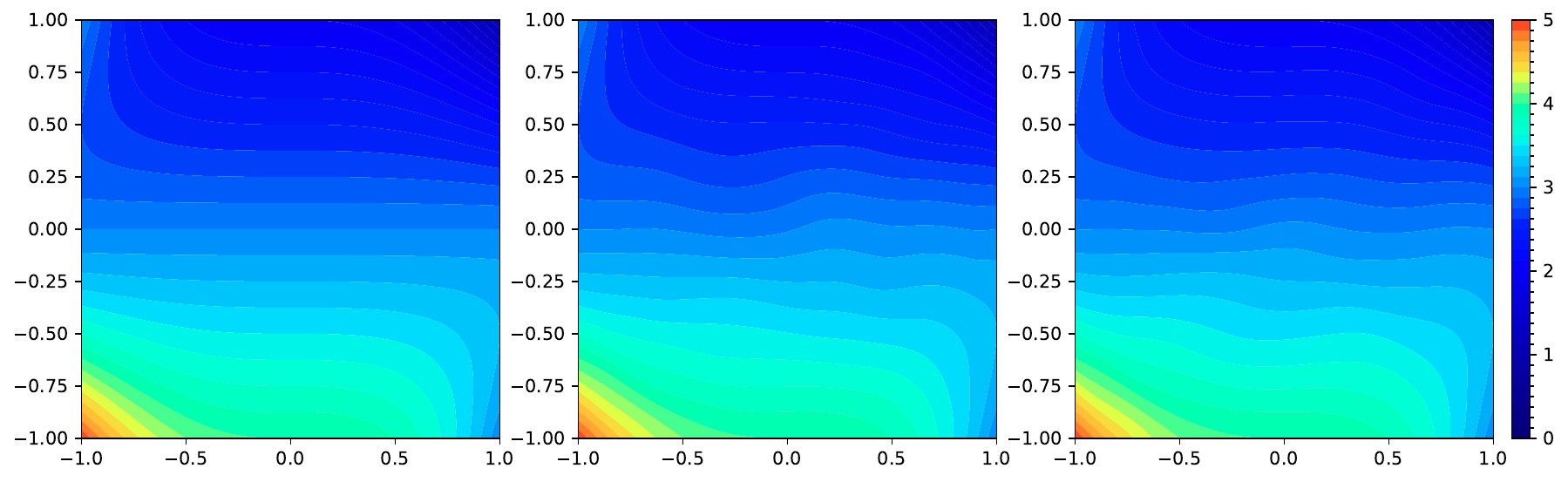}{Polynomial conductivity \eqref{eq:num-profile-three}}

\begin{table}[!htbp]
\centering\small
\setlength{\tabcolsep}{4pt}
\renewcommand{\arraystretch}{1.15}
\settowidth{\reconstructioncaptionwidth}{\normalfont\normalsize
Reconstruction errors and predicted conductivity ranges for exact and $1\%$ noisy data.}
\captionsetup{format=plain,font=normalsize,justification=raggedright,
  singlelinecheck=false,width=\reconstructioncaptionwidth}
\caption[Example 3. Reconstruction errors and predicted conductivity ranges for exact and $1\%$ noisy data.]{Example 3.\\
Reconstruction errors and predicted conductivity ranges for exact and $1\%$ noisy data.}
\label{tab:num-example-three}
\begin{tabular}{@{}lcccccc@{}}
\toprule
Data & $E_{\sigma,1}$ & $E_{\sigma,2}$ & $\mathrm{RE}_{\mathrm{mean}}$ & MSE & $\hat\sigma_{\min}$ & $\hat\sigma_{\max}$\\
\midrule
Exact & 0.003511 & 0.005107 & 0.003620 & 0.00024417 & 1.05860 & 4.94135\\
$1\%$ & 0.003207 & 0.004445 & 0.003216 & 0.00018498 & 1.05869 & 4.94104\\
\bottomrule
\end{tabular}
\end{table}

Figure~\ref{fig:numerical-profile-3} shows that both reconstructions closely match 
the true conductivity, reproducing the curved level sets and the decrease in conductivity 
from the lower left to the upper right.
Table~\ref{tab:num-example-three} quantifies this agreement, with relative $L^2$ errors of $0.005107$
and $0.004445$ for exact and noisy data, respectively.
The conductivity range is also accurately recovered. The reconstructed
minima and maxima are close to the true minimum and maximum on the
interior scoring grid, approximately $1.05901$ and $4.94099$, respectively.

\subsection{Example 4. Sinusoidal conductivity}
\label{subsec:num-example-four}

In the last example, we consider the sinusoidal conductivity on $\Omega$,
given by
\begin{equation}
\sigma_{\mathrm{true}}^{(4)}(x_1,x_2)
 =\frac32-\frac12\sin(2\pi x_1)\sin(2\pi x_2).
\label{eq:num-profile-four}
\end{equation}
Its values range from $1$ to $2$, with alternating regions of higher
and lower conductivity. This is the FourierBoard profile defined on the
unit square in \cite{KaloutEtAl2026}, rescaled to $\Omega=(-1,1)^2$.

Figure~\ref{fig:numerical-profile-4} presents
the true conductivity and the computed conductivities for exact and
$1\%$ noisy data. It demonstrates that the proposed method
accurately reconstructs the sinusoidal conductivity, resolving the
alternating regions of higher and lower conductivity from both exact and
noisy boundary data. The errors and computed ranges are listed in
Table~\ref{tab:num-example-four}. It shows relative $L^2$
errors of $0.025016$ and $0.039344$, respectively.

\numreconfigure{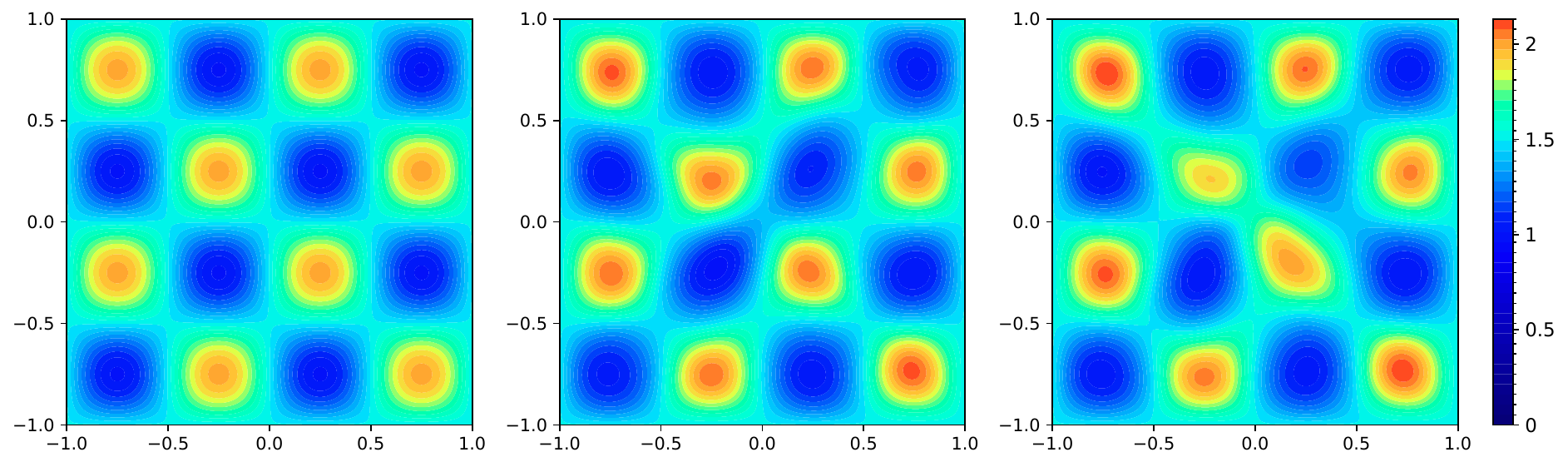}{Sinusoidal conductivity \eqref{eq:num-profile-four}}

\begin{table}[!htbp]
\centering\small
\setlength{\tabcolsep}{4pt}
\renewcommand{\arraystretch}{1.15}
\settowidth{\reconstructioncaptionwidth}{\normalfont\normalsize
Reconstruction errors and predicted conductivity ranges for exact and $1\%$ noisy data.}
\captionsetup{format=plain,font=normalsize,justification=raggedright,
  singlelinecheck=false,width=\reconstructioncaptionwidth}
\caption[Example 4. Reconstruction errors and predicted conductivity ranges for exact and $1\%$ noisy data.]{Example 4.\\
Reconstruction errors and predicted conductivity ranges for exact and $1\%$ noisy data.}
\label{tab:num-example-four}
\begin{tabular}{@{}lcccccc@{}}
\toprule
Data & $E_{\sigma,1}$ & $E_{\sigma,2}$ & $\mathrm{RE}_{\mathrm{mean}}$ & MSE & $\hat\sigma_{\min}$ & $\hat\sigma_{\max}$\\
\midrule
Exact & 0.018167 & 0.025016 & 0.018105 & 0.00144717 & 0.98373 & 2.10696\\
$1\%$ & 0.026564 & 0.039344 & 0.026314 & 0.00357972 & 1.00746 & 2.13258\\
\bottomrule
\end{tabular}
\end{table}

We also compare our results with the FourierBoard results in
\cite{KaloutEtAl2026}. Since \cite{KaloutEtAl2026} uses exact data, we use
our results for exact data in this comparison.
Table~\ref{tab:num-example-four-comparison} lists our results and the
FourierBoard results with and without Fourier feature encoding (FFE),
together with the numbers of boundary excitations, training epochs, and
trainable parameters.

\begin{table}[!htbp]
\centering\small
\setlength{\tabcolsep}{4pt}
\renewcommand{\arraystretch}{1.15}
\caption{Example 4. Comparison with the FourierBoard results in
\cite{KaloutEtAl2026} for exact data.}
\label{tab:num-example-four-comparison}
\begin{tabular}{@{}l!{\hspace{6pt}}rrrD{.}{.}{1.6}D{.}{.}{1.8}@{}}
\toprule
\multicolumn{1}{@{}c!{\hspace{6pt}}}{Method} &
\multicolumn{1}{c}{\shortstack{Boundary\\excitations}} &
\multicolumn{1}{c}{Epochs} &
\multicolumn{1}{c}{\shortstack{Trainable\\parameters}} &
\multicolumn{1}{c}{$\mathrm{RE}_{\mathrm{mean}}$} &
\multicolumn{1}{c@{}}{MSE}\\
\midrule
Our method & 4 & 150,000 & 47,305 & 0.018105 & 0.00144717\\
PINN with FFE~\cite{KaloutEtAl2026} & 140 & 400,300 & 611,714 & 0.084 & 0.025\\
PINN without FFE~\cite{KaloutEtAl2026} & 140 & 392,000 & 415,874 & 0.082 & 0.027\\
\bottomrule
\end{tabular}
\end{table}

The comparison in Table~\ref{tab:num-example-four-comparison} demonstrates
a substantial improvement over the FourierBoard results in
\cite{KaloutEtAl2026}. The mean pointwise relative error is approximately
$78\%$ smaller, and the MSE is approximately $94\%$ smaller. These improvements are achieved
using only four boundary excitations, compared with $140$ in
\cite{KaloutEtAl2026}. Our method also uses significantly fewer trainable parameters
than the method in \cite{KaloutEtAl2026}. For example, each of our joint
potential and stream function networks has three hidden layers with $64$
neurons per layer, whereas the potential network in \cite{KaloutEtAl2026},
shared across all excitations, has six hidden layers with $256$ neurons per
layer. Moreover, even our reconstruction obtained with $1\%$ measurement
noise has substantially smaller errors than both FourierBoard results for
exact data in \cite{KaloutEtAl2026}. These findings demonstrate
that our method achieves high reconstruction accuracy for an oscillatory
conductivity using a small number of boundary excitations.

\section{Concluding remarks}
\label{sec:conclusion}

In this paper, we develop a constrained neural stream function method 
for two dimensional EIT with Lipschitz conductivities. The stream 
representation enforces current conservation, reduces the conductivity 
equation to a first order system in strong form, and allows direct least 
squares minimization. The conductivity constraints and prescribed boundary 
voltages are imposed exactly through the neural trial functions. The measured 
currents are used to construct a fixed baseline for the stream functions, whose 
boundary traces remain free. The reconstruction uses only the boundary 
measurements and the governing equation.

We establish an error analysis relating the sampled neural loss to the
physical boundary current error. This estimate also yields a conductivity 
reconstruction estimate under the stated inverse stability condition.
The error estimates separate the
contributions of neural approximation, measurement noise, and sampling.
Numerical examples demonstrate high reconstruction accuracy with both
exact and noisy boundary data, and comparisons with other methods show
substantial reductions in reconstruction error.

There are several directions that can be investigated in future work.
In the present work, we use measurements on the whole boundary. We plan
to consider EIT problems with partial boundary data, where measurements
are available only on part of the boundary. Another direction for future
work is the reconstruction of piecewise continuous conductivities. This
problem lies outside the Lipschitz setting considered in this paper.
We plan to investigate suitable neural representations and develop the
corresponding approximation and reconstruction error analysis. We will
also investigate the number and choice of boundary excitations and
their effects on reconstruction accuracy with exact and noisy data.

\section*{Acknowledgements}

The author uses ChatGPT for language editing and assistance with optimizing the code. 
The author reviews and verifies the suggested changes and takes full responsibility for the content of the paper.

\section*{Data availability}

The data supporting the findings of this study will be available
after publication of the article.

\appendix

\section{Density results for the boundary factor}
\label{app:boundary-factors}

We prove Lemma~\ref{lem:rho-density} in this appendix.

\subsection{Uniform density}

We first establish
\eqref{eq:rho-density} for every boundary factor satisfying
\eqref{eq:boundary-factor-general}.
Functions in $\rho_\Omega C(\overline\Omega)$ vanish on
$\partial\Omega$, and so do their uniform limits. For the reverse
inclusion, let $h\in C(\overline\Omega)$ vanish on $\partial\Omega$ and
define
\[
v_\varepsilon
=\frac{h}{\max\{\rho_\Omega,\varepsilon\}}.
\]
For $\varepsilon>0$, we have $v_\varepsilon\in C(\overline\Omega)$ and
\[
|h-\rho_\Omega v_\varepsilon|
\leq |h|\,\mathbf 1_{\{\rho_\Omega<\varepsilon\}}.
\]
For any $\eta>0$, continuity of $h$ and $h|_{\partial\Omega}=0$ give
a neighborhood $U$ of $\partial\Omega$ such that
$|h|<\eta$ on $U\cap\overline\Omega$. Since $\rho_\Omega$ is continuous
and positive on the compact set $\overline\Omega\setminus U$, it has
a positive lower bound there. Hence, for sufficiently small $\varepsilon>0$,
$\{\rho_\Omega<\varepsilon\}\subset U$, and
\[
\|h-\rho_\Omega v_\varepsilon\|_{C(\overline\Omega)}
\leq\sup_{\{\rho_\Omega<\varepsilon\}}|h|
\leq\eta.
\]
Since $\eta>0$ is arbitrary, $\rho_\Omega v_\varepsilon\to h$ uniformly.
This proves \eqref{eq:rho-density}.

\subsection{\texorpdfstring{$H^2$}{H2} density}

We next prove \eqref{eq:rho-H2-density} for suitable boundary factors.
We first recall some notation. Let $C_c^\infty(\Omega)$ denote the space
of smooth functions compactly supported in $\Omega$, and set
\[
H_0^2(\Omega)
=\overline{C_c^\infty(\Omega)}^{\,H^2(\Omega)}.
\]
The space $C^m(\overline\Omega)$ consists of restrictions of
$C^m$ functions defined in a neighborhood of $\overline\Omega$.

For a boundary factor $\rho_\Omega$ satisfying
\eqref{eq:boundary-factor-general}, set
\[
Y_\rho
=\overline{\rho_\Omega C^\infty(\overline\Omega)}^{\,H^2(\Omega)}.
\]
Since $\rho_\Omega\in W^{2,\infty}(\Omega)$ and
$\rho_\Omega|_{\partial\Omega}=0$, every function in
$\rho_\Omega C^\infty(\overline\Omega)$ belongs to
$H^2(\Omega)\cap H_0^1(\Omega)$. Since
$H^2(\Omega)\cap H_0^1(\Omega)$ is closed in $H^2(\Omega)$, we obtain
\begin{equation*}
Y_\rho\subset H^2(\Omega)\cap H_0^1(\Omega).
\end{equation*}
It remains to choose $\rho_\Omega$ so that
\begin{equation}
H^2(\Omega)\cap H_0^1(\Omega)\subset Y_\rho.
\label{eq:appendix-reverse-inclusion}
\end{equation}
Once \eqref{eq:appendix-reverse-inclusion} is established, we have
$Y_\rho=H^2(\Omega)\cap H_0^1(\Omega)$. Together with the inclusions
\[
\rho_\Omega C^\infty(\overline\Omega)
\subset\rho_\Omega C^2(\overline\Omega)
\subset H^2(\Omega)\cap H_0^1(\Omega),
\]
this yields \eqref{eq:rho-H2-density}. We prove
\eqref{eq:appendix-reverse-inclusion} separately for the two domain classes.
We also need the following lemma for both cases.

\begin{lemma}
\label{lem:rho-H02-inclusion}
For every boundary factor $\rho_\Omega$ satisfying
\eqref{eq:boundary-factor-general}, we have
$H_0^2(\Omega)\subset Y_\rho$.
\end{lemma}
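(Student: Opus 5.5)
Since $Y_\rho$ is closed in $H^2(\Omega)$ and $H_0^2(\Omega)$ is by definition the $H^2$ closure of $C_c^\infty(\Omega)$, it suffices to show $C_c^\infty(\Omega)\subset Y_\rho$. In fact we will show the stronger statement $C_c^\infty(\Omega)\subset\rho_\Omega C^\infty(\overline\Omega)$ up to an $H^2$-approximation, dealing with the fact that $\rho_\Omega$ is only $C^{1,1}$, so $\varphi/\rho_\Omega$ need not be $C^\infty$.

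Fix $\varphi\in C_c^\infty(\Omega)$ with support $K\subset\Omega$ compact. Since $\rho_\Omega$ is continuous and positive on $\overline\Omega\supset K$, there is $\gamma>0$ with $\rho_\Omega\geq\gamma$ on a compact neighborhood $K'\subset\Omega$ of $K$. Define $q=\varphi/\rho_\Omega$, extended by zero outside $K$. Then $q$ is supported in $K$, and on $K'$ the quotient of a smooth function by a $W^{2,\infty}$ function bounded below by $\gamma$ lies in $W^{2,\infty}$ (by the chain rule for $t\mapsto 1/t$ on $[\gamma,\infty)$ and the product rule for $W^{2,\infty}$ functions). Hence $q\in W^{2,\infty}(\Omega)\subset H^2(\Omega)$ with compact support in $\Omega$, and $\rho_\Omega q=\varphi$.

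Now mollify: let $q_\epsilon=q*\eta_\epsilon$ for $\epsilon<\operatorname{dist}(K,\partial K')$. Then $q_\epsilon\in C_c^\infty(\Omega)\subset C^\infty(\overline\Omega)$, supported in $K'$, and $q_\epsilon\to q$ in $H^2(\Omega)$. Since multiplication by $\rho_\Omega\in W^{2,\infty}(\Omega)$ is bounded on $H^2(\Omega)$ (Leibniz rule, with $\|\rho_\Omega v\|_{H^2}\leq C\|\rho_\Omega\|_{W^{2,\infty}}\|v\|_{H^2}$), we get $\rho_\Omega q_\epsilon\to\rho_\Omega q=\varphi$ in $H^2(\Omega)$. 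Each $\rho_\Omega q_\epsilon\in\rho_\Omega C^\infty(\overline\Omega)$, so $\varphi\in Y_\rho$, and closing up gives $H_0^2(\Omega)\subset Y_\rho$.

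The only step requiring care is the regularity of $1/\rho_\Omega$ on $K'$: I would verify directly that $\partial_j(1/\rho_\Omega)=-\partial_j\rho_\Omega/\rho_\Omega^2$ is Lipschitz on $K'$ (as a product and quotient of Lipschitz functions with denominator bounded below), so $1/\rho_\Omega\in W^{2,\infty}(K')$; everything else is standard.
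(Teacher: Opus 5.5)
Your proof is correct and follows essentially the same route as the paper: $\varphi/\rho_\Omega\in H^2(\Omega)$ because $\rho_\Omega$ is bounded below on $\operatorname{supp}\varphi$. You then approximate this quotient by smooth functions (you mollify, while the paper cites density of $C^\infty(\overline\Omega)$ in $H^2(\Omega)$), multiply by $\rho_\Omega\in W^{2,\infty}(\Omega)$, and conclude using that $Y_\rho$ is closed.

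One small wording slip: $\rho_\Omega$ is positive on $\Omega$, not on $\overline\Omega$, since it vanishes on $\partial\Omega$. Positivity on the compact set $K'\subset\Omega$ is all your argument uses, so nothing breaks.
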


\begin{proof}
Let $\eta\in C_c^\infty(\Omega)$. Since $\rho_\Omega$ is bounded away
from zero on $\operatorname{supp}\eta$, the quotient
$\eta/\rho_\Omega$ belongs to $H^2(\Omega)$. Choose
$\phi_k\in C^\infty(\overline\Omega)$ such that
\[
\phi_k\to\frac{\eta}{\rho_\Omega}
\quad\text{in }H^2(\Omega).
\]
Multiplication by $\rho_\Omega\in W^{2,\infty}(\Omega)$ is bounded on
$H^2(\Omega)$, so $\rho_\Omega\phi_k\to\eta$ in $H^2(\Omega)$.
Thus $C_c^\infty(\Omega)\subset Y_\rho$. Since $Y_\rho$ is closed in
$H^2(\Omega)$, taking the $H^2(\Omega)$-closure gives
\[
H_0^2(\Omega)
=\overline{C_c^\infty(\Omega)}^{\,H^2(\Omega)}
\subset Y_\rho.
\]
\end{proof}

\subsubsection{Proof of \texorpdfstring{\eqref{eq:appendix-reverse-inclusion}}{(A.1)} for the \texorpdfstring{$C^{1,1}$}{C1,1} case}
\label{app:c11-density}

The signed distance has $C^{1,1}$ regularity in a tubular neighborhood
of the boundary, which gives the following
standard construction \cite{BoyerFabrie2013}.

\begin{lemma}
\label{prop:standard-c11-factor}
If $\partial\Omega$ is $C^{1,1}$, then there is a function
$\rho_\Omega\in C^{1,1}(\overline\Omega)$ such that
\begin{equation}
\rho_\Omega=0\quad\text{on }\partial\Omega,
\qquad
\rho_\Omega>0\quad\text{in }\Omega,
\qquad
\partial_n\rho_\Omega=-1\quad\text{on }\partial\Omega.
\label{eq:appendix-smooth-factor}
\end{equation}
\end{lemma}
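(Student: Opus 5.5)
The plan is to build $\rho_\Omega$ from the signed distance to the boundary, cut off smoothly away from $\partial\Omega$ and glued to a positive constant in the interior. Let $d(x)=\operatorname{dist}(x,\partial\Omega)$ for $x\in\overline\Omega$. For a $C^{1,1}$ boundary, the uniform interior and exterior ball condition holds with some radius $r_0>0$. Consequently, on the tubular region $T_{r_0}=\{x\in\overline\Omega: d(x)<r_0\}$ every point has a unique nearest boundary point $\pi(x)$, and $d\in C^{1,1}(\overline{T_{r_0}})$. In this region $\nabla d(x)=-n(\pi(x))$, and $\nabla d$ is Lipschitz because $\pi$ and $n$ are Lipschitz. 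This is the standard fact cited from \cite{BoyerFabrie2013}, so I take it as given. In particular $|\nabla d|=1$ and $\partial_n d=-1$ on $\partial\Omega$.

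Next choose a nondecreasing function $\chi\in C^\infty([0,\infty))$ with $\chi(t)=t$ for $t\le r_0/3$, $\chi(t)=r_0/2$ for $t\ge 2r_0/3$, and $\chi(t)>0$ for $t>0$. Concretely, take $\chi(t)=\int_0^t\beta$, where $\beta\in C^\infty$, $0\le\beta\le1$, $\beta=1$ on $[0,r_0/3]$, $\beta=0$ on $[2r_0/3,\infty)$, and $\int_0^\infty\beta=r_0/2$. Such a $\beta$ exists since $r_0/3<r_0/2<2r_0/3$. Then set $\rho_\Omega=\chi\circ d$.

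The verification goes region by region.
\begin{enumerate}
\item On $T_{r_0}$, $\rho_\Omega$ is the composition of a smooth function with a $C^{1,1}$ function, hence $C^{1,1}$ by the chain rule, since products and compositions of bounded Lipschitz functions are Lipschitz.
\item On $\{d>2r_0/3\}$, $\rho_\Omega$ is constant.
\item The two regions overlap in an open set, so $\rho_\Omega\in C^{1,1}(\overline\Omega)$. If needed, extend $d$ as a signed distance on a two-sided neighborhood so that $\rho_\Omega$ is the restriction of a $C^{1,1}$ function.
\end{enumerate}

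Positivity in $\Omega$ follows from $d>0$ and $\chi>0$ on $(0,\infty)$, and $\rho_\Omega=\chi(0)=0$ on $\partial\Omega$. Finally, near the boundary $\rho_\Omega=d$, so $\partial_n\rho_\Omega=\nabla d\cdot n=-1$ there.

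The only delicate step is the $C^{1,1}$ regularity of $d$ up to the boundary in the tubular neighborhood. Plain $C^1$ boundary regularity would not give a Lipschitz gradient, which is exactly why the $C^{1,1}$ hypothesis is used. I would cite it rather than reprove it. If a sketch is wanted, the argument is that the interior ball condition gives uniqueness of $\pi$, the Lipschitz normal gives Lipschitz $\pi$ via the identity $x=\pi(x)-d(x)\,n(\pi(x))$, and then $\nabla d=-n\circ\pi$ is Lipschitz.
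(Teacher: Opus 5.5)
Your proof is correct and takes the same route as the paper, which only invokes the $C^{1,1}$ regularity of the signed distance in a tubular neighborhood of $\partial\Omega$ (citing Boyer--Fabrie); your cutoff $\rho_\Omega=\chi\circ d$ simply makes explicit the gluing to a positive constant in the interior.

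One small slip: $d$ need not be $C^{1,1}$ on all of $\overline{T_{r_0}}$ (for a disc of radius $r_0$ it is not differentiable at the center, where $d=r_0$). This does not affect your argument, because $\chi'$ vanishes for $t\ge 2r_0/3$, so you only need the standard regularity statement on $\{d\le r\}$ for some $r\in(2r_0/3,r_0)$.
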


We use this boundary factor in the following argument.
Let $u\in H^2(\Omega)\cap H_0^1(\Omega)$, and let
$\Tr_1=\partial_n$ denote the normal trace operator. By the trace theorem
\cite{Grisvard1985}, there is a bounded right inverse
\[
E_\partial\colon H^{1/2}(\partial\Omega)
\longrightarrow H^2(\Omega)\cap H_0^1(\Omega),
\qquad
\Tr_1E_\partial g=g.
\]
Choose $\phi_k\in C^\infty(\overline\Omega)$ such that
\[
\Tr\phi_k\to -\Tr_1u
\quad\text{in }H^{1/2}(\partial\Omega).
\]
Set $y_k=\rho_\Omega\phi_k$. Clearly, $u-y_k \in H^2(\Omega)\cap H_0^1(\Omega)$. 
In view of the boundary properties of
$\rho_\Omega$ in \eqref{eq:appendix-smooth-factor}, we have
\begin{equation}
\Tr_1y_k=-\Tr\phi_k\to\Tr_1u
\quad\text{in }H^{1/2}(\partial\Omega).
\label{eq:appendix-smooth-trace-convergence}
\end{equation}
Set
\[
r_k=\Tr_1(u-y_k),
\qquad
w_k=E_\partial r_k,
\qquad
z_k=u-y_k-w_k.
\]
Then $w_k\to0$ in $H^2(\Omega)$, and $z_k$ has zero Dirichlet and
normal traces. By the trace theorem, we have
$z_k\in H_0^2(\Omega)$. By Lemma~\ref{lem:rho-H02-inclusion},
$z_k\in Y_\rho$. Since $y_k\in Y_\rho$, we have
$u-w_k=y_k+z_k\in Y_\rho$. From $w_k\to0$ in $H^2(\Omega)$, we know
$u-w_k\to u$ in $H^2(\Omega)$.
Each $u-w_k$ belongs to $Y_\rho$, which is closed in $H^2(\Omega)$,
so $u\in Y_\rho$. Since $u\in H^2(\Omega)\cap H_0^1(\Omega)$ is
arbitrary, this proves \eqref{eq:appendix-reverse-inclusion}.

\subsubsection{Proof of \texorpdfstring{\eqref{eq:appendix-reverse-inclusion}}{(A.1)} for the curvilinear polygonal case}

In this case, the outward normal has different limits along the two incident edges
at each vertex. We therefore use an edgewise normal trace space.
We first recall this space and the corresponding trace theorem.
The edges are understood to be $C^{1,1}$ up to their endpoints,
and the interior angle at each vertex lies in $(0,\pi)$.
Let $V_1,\ldots,V_{N_\Gamma}$ be the vertices and write the open edges as
$\Gamma_j=\gamma_j((0,L_j))$, where $\gamma_j$ is parametrized by
arclength. Let $n_j$ denote the outward unit normal on $\Gamma_j$, and
let $\partial_{n_j}v$ denote the normal derivative of $v$ on $\Gamma_j$.
Define
\[
d_j(\gamma_j(s))=\min\{s,L_j-s\}
\]
and the Lions--Magenes space \cite{LionsMagenes2012} by
\[
H_{00}^{1/2}(\Gamma_j)
=\Big\{v\in H^{1/2}(\Gamma_j):
\int_{\Gamma_j}\frac{|v|^2}{d_j}\,ds<\infty\Big\},
\]
with norm
\[
\|v\|_{H_{00}^{1/2}(\Gamma_j)}^2
=\|v\|_{H^{1/2}(\Gamma_j)}^2
+\int_{\Gamma_j}\frac{|v|^2}{d_j}\,ds.
\]
Define
\[
G_\partial=\Big\{g: g|_{\Gamma_j}\in H_{00}^{1/2}(\Gamma_j), \ j=1,\ldots,N_\Gamma
\Big\},
\qquad
\|g\|_{G_\partial}^2
=\sum_{j=1}^{N_\Gamma}\|g|_{\Gamma_j}\|_{H_{00}^{1/2}(\Gamma_j)}^2.
\]
We use the following standard trace result from
\cite{Grisvard1985}.

\begin{lemma}
\label{prop:standard-polygon-traces}
For a convex $C^{1,1}$ curvilinear polygon, the map $\Tr_1v$, defined edgewise by
\[
(\Tr_1v)|_{\Gamma_j}
=(\partial_{n_j}v)|_{\Gamma_j},\qquad j=1,\ldots,N_\Gamma,
\]
is bounded from $H^2(\Omega)\cap H_0^1(\Omega)$ onto $G_\partial$ and
has a bounded right inverse
\[
E_\partial\colon G_\partial
\longrightarrow H^2(\Omega)\cap H_0^1(\Omega).
\]
If $\Tr_1v=0$, then $v\in H_0^2(\Omega)$. Moreover,
$C_c^\infty(\Gamma_j)$ is dense in
$H_{00}^{1/2}(\Gamma_j)$.
\end{lemma}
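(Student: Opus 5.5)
The statement collects classical facts from \cite{Grisvard1985}. The plan is to reduce everything to local statements by a partition of unity $\{\chi_m\}$ subordinate to a cover of $\overline\Omega$ by three kinds of open sets: interior sets, neighborhoods of points on the open edges $\Gamma_j$ away from the vertices, and small neighborhoods of each vertex $V_k$. Multiplication by a smooth cutoff preserves $H^2(\Omega)\cap H_0^1(\Omega)$ and the spaces $H_{00}^{1/2}(\Gamma_j)$. It therefore suffices to prove boundedness, the right inverse, and the kernel characterization for functions supported in one such set. Interior pieces are trivial. Near a point of an open edge, a $C^{1,1}$ flattening diffeomorphism preserves $H^2$, since the second derivatives of the map are bounded. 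The standard half-plane trace theorem for $(\Tr v,\partial_n v)$ then gives $\partial_{n_j}v\in H^{1/2}$ and a bounded lifting with zero Dirichlet trace. Away from vertices the weight $1/d_j$ is bounded, so $H^{1/2}$ and $H^{1/2}_{00}$ coincide there.

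The vertex analysis is the core, and I expect it to be the main obstacle. Let $V$ be a vertex joining $\Gamma_1$ and $\Gamma_2$ with interior angle $\omega\in(0,\pi)$. First straighten both edges near $V$ by a single $C^{1,1}$ diffeomorphism that maps the curvilinear corner onto a rectilinear sector. Then apply an affine map sending the sector to the quarter plane $\{y_1,y_2>0\}$; this map is invertible precisely because $\omega\neq\pi$. The flattening changes the normals by a Lipschitz factor and $H^{1/2}_{00}$ is stable under such perturbations, but both points need to be checked.

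For $v\in H^2\cap H_0^1$, the vector $\nabla v\in H^1$ has traces in $H^{1/2}$ on each edge. Since $v$ vanishes on the edges, these traces equal $(\partial_{n_j}v)\,n_j$. Grisvard's compatibility condition for traces of an $H^1$ function at a corner requires
\[
\int_0^{\varepsilon}\frac{|(\partial_{n_1}v)(\gamma_1(s))\,n_1-(\partial_{n_2}v)(\gamma_2(s))\,n_2|^2}{s}\,ds<\infty .
\]
Because $n_1$ and $n_2$ are linearly independent, this bound controls each of $\int|\partial_{n_j}v|^2/s\,ds$ separately, which gives $\partial_{n_j}v\in H_{00}^{1/2}$ near $V$ with norm bounds. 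For the converse, take data $g_j\in H_{00}^{1/2}(\Gamma_j)$. On the quarter plane, Grisvard's characterization of the range of $(\Tr,\partial_n)$ on polygons (Theorem 1.5.2.8 type) applies with zero Dirichlet data, and the corner compatibility conditions reduce exactly to the weighted integrability of $g_1,g_2$. This yields a bounded lifting into $H^2\cap H_0^1$. The local liftings are then patched with the partition of unity, and the resulting small commutator errors are absorbed.

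For the kernel, suppose $v\in H^2\cap H^1_0$ and $\Tr_1 v=0$. Then all first derivatives of $v$ have zero trace on every edge, so the extension of $v$ by zero lies in $H^2(\R^2)$. Standard mollification after a slight inward dilation, available because $\Omega$ is convex and hence star-shaped, approximates $v$ in $H^2$ by functions in $C_c^\infty(\Omega)$, so $v\in H_0^2(\Omega)$. Finally, density of $C_c^\infty(\Gamma_j)$ in $H_{00}^{1/2}(\Gamma_j)$ follows from the Lions--Magenes identification $H^{1/2}_{00}(0,L_j)=[H_0^1(0,L_j),L^2(0,L_j)]_{1/2}$. Since $C_c^\infty$ is dense in $H_0^1$, which is the smaller endpoint space, it is dense in the interpolation space.
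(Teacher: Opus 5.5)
Your proposal is correct, but it does something the paper does not do. The paper gives no proof of this lemma: it quotes it as a standard trace result from Grisvard and uses it as a black box in the proof of \eqref{eq:appendix-reverse-inclusion} and of Lemma~\ref{claim:polygon-normal-density}.

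You unpack that citation in three steps:
\begin{itemize}
\item You localize and flatten each vertex to a quarter plane.
\item You then apply Grisvard's rectilinear results: the weighted corner condition for $H^1$ traces, applied to $\nabla v$, and the range characterization of $(\Tr,\partial_n)$ on $H^2$ with zero Dirichlet data.
\item You add self-contained arguments for the kernel (the zero extension lies in $H^2(\R^2)$, followed by dilation toward an interior point of the convex domain and mollification) and for density (the identity $H^{1/2}_{00}=[H_0^1,L^2]_{1/2}$).
\end{itemize}
The citation is shorter. Your route makes explicit that $C^{1,1}$ edges are harmless, which matters because the range theorem you invoke is formulated for straight-sided polygons. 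It also shows exactly where the hypothesis $\omega\in(0,\pi)$ is used: linear independence of $n_1$ and $n_2$ splits the vector-valued corner condition into the two scalar conditions $\int_0^\varepsilon|\partial_{n_j}v|^2s^{-1}\,ds<\infty$.

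Three points can be tightened.
\begin{itemize}
\item The flattening can be done in one step with $\Phi=(\delta^{(1)},\delta^{(2)})$, the signed distances to the extended edges that the paper later uses to build $\rho_\Omega$. Its Jacobian at $V$ has the two inward unit normals as rows, so it is a $C^{1,1}$ diffeomorphism onto a neighborhood of $0$ in the quarter plane. For $w=v\circ\Phi^{-1}$ one has $\partial_{n_j}v=-(\partial_{y_j}w)\circ\Phi$ exactly on $\Gamma_j$. Both the necessity step and the lifting step then take place with constant normals, so no Lipschitz factor needs to be tracked.
\item There are no commutator errors to absorb when patching. Each local lifting $v_m$ vanishes on $\partial\Omega$, so $\Tr_1(\widetilde\chi_m v_m)=\widetilde\chi_m\Tr_1v_m$ exactly.
\item If Grisvard's theorem is used only as an ``onto'' statement, a bounded \emph{linear} right inverse still follows, because $H^2(\Omega)\cap H_0^1(\Omega)$ and $G_\partial$ are Hilbert spaces. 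Restrict $\Tr_1$ to the orthogonal complement of its kernel and apply the open mapping theorem.
\end{itemize}
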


To prove \eqref{eq:appendix-reverse-inclusion}, it suffices to establish the following lemma.

\begin{lemma}
\label{claim:polygon-normal-density}
There exists a function $\rho_\Omega\in C^{1,1}(\overline\Omega)$
such that
\[
\rho_\Omega=0\quad\text{on }\partial\Omega,
\qquad
\rho_\Omega>0\quad\text{in }\Omega,
\]
and
\[
\overline{\left\{\Tr_1(\rho_\Omega\phi):
\phi\in C^\infty(\overline\Omega)\right\}}^{\,G_\partial}
=G_\partial.
\]
\end{lemma}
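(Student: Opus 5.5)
Our plan is to build $\rho_\Omega$ so that, on each open edge $\Gamma_j$ away from the vertices, it behaves like a distance function to $\Gamma_j$ with $\partial_{n_j}\rho_\Omega=-a_j$, where $a_j>0$ is continuous on $\Gamma_j$. Density then comes from the $C_c^\infty(\Gamma_j)$ density in Lemma~\ref{prop:standard-polygon-traces}.

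\emph{Construction.} We take a product of local factors. For each edge, let $\ell_j$ be a $C^{1,1}$ function on a neighborhood of $\overline\Omega$ that vanishes on the full $C^{1,1}$ curve extending $\Gamma_j$, is positive on the side of $\Omega$, and satisfies $|\nabla\ell_j|=1$ on $\Gamma_j$. A signed distance to the extended curve, cut off far from the curve and smoothly capped there to remain positive, serves this purpose. Convexity with interior angles in $(0,\pi)$ gives $\Omega=\bigcap_j\{\ell_j>0\}$ locally near the boundary. This is the step needing most care: we must make each $\ell_j$ strictly positive on $\overline\Omega\setminus\overline{\Gamma_j}$. Near a vertex $V$ on $\Gamma_j\cap\Gamma_{j+1}$, the extension of $\Gamma_j$ leaves $\overline\Omega$ transversally, so $\ell_j$ stays positive near $\Gamma_{j+1}$ away from $V$. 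Away from its own curve, we modify $\ell_j$ via $\min$-free smooth capping (for example $\ell_j\mapsto \chi(\ell_j)$ with $\chi$ smooth, increasing, and equal to the identity near $0$). We then set $\rho_\Omega=\prod_j\ell_j$. As a finite product of $C^{1,1}$ functions it lies in $C^{1,1}(\overline\Omega)$. It vanishes exactly on $\partial\Omega$ and is positive in $\Omega$.

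\emph{Normal trace.} For $\phi\in C^\infty(\overline\Omega)$, $\rho_\Omega\phi$ vanishes on $\Gamma_j$. Hence its gradient there is normal, and
\[
\Tr_1(\rho_\Omega\phi)|_{\Gamma_j}=\phi\,\partial_{n_j}\rho_\Omega=-\phi\,\Bigl(\prod_{k\neq j}\ell_k\Bigr)\Big|_{\Gamma_j}=:-\phi\,a_j ,
\]
where $a_j>0$ on the open edge and $a_j$ is $C^{1,1}$. Now let $g\in G_\partial$ and $\varepsilon>0$. By Lemma~\ref{prop:standard-polygon-traces}, we choose $g_j\in C_c^\infty(\Gamma_j)$ with $\|g|_{\Gamma_j}-g_j\|_{H_{00}^{1/2}(\Gamma_j)}<\varepsilon$. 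Since $\operatorname{supp}g_j$ is a compact subset of the open edge, $a_j$ has a positive lower bound there, so $-g_j/a_j$ is a $C^{1,1}$ function compactly supported in $\Gamma_j$. We extend it to a neighborhood of $\overline\Omega$: near $\operatorname{supp}g_j$ we use a tubular coordinate, taking it constant along normals and multiplying by a cutoff supported away from the other edges and the vertices. Summing over $j$ gives $\widetilde\phi\in C^{1,1}$ with $\Tr_1(\rho_\Omega\widetilde\phi)=\sum_j g_j$ exactly.

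\emph{Smoothing.} The last step replaces $\widetilde\phi$ by $\phi\in C^\infty(\overline\Omega)$. We mollify to get $\phi_m\to\widetilde\phi$ in $C^1$, with the supports of $\phi_m|_{\partial\Omega}$ staying inside a fixed compact subset $S_j$ of each open edge. On such a set the $H_{00}^{1/2}$ norm is dominated by the $H^1(\Gamma_j)$ norm, because the weight $1/d_j$ is bounded on $S_j$. The product $(\phi_m-\widetilde\phi)a_j$ tends to $0$ in $W^{1,\infty}(S_j)$, so $\Tr_1(\rho_\Omega\phi_m)\to\sum_j g_j$ in $G_\partial$. Thus $g$ lies within $2\varepsilon$ of the set $\{\Tr_1(\rho_\Omega\phi)\}$, which proves the density. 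The reverse inclusion is immediate from the boundedness of $\Tr_1$ in Lemma~\ref{prop:standard-polygon-traces}.

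We expect the main obstacle to be the global construction of the $\ell_j$. The difficulty is guaranteeing strict positivity of the product inside $\Omega$ and near the vertices while keeping $C^{1,1}$ regularity. The density argument itself avoids vertices entirely, thanks to the compact support in the open edges.
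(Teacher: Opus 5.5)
Your route is sound, and it differs from the paper's in both the construction and the density step.

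\textbf{Comparison with the paper.} The paper builds $\rho_\Omega$ by a partition of unity: edge signed distances $r_j$, vertex products $\delta_m^{(1)}\delta_m^{(2)}$, and an interior term $\eta_0$. It then analyzes each vertex to show that $a_j=-\partial_{n_j}\rho_\Omega$ is comparable to $d_j$. This yields the multiplier bound $\|a_jv\|_{H_{00}^{1/2}(\Gamma_j)}\le C\|v\|_{H^{1/2}(\Gamma_j)}$, which the paper uses twice: to show $\Tr_1(\rho_\Omega\phi)\in G_\partial$, and in the density step. There $\phi_k$ is chosen only so that $\Tr\phi_k\to h_k$ in $H^{1/2}(\partial\Omega)$, and its trace may reach the vertices.

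You instead take a global product of capped signed distances, the curved analogue of the paper's $\prod_j\ell_j$ for straight-sided polygons. You get membership in $G_\partial$ directly from the trace lemma, since $\rho_\Omega\phi\in H^2(\Omega)\cap H_0^1(\Omega)$. You also keep every approximant supported in compact subsets of the open edges, so $1/d_j$ is bounded there and the vertex behavior of $a_j$ never enters. Your density step therefore works for any $C^{1,1}$ boundary factor with $-\partial_{n_j}\rho_\Omega>0$ on the open edges, including the paper's.

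The cost moves to the global positivity of each $\ell_j$ on $\overline\Omega\setminus\overline{\Gamma_j}$. Besides transversality at the vertices, you need two things:
\begin{itemize}
\item the supporting-line property of the convex domain along $\overline{\Gamma_j}$, namely that $p+t\,n_j(p)$ with $t>0$ lies outside $\overline\Omega$;
\item a cap level below the minimum of the signed distance on the parts of $\overline\Omega$ near the ends of the tube, so that the switch to the constant happens only where $\ell_j$ is already capped.
\end{itemize}

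\textbf{A step that needs repair.} Extending $-g_j/a_j$ constant along normals does not give a $C^{1,1}$, or even $C^1$, function on a $C^{1,1}$ edge. Normal coordinates are only bi-Lipschitz there: the gradient of the arclength coordinate is $\gamma_j'(s)/(1-t\kappa(s))$, which for $t\neq0$ jumps across the normal lines where the curvature $\kappa\in L^\infty$ jumps. So $\widetilde\phi$ is merely Lipschitz, and the claimed convergence $\phi_m\to\widetilde\phi$ in $C^1$ is not available. Any of the following fixes works.
\begin{itemize}
\item Extend in local graph coordinates: write $\Gamma_j=\{x_2=\beta(x_1)\}$ with $\beta\in C^{1,1}$, and take $\widetilde\phi$ independent of $x_2$ times a cutoff equal to one near $\operatorname{supp}g_j$. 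This extension is $C^{1,1}$.
\item Keep the Lipschitz extension. The traces of $\phi_m-\widetilde\phi$ on $S_j$ are bounded in $W^{1,\infty}$ and tend uniformly to zero, so by interpolation they tend to zero in $H^{1/2}(\Gamma_j)$. This suffices after multiplying by the Lipschitz function $a_j$.
\item Follow the paper at this point. Approximate $h$ in $H^{1/2}(\partial\Omega)$ by smooth traces, then multiply by a fixed smooth cutoff that equals one near $\operatorname{supp}h$ and meets $\partial\Omega$ only in compact subsets of the open edges.
\end{itemize}
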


Before proving Lemma~\ref{claim:polygon-normal-density}, we show how it yields
\eqref{eq:appendix-reverse-inclusion}.
Let $\rho_\Omega$ be the boundary factor provided by this lemma,
and let $u\in H^2(\Omega)\cap H_0^1(\Omega)$. Choose
$\phi_k\in C^\infty(\overline\Omega)$ such that
\[
\Tr_1(\rho_\Omega\phi_k)\to\Tr_1u
\quad\text{in }G_\partial.
\]
Set $y_k=\rho_\Omega\phi_k$. Repeating the argument following
\eqref{eq:appendix-smooth-trace-convergence} in
Appendix~\ref{app:c11-density}, with $H^{1/2}(\partial\Omega)$ replaced
by $G_\partial$ and using the bounded right inverse from
Lemma~\ref{prop:standard-polygon-traces}, we obtain $u\in Y_\rho$.
This proves \eqref{eq:appendix-reverse-inclusion}.

\begin{proof}[Proof of Lemma~\ref{claim:polygon-normal-density}]

The construction in Appendix~\ref{app:c11-density} does not apply directly near the vertices, 
where the signed distance to $\partial\Omega$ is not $C^{1,1}$. We therefore use local signed 
distance functions that vanish on their respective edges and have outward normal 
derivative $-1$ there (see \eqref{eq:appendix-smooth-factor}).

Fix a vertex $V_m$ and extend its two incident $C^{1,1}$ edges
slightly beyond $V_m$. Let $\delta_m^{(1)}$ and $\delta_m^{(2)}$
be local signed distance defining functions for the extended edges,
chosen positive on the $\Omega$ side.
We choose $\delta_m^{(1)}\delta_m^{(2)}$ as the local factor near
$V_m$. It is $C^{1,1}$, positive in $\Omega$ near $V_m$, and
vanishes on both incident edges. 
For each $\Gamma_j$, let $U_j$ cover the portion of the edge outside
the vertex neighborhoods, and let $r_j$ be a local signed distance
defining function, positive on the $\Omega$ side.
Add an open interior set $U_0$ whose closure is a compact subset of
$\Omega$, so that these sets cover $\overline\Omega$.
Choose a nonnegative smooth partition of unity
$\{\eta_0,\eta_j,\eta_m^{\mathrm v}\}$ subordinate to this cover, with
each cutoff compactly supported in its corresponding neighborhood and
$\eta_m^{\mathrm v}=1$ near $V_m$. Define
\[
\rho_\Omega
=\eta_0+\sum_{j=1}^{N_\Gamma}\eta_jr_j
+\sum_{m=1}^{N_\Gamma}\eta_m^{\mathrm v}
\delta_m^{(1)}\delta_m^{(2)}.
\]
The terms $\eta_jr_j$ and
$\eta_m^{\mathrm v}\delta_m^{(1)}\delta_m^{(2)}$ are set equal to zero
outside their respective neighborhoods. Since the cutoffs vanish near
the boundaries of these neighborhoods, the extensions remain $C^{1,1}$.
Then we have
\[
\rho_\Omega\in C^{1,1}(\overline\Omega),
\qquad
\rho_\Omega=0\quad\text{on }\partial\Omega,
\qquad
\rho_\Omega>0\quad\text{in }\Omega.
\]

On $\Gamma_j$, set $a_j=-\partial_{n_j}\rho_\Omega$.
The function $a_j$ is Lipschitz and positive in the interior of
$\Gamma_j$. We next examine $a_j$ near each endpoint.
Suppose that $\Gamma_j$ lies on $\delta_m^{(1)}=0$ near an endpoint
$V_m$. Since $\eta_m^{\mathrm v}=1$ there,
\[
a_j
=\bigl(-\partial_{n_j}\delta_m^{(1)}\bigr)\delta_m^{(2)}
=\delta_m^{(2)}
\quad\text{on }\Gamma_j\text{ near }V_m.
\]
Since $\delta_m^{(2)}$ is the signed distance to the extension
of the other incident edge, its gradient at $V_m$ is the unit
normal to that extension, pointing toward the positive side
of $\delta_m^{(2)}$. We measure arclength along $\Gamma_j$
from $V_m$. Since the interior angle at $V_m$ lies in $(0,\pi)$,
the restriction of $\delta_m^{(2)}$ to $\Gamma_j$ has a strictly
positive derivative at $V_m$. Together with
$\delta_m^{(2)}(V_m)=0$, this shows that the restriction is
locally comparable to the arclength distance from $V_m$.
Thus $a_j$ is comparable to $d_j$ near each endpoint.  
We then have $0\leq a_j\leq C d_j$ on $\Gamma_j$. Thus, for
$v\in H^{1/2}(\Gamma_j)$,
\[
\int_{\Gamma_j}\frac{|a_jv|^2}{d_j}\,ds
\leq C\int_{\Gamma_j}d_j|v|^2\,ds
\leq C\|v\|_{L^2(\Gamma_j)}^2.
\]
Since $a_j$ is Lipschitz, multiplication by $a_j$ is bounded on
$H^{1/2}(\Gamma_j)$. Together with the preceding weighted estimate,
we have
\[
\|a_jv\|_{H_{00}^{1/2}(\Gamma_j)}^2
=\|a_jv\|_{H^{1/2}(\Gamma_j)}^2
+\int_{\Gamma_j}\frac{|a_jv|^2}{d_j}\,ds
\leq C\|v\|_{H^{1/2}(\Gamma_j)}^2.
\]
For any $\phi\in C^\infty(\overline\Omega)$, from $\rho_\Omega|_{\Gamma_j}=0$,
we know
\[
\partial_{n_j}(\rho_\Omega\phi)
=-a_j\Tr_{\Gamma_j}\phi \quad\text{on }\Gamma_j.
\]
Taking $v=-\Tr_{\Gamma_j}\phi$ in the preceding estimate, we obtain
$\Tr_1(\rho_\Omega\phi)\in G_\partial$.

To prove density, fix $g\in G_\partial$ and set
$g_j=g|_{\Gamma_j}$, $j=1,\ldots,N_\Gamma$.
For each $j$, choose $g_{j,k}\in C_c^\infty(\Gamma_j)$ such that
\[
g_{j,k}\to g_j
\quad\text{in }H_{00}^{1/2}(\Gamma_j).
\]
Since $a_j$ is Lipschitz and bounded away from zero on
$\operatorname{supp}g_{j,k}$, the function $h_{j,k}=-g_{j,k}/a_j$
belongs to $H^{1/2}(\Gamma_j)$ and has compact support in
$\Gamma_j$. Define $h_k$ on $\partial\Omega$ edgewise by
$h_k|_{\Gamma_j}=h_{j,k}$.
So $h_k\in H^{1/2}(\partial\Omega)$.
By the density of smooth boundary traces, choose
$\phi_k\in C^\infty(\overline\Omega)$ such that
\[
\|\Tr\phi_k-h_k\|_{H^{1/2}(\partial\Omega)}<\frac1k.
\]
Define $g^{(k)}$ on $\partial\Omega$ edgewise by $g^{(k)}|_{\Gamma_j}=g_{j,k}$. Using
$g_{j,k}=-a_jh_{j,k}$, we obtain
\[
\begin{aligned}
&\|\Tr_1(\rho_\Omega\phi_k)-g^{(k)}\|_{G_\partial}^2
=\sum_{j=1}^{N_\Gamma}
\|a_j(\Tr_{\Gamma_j}\phi_k-h_{j,k})\|_{H_{00}^{1/2}(\Gamma_j)}^2\\
&\quad\leq C\sum_{j=1}^{N_\Gamma}
\|\Tr_{\Gamma_j}\phi_k-h_{j,k}\|_{H^{1/2}(\Gamma_j)}^2
\leq C\|\Tr\phi_k-h_k\|_{H^{1/2}(\partial\Omega)}^2
<\frac{C}{k^2}.
\end{aligned}
\]
Since $g^{(k)}\to g$ in $G_\partial$, it follows that
$\Tr_1(\rho_\Omega\phi_k)\to g$ in $G_\partial$.
\end{proof}

\subsection{Convex polygons with straight edges}

For an ordinary convex polygon, the boundary factor in the preceding argument can be chosen as 
\[\rho_\Omega=\prod_{j=1}^{N_\Gamma}\ell_j,\]
where $\ell_j$ is an affine function that vanishes on the $j$th side and
is positive in $\Omega$. Near each vertex, this product consists of the two incident affine factors multiplied by a smooth positive factor, and therefore has the local properties used above. For $\Omega=(-1,1)^2$, this gives
\[
\rho_\Omega(x_1,x_2)=(1-x_1^2)(1-x_2^2).
\]

\section{Sampling and quadrature estimates}
\label{app:uniform-sampling}

We give the proofs of Proposition~\ref{prop:uniform-sampling},
Proposition~\ref{prop:det-midpoint}, and
Proposition~\ref{prop:sampling-selection}.
For $\Theta\in\mathcal K_R$, we introduce the integrands
\begin{align*}
F_\Omega(\Theta,x)
&=\frac1M\sum_{i=1}^M|s_i(W^\Theta)(x)|^2,\\
F_b(\Theta,y)
&=\frac{\lambda_b}{M}\sum_{i=1}^M
|\Tr s_i(W^\Theta)(y)|^2,\\
F_q^\delta(\Theta,z)
&=\frac{\lambda_q}{M}\sum_{i=1}^M
|d_i^\delta(W^\Theta)(z)|^2.
\end{align*}
With this notation, we have
\begin{align*}
\J^\delta(\Theta)
&=\int_\Omega F_\Omega(\Theta,x)\,dx
+\int_{\partial\Omega}F_b(\Theta,y)\,ds
+\int_{\partial\Omega}F_q^\delta(\Theta,z)\,ds,\\
\J_{\mathbf N}^\delta(\Theta)
&=\mathcal I_{\Omega,N_\Omega}[F_\Omega(\Theta,\cdot)]
+\mathcal I_{b,N_b}[F_b(\Theta,\cdot)]
+\mathcal I_{q,N_q}[F_q^\delta(\Theta,\cdot)].
\end{align*}

\medskip

\begin{proof}[\textbf{\textit{Proof of Proposition~\ref{prop:uniform-sampling}}}]
We fix the neural network architecture and $\delta$.

(1) Since the parameter box is compact,
the raw network outputs and their spatial derivatives through order two
are bounded uniformly in $\Theta$. From the constrained representations, we obtain
\begin{align*}
F_\Omega(\Theta,x)
&\leq C_R\Big[1+\sum_{i=1}^M
\bigl(|\nabla F_i(x)|^2+|\nabla B_i(x)|^2\bigr)\Big],\\
F_b(\Theta,y)
&\leq C_R\Big[1+\sum_{i=1}^M
\bigl(|\Tr\nabla F_i(y)|^2+|\Tr\nabla B_i(y)|^2\bigr)\Big],\\
F_q^\delta(\Theta,z)
&\leq C_R\Big[1+\sum_{i=1}^M
\bigl(|\Tr\nabla B_i(z)|^2
+|g_i^\delta(z)|^2\bigr)\Big].
\end{align*}
The bounds on the right are integrable. Indeed, $F_i,B_i\in H^2(\Omega)$,
their first derivatives have $L^2$ boundary traces, and
$g_i^\delta\in\mathcal H^{1/2}(\partial\Omega)\hookrightarrow
L^2(\partial\Omega)$. Moreover, at a fixed spatial point, $F_\Omega$, $F_b$, and
$F_q^\delta$ are continuous in $\Theta$. We may therefore apply the uniform
strong law on a compact parameter space \cite{Jennrich1969} separately to
these three functions. This gives
\begin{align*}
\sup_{\Theta\in\mathcal K_R}
\Big|
\mathcal I_{\Omega,N_\Omega}[F_\Omega(\Theta,\cdot)]
-\int_\Omega F_\Omega(\Theta,x)\,dx
\Big|&\to 0,\\
\sup_{\Theta\in\mathcal K_R}
\Big|
\mathcal I_{b,N_b}[F_b(\Theta,\cdot)]
-\int_{\partial\Omega}F_b(\Theta,y)\,ds
\Big|&\to 0,\\
\sup_{\Theta\in\mathcal K_R}
\Big|
\mathcal I_{q,N_q}[F_q^\delta(\Theta,\cdot)]
-\int_{\partial\Omega}F_q^\delta(\Theta,z)\,ds
\Big|&\to 0
\end{align*}
almost surely as $N_\Omega$, $N_b$, and $N_q$, respectively, tend to infinity.
Since there are only three limits, they hold simultaneously on a common event
of probability one. This proves \eqref{eq:proved-uniform-sampling}.

\medskip

(2) We next establish the expectation and variance estimates for a fixed
$\Theta\in\mathcal K_R$.
Since $x^{(1)},\ldots,x^{(N_\Omega)}$ are uniformly distributed in $\Omega$, we have
\[
\mathbb E[F_\Omega(\Theta,x^{(j)})]
=\frac1{|\Omega|}\int_\Omega F_\Omega(\Theta,x)\,dx,
\qquad
\mathbb E[|F_\Omega(\Theta,x^{(j)})|^2]
=\frac1{|\Omega|}\int_\Omega |F_\Omega(\Theta,x)|^2\,dx.
\]
By the definition of $\mathcal I_{\Omega,N_\Omega}$ and independence of the
interior samples, we obtain
\begin{align*}
&\mathbb E\bigl[\mathcal I_{\Omega,N_\Omega}[F_\Omega(\Theta,\cdot)]\bigr]
=\int_\Omega F_\Omega(\Theta,x)\,dx,\\
&\operatorname{Var}\bigl(\mathcal I_{\Omega,N_\Omega}[F_\Omega(\Theta,\cdot)]\bigr)
=\frac{|\Omega|^2}{N_\Omega^2}
\sum_{j=1}^{N_\Omega}\operatorname{Var}\bigl(F_\Omega(\Theta,x^{(j)})\bigr)\\
&\quad\leq\frac{|\Omega|^2}{N_\Omega^2}
\sum_{j=1}^{N_\Omega}\mathbb E[|F_\Omega(\Theta,x^{(j)})|^2]
=\frac{|\Omega|}{N_\Omega}\|F_\Omega(\Theta,\cdot)\|_{L^2(\Omega)}^2.
\end{align*}
Applying the same argument on the boundary gives
\begin{align*}
\mathbb E\bigl[\mathcal I_{b,N_b}[F_b(\Theta,\cdot)]\bigr]
&=\int_{\partial\Omega}F_b(\Theta,y)\,ds,
&
\mathbb E\bigl[\mathcal I_{q,N_q}[F_q^\delta(\Theta,\cdot)]\bigr]
&=\int_{\partial\Omega}F_q^\delta(\Theta,z)\,ds,\\
\operatorname{Var}\bigl(\mathcal I_{b,N_b}[F_b(\Theta,\cdot)]\bigr)
&\leq\frac{|\partial\Omega|}{N_b}\|F_b(\Theta,\cdot)\|_{L^2(\partial\Omega)}^2,
&
\operatorname{Var}\bigl(\mathcal I_{q,N_q}[F_q^\delta(\Theta,\cdot)]\bigr)
&\leq\frac{|\partial\Omega|}{N_q}\|F_q^\delta(\Theta,\cdot)\|_{L^2(\partial\Omega)}^2.
\end{align*}
The above expectation identities and linearity of expectation prove
the expectation identity in \eqref{eq:mc-fixed-moments}. 

For the variance, the preceding estimates for each term
and the Cauchy--Schwarz inequality give
\begin{align*}
&\quad\operatorname{Var}\bigl(\J_{\mathbf N}^\delta(\Theta)\bigr)
\leq 3\Bigl(
\operatorname{Var}\bigl(\mathcal I_{\Omega,N_\Omega}[F_\Omega(\Theta,\cdot)]\bigr)
+\operatorname{Var}\bigl(\mathcal I_{b,N_b}[F_b(\Theta,\cdot)]\bigr)
+\operatorname{Var}\bigl(\mathcal I_{q,N_q}[F_q^\delta(\Theta,\cdot)]\bigr)
\Bigr)\\
&\leq 3\Bigl(
\frac{|\Omega|}{N_\Omega}\|F_\Omega(\Theta,\cdot)\|_{L^2(\Omega)}^2
+\frac{|\partial\Omega|}{N_b}\|F_b(\Theta,\cdot)\|_{L^2(\partial\Omega)}^2
+\frac{|\partial\Omega|}{N_q}\|F_q^\delta(\Theta,\cdot)\|_{L^2(\partial\Omega)}^2
\Bigr)\\
&\leq \frac{3|\Omega|}{MN_\Omega}\sum_{i=1}^M
\|s_i(W^\Theta)\|_{L^4(\Omega)}^4
+\frac{3\lambda_b^2|\partial\Omega|}{MN_b}\sum_{i=1}^M
\|\Tr s_i(W^\Theta)\|_{L^4(\partial\Omega)}^4+\frac{3\lambda_q^2|\partial\Omega|}{MN_q}\sum_{i=1}^M
\|d_i^\delta(W^\Theta)\|_{L^4(\partial\Omega)}^4.
\end{align*}
We now bound the $L^4$ norms on the right uniformly in $\Theta$.
By the constrained representations and compactness of $\mathcal K_R$, we have
\begin{align*}
s_i(W^\Theta)\in H^1(\Omega;\mathbb R^2),\qquad
\Tr s_i(W^\Theta)\in H^{1/2}(\partial\Omega;\mathbb R^2),\qquad
d_i^\delta(W^\Theta)\in\mathcal H^{1/2}(\partial\Omega),
\end{align*}
with norms bounded uniformly in $\Theta$. The Sobolev embeddings in the
two dimensional domain and on its one dimensional boundary then yield
the required $L^4$ bounds.
Consequently,
\[
\operatorname{Var}\bigl(\J_{\mathbf N}^\delta(\Theta)\bigr)
\leq C_{R,\delta}
\left(N_\Omega^{-1}+N_b^{-1}+N_q^{-1}\right),
\]
which proves the variance bound in \eqref{eq:mc-fixed-moments}.

\medskip

(3) Finally, for fixed sample points, each summand in $\J_{\mathbf N}^\delta$
is continuous in $\Theta$. Thus $\J_{\mathbf N}^\delta$ is continuous on
the compact set $\mathcal K_R$ and attains a minimum.
\end{proof}

\begin{proof}[\textbf{\textit{Proof of Proposition~\ref{prop:det-midpoint}}}]
Under the stated boundary regularity, we may choose the fixed liftings 
$F_i,B_i\in W^{2,\infty}(\Omega)$. Fix $\Theta\in\mathcal K_R$. The $C^2$
regularity of the raw network outputs, together with
$\rho_\Omega\in C^{1,1}(\overline\Omega)$,
$A_b\in C^{0,1}(\overline\Omega)$, and the regularity of the liftings, implies that
$\sigma^\vartheta$, $\nabla u_i^{\theta_i}$, and
$\nabla\psi_i^{\theta_i}$ are Lipschitz in the spatial variable. Hence
$F_\Omega(\Theta,\cdot)$ is Lipschitz in $\Omega$ and
$F_b(\Theta,\cdot)$ is Lipschitz in boundary arclength. Moreover,
$\partial_\tau\psi_i^{\theta_i}$ is Lipschitz in arclength. Together with
$g_i^\delta\in C^{0,1}(\partial\Omega)$, this implies that $F_q^\delta(\Theta,\cdot)$ 
is Lipschitz in arclength. In the curvilinear
polygonal case, each boundary assertion is understood edgewise.

We next show that these spatial Lipschitz bounds are uniform in $\Theta$.
Since the neural network architecture is finite and $\mathcal K_R$ is
compact, we know the network outputs and their spatial derivatives through
order two are uniformly bounded over $\mathcal K_R$. The preceding Lipschitz constants are
therefore bounded by constants $L_\Omega$, $L_b$, and $L_q$, independent
of $\Theta$. For each cell $K\in\mathcal T_\Omega$ with quadrature point $x^{(K)}$,
\[
\left|
\int_K F_\Omega(\Theta,x)\,dx
-|K|F_\Omega(\Theta,x^{(K)})
\right|
\leq L_\Omega|K|\operatorname{diam}(K)\leq L_\Omega|K|h_\Omega.
\]
For boundary intervals with quadrature points $y^{(E)}$ and $z^{(E)}$,
respectively, the analogous estimates are
\[
\begin{aligned}
\left|
\int_E F_b(\Theta,y)\,ds
-|E|F_b(\Theta,y^{(E)})
\right|
&\leq L_b|E|^2\leq L_b|E|h_b,
&&E\in\mathcal E_b,\\
\left|
\int_E F_q^\delta(\Theta,z)\,ds
-|E|F_q^\delta(\Theta,z^{(E)})
\right|
&\leq L_q|E|^2\leq L_q|E|h_q,
&&E\in\mathcal E_q.
\end{aligned}
\]
Summing these local estimates and applying the triangle inequality,
uniformly in $\Theta$, yields
\[
\varepsilon_{\mathrm{sam},\mathbf N}^\delta
\leq L_\Omega|\Omega|h_\Omega
+L_b|\partial\Omega|h_b
+L_q|\partial\Omega|h_q
\leq C(h_\Omega+h_b+h_q).
\]
Applying the assumed bounds on $h_\Omega$, $h_b$, and $h_q$ proves
\eqref{eq:deterministic-sampling-consistency}.

Finally, the sampled loss is continuous in $\Theta$ and therefore attains
a minimum on the compact set $\mathcal K_R$.
\end{proof}

\begin{proof}[\textbf{\textit{Proof of Proposition~\ref{prop:sampling-selection}}}]
We first consider Monte Carlo sampling. For every fixed $n$,
Proposition~\ref{prop:uniform-sampling} gives
\[
\varepsilon_{\mathrm{sam},n,\mathbf N}^{\delta_n}\to0
\qquad\text{in probability as }
\min\{N_\Omega,N_b,N_q\}\to\infty.
\]
For the threshold $n^{-1}$, we can make the probability of exceedance
smaller than $2^{-n}$ by taking all three point counts sufficiently large.
We therefore choose a triple
$\mathbf N_n=(N_{\Omega,n},N_{b,n},N_{q,n})$ such that
\[
\min\{N_{\Omega,n},N_{b,n},N_{q,n}\}\geq n
\qquad\text{and}\qquad
\mathbb P\!\left(
\varepsilon_{\mathrm{sam},n,\mathbf N_n}^{\delta_n}>n^{-1}
\right)\leq2^{-n}.
\]
The preceding bound gives
\[
\sum_{n=1}^\infty
\mathbb P\!\left(
\varepsilon_{\mathrm{sam},n,\mathbf N_n}^{\delta_n}>n^{-1}
\right)
\leq\sum_{n=1}^\infty2^{-n}<\infty.
\]
By the Borel--Cantelli lemma,
\[
\mathbb P\!\left(
\varepsilon_{\mathrm{sam},n,\mathbf N_n}^{\delta_n}>n^{-1}
\ \text{infinitely often}
\right)=0.
\]
Hence, almost surely, $\varepsilon_{\mathrm{sam},n,\mathbf N_n}^{\delta_n}\leq n^{-1}$
for all sufficiently large $n$, and therefore
$\varepsilon_{\mathrm{sam},n,\mathbf N_n}^{\delta_n}\to0$ almost surely.

For deterministic quadrature, we apply Proposition~\ref{prop:det-midpoint}
separately for each $n$. Once the neural class and the data at level $n$
are fixed, the constant $C$ in
\eqref{eq:deterministic-sampling-consistency}, now denoted as $C_n$, is
independent of the point counts. We may therefore choose a triple 
$\mathbf N_n=(N_{\Omega,n},N_{b,n},N_{q,n})$ such that
\[
\min\{N_{\Omega,n},N_{b,n},N_{q,n}\}\geq n
\qquad\text{and}\qquad
C_n\bigl(
N_{\Omega,n}^{-1/2}+N_{b,n}^{-1}+N_{q,n}^{-1}
\bigr)\leq n^{-1}.
\]
We then have $\varepsilon_{\mathrm{sam},n,\mathbf N_n}^{\delta_n}\leq n^{-1}$,
which proves deterministic convergence of the sampling error to zero.
\end{proof}

\section{Construction of the liftings}
\label{app:numerical-details}

In this appendix, we give the formulas used to construct the liftings in
Section~\ref{subsec:numerical-networks}.

\subsection{Harmonic lifting for the potential}
\label{app:num-potential-harmonic}

For boundary data $f$, let $p_f$ be the bilinear corner interpolant
obtained from \eqref{eq:numerical-corner-interpolant} by replacing
$\sigma_b$ with $f$. Subtract $p_f$ from $f$ on each side and denote
the resulting residuals by $r_{f,e}$, $e\in\{L,R,B,T\}$. Since $p_f$ agrees with $f$ at
the four corners, each residual vanishes at both endpoints.
We extend each residual harmonically into the square, with zero boundary
values on the other three sides. Adding these four extensions to $p_f$
gives the harmonic lifting $\mathcal H[f]$. Indeed, $p_f$ is itself
harmonic, and the resulting sum agrees with $f$ on each side. The standard
harmonic lifting is given by
\begin{equation*}
\begin{aligned}
\mathcal H[f](x_1,x_2)={}&p_f(x_1,x_2)\\
&+\sum_{m=1}^{\infty}\sin(m\pi\xi)
 \left[a_{f,B,m}\frac{\sinh(m\pi(1-\zeta))}{\sinh(m\pi)}
      +a_{f,T,m}\frac{\sinh(m\pi\zeta)}{\sinh(m\pi)}\right]\\
&+\sum_{m=1}^{\infty}\sin(m\pi\zeta)
 \left[a_{f,L,m}\frac{\sinh(m\pi(1-\xi))}{\sinh(m\pi)}
      +a_{f,R,m}\frac{\sinh(m\pi\xi)}{\sinh(m\pi)}\right],
\end{aligned}
\end{equation*}
where 
\[
\xi=\frac{x_1+1}{2},\quad \zeta=\frac{x_2+1}{2},\quad \text{and}\quad
a_{f,e,m}=2\int_0^1 r_{f,e}(t)\sin(m\pi t)\,dt.
\]
We set $F_i=\mathcal H[f_i]$ to lift each potential $f_i$
($i=1,2,3,4$).
In the numerical implementation, we truncate each series at $K=1024$
and denote the resulting approximation by $\mathcal H_K[f_i]$.
We approximate the sine coefficients by
\begin{equation*}
a_{f,e,m}\approx\frac{2}{N_{\mathrm{sin}}}
 \sum_{j=1}^{N_{\mathrm{sin}}-1}r_{f,e}(j/N_{\mathrm{sin}})\sin(m\pi j/N_{\mathrm{sin}}),
\qquad N_{\mathrm{sin}}=8192.
\end{equation*}
At interior points, we evaluate $F_i$ using $\mathcal H_K[f_i]$.
On the boundary, we use $f_i$ and its tangential derivative directly
and compute the normal derivatives using the truncated harmonic series.

\subsection{Current interpolation and stream lifting}
\label{app:num-stream-trace}
\label{app:num-harmonic}

The current measurements are available only at the points of $Q$.
To construct the boundary stream trace, we first interpolate these data
along each side. On each side, let $t\in[0,2]$ denote arclength in the counterclockwise
direction. The measured points are $t_k=(k-\tfrac12)\Delta s$,
$k=1,\ldots,n_s$, where $n_s=200$ and $\Delta s=2/n_s$.
For each experiment $i=1,2,3,4$, we define $\ell_{i,\epsilon}$ on each
edge by linear interpolation between the current samples on
$[t_k,t_{k+1}]$. On each of the two endpoint half
intervals, we extend the interpolant constantly using the nearest sample
value. We enforce compatibility through a correction that preserves
the measurements. For this purpose, we introduce the fixed function
\begin{equation*}
\phi(t)=
\begin{cases}
\sin^2\!\left(\frac{\pi(t-t_k)}{\Delta s}\right),
 &t_k\leq t\leq t_{k+1},\quad k=1,\ldots,n_s-1,\\
0,&0\leq t\leq t_1\ \text{or}\ t_{n_s}\leq t\leq2.
\end{cases}
\end{equation*}
This function vanishes at every measured point and at the corners.
We define
\begin{equation*}
\widetilde g_{i,\epsilon}=\ell_{i,\epsilon}+c_i\phi,\quad\text{with}\quad
c_i=-\frac{\int_{\partial\Omega}\ell_{i,\epsilon}\,ds}
          {\int_{\partial\Omega}\phi\,ds}
   =-\frac{\Delta s\sum_{z\in Q}g_{i,\epsilon}(z)}
                {4(n_s-1)\Delta s/2}
   =-\frac{1}{2(n_s-1)}\sum_{z\in Q}g_{i,\epsilon}(z).
\end{equation*}
The resulting current $\widetilde g_{i,\epsilon}$ then has
zero total boundary integral and retains the value $g_{i,\epsilon}$
at every point of $Q$.

Let $\gamma:[0,8]\to\partial\Omega$ be the counterclockwise arclength 
parametrization starting at $x_\star=(-1,-1)$. The boundary
stream trace is
\begin{equation*}
\zeta_{i,\epsilon}(\gamma(s))
   =\int_0^s\widetilde g_{i,\epsilon}(\gamma(t))\,dt.
\end{equation*}
We evaluate this integral from the analytic primitives of the linear
interpolants and squared sine functions. The zero total flux ensures
that $\zeta_{i,\epsilon}$ is a continuous trace on the closed boundary.
We construct the stream lifting $B_{i,\epsilon}$ by applying the procedure
in Appendix~\ref{app:num-potential-harmonic} with $f=\zeta_{i,\epsilon}$.
At interior points, we use $\mathcal H_K[\zeta_{i,\epsilon}]$. On the boundary,
we prescribe $B_{i,\epsilon}=\zeta_{i,\epsilon}$ and
$\partial_\tau B_{i,\epsilon}=\widetilde g_{i,\epsilon}$ directly. The
latter agrees with $g_{i,\epsilon}$ at the measured midpoints. We compute
the normal derivatives on the boundary using the truncated harmonic series.
The data loss uses the original measurements $g_{i,\epsilon}$.

\end{document}